\newtheorem{theorem}{Theorem}
\numberwithin{theorem}{section}
\newtheorem{lemma}[theorem]{Lemma}
\newtheorem*{lemma*}{Lemma}
\newtheorem{corollary}[theorem]{Corollary}
\theoremstyle{definition}
\newtheorem{definition}[theorem]{Definition}
\newtheorem{roadmap}[theorem]{Roadmap}
\newtheorem{example}[theorem]{Example}
\theoremstyle{remark}
\newcommand{\RRR}{\mathcal{R}}
\newcommand{\LLL}{\mathcal{L}}
\newcommand{\ars}{\mathcal{R}^*}
\newcommand{\els}{\mathcal{L}^*}
\newcommand{\ar}{{\mathcal{R}}}
\newcommand{\el}{{\mathcal{L}}}
\newcommand{\leqel}{{\leq_{\mathcal{L}}}}
\newcommand{\leqels}{{\leq_{\mathcal{L}^*}}}
\newcommand{\s}{^{\sharp}}
\title{Semigroups of straight left inverse quotients}
\date{\today}
\keywords{Inverse semigroup, I-order, I-quotients}
\thanks{This work forms part of the PhD of the second author. It was completed whilst she was supported by a Heilbronn Fellowship.}
\subjclass[2021]{Primary: 20M05, 20M10 Secondary: 20F05}
\author[V. Gould]{Victoria Gould}
\address{Department of Mathematics, University of York, Heslington, York, YO10 5DD, UK}
\email{victoria.gould@york.ac.uk}
\author[G. Schneider]{Georgia Schneider}
\address{Department of Mathematics, University of Manchester, Oxford Road, Manchester, M13 9PL, UK}
\email{georgia.schneider@manchester.ac.uk}
\begin{document}

\maketitle

\begin{abstract}
Let $Q$ be an inverse semigroup. A subsemigroup $S$ of $Q$ is a \emph{left I-order} in $Q$
and $Q$ is a \emph{semigroup of left I-quotients} of $S$
if every element in $Q$ can be written as $a^{-1}b$, where $a, b \in S$
and $a^{-1}$ is the inverse of $a$ in the sense of inverse semigroup theory.
If we insist on being able to take $a$ and $b$ to be $\RRR$-related in $Q$
we say that $S$ is \emph{straight} in $Q$ and $Q$
is a \emph{semigroup of straight left I-quotients} of $S$.
We give a set of necessary and sufficient conditions for a semigroup to be a
straight left I-order.
The conditions are in terms of two binary relations,
corresponding to the potential restrictions of $\ar$ and $\el$
from an oversemigroup, and an associated partial order. 
Our approach relies on the meet
structure of the \mbox{$\LLL$-classes} of inverse semigroups.
We prove that every finite left I-order is straight
and  give an example of a left I-order which is not straight.
\end{abstract}

\section{Introduction}\label{sec:intro}

Several definitions of a semigroup of quotients
have been proposed and studied by a number of authors, each with their own purpose. 
The earliest definition
is that of a group of left quotients,
introduced by Dubreil in 1943 \cite{dubreil1943problemes},
building on the work of Ore on rings of left quotients \cite{ore1931linear}. 
A subsemigroup $S$ of a group $G$
is a \textit{left order} in $G$ and
$G$ is a \textit{group of left quotients} of $S$
if every $g \in G$ can be written as $g = a^{-1}b$ for some $a,b \in S$.
Ore and Dubreil showed that a semigroup $S$ has a group of left quotients
if and only if $S$ is cancellative and
satisfies the left Ore condition.
The left Ore condition, also known as {\em right reversibility},
says that for any $a,b\in S$ there exists $c,d\in S$ such that $ca=db$.

In 1950 Murata \cite{murata1950quotient}  extended the notions of groups and rings of left quotients to semigroups. He did this 
  by insisting that  the semigroup of quotients
be a monoid, and then by considering inverses lying in the group of units. To be precise,
a subsemigroup $S$ of a monoid $M$
is a \emph{classical\footnote{The terminology {\em classical} is ours.} left order} in $M$ and
$M$ is a \emph{semigroup of classical left quotients} of $S$
if every $m \in M$ can be written as $m = a^{-1}b$ for some $a,b \in S$,
where $a^{-1}$ is the inverse of $a$ in the group of units of $M$,
and if, in addition, every cancellative element of $S$ lies in the group of units of $M$.
Murata showed that a semigroup $S$ has a monoid of classical left quotients if and only if
$S$ satisfies the left Ore-Asano condition, which is the relevant weakening of the left Ore condition. The condition that cancellative elements of $S$ must acquire an inverse in the group of units of $M$ exactly reflects the corresponding condition in classical ring theory.   

Unlike the case for rings and groups, the identity of a monoid may have little influence on its structure. In part to reflect this, a different definition of semigroup of quotients,  and one which does not insist on the oversemigroup being a monoid, was proposed by Fountain and Petrich in 1986 \cite{fountain1986completely}. It was initially 
restricted to completely 0-simple semigroups of (two-sided) quotients; only in the degenerate case are these monoids. 
The first author  \cite{gould1986bisimple} formally extended this concept to
left orders in an arbitrary semigroup, as follows.
A subsemigroup $S$ of a semigroup $Q$
is a \textit{left Fountain-Gould order} in $Q$ and
$Q$ is a \textit{semigroup of left Fountain-Gould quotients} of $S$
if every $q \in Q$ can be written as $q = a^{\s}b$ for some $a,b \in S$,
where $a^{\s}$ is the inverse of $a$ in some subgroup of $Q$,
and if, in addition, every square-cancellable element of $S$ lies in a subgroup of $Q$.
The notion of square-cancellability is a (strong) necessary condition for an element to lie in a subgroup of an oversemigroup, corresponding to the way in which being cancellable is a (strong) necessary condition to lie in the group of units of an oversemigroup. The characterisations of Fountain-Gould left orders in $Q$  become more complicated than in earlier cases, reflecting the $\ar$- and $\el$-class structure of $Q$
\cite{gould2003semigroups}. 

All of the above approaches use, in one form or another, the notion of inverses in a (sub)group. But, in the theory of inverse semigroups, there is another crucial notion of inverse of an element $a$, which we write as $a^{-1}$. The concept central to this paper makes use of this. Namely,  it is that of a semigroup of left inverse quotients,
which we will shorten to left I-quotients,
first defined by Ghroda and Gould in 2010 \cite{ghroda2010inverse}.

\begin{definition}\label{defn:inverseiquotient}
Let $S$ be a subsemigroup of an inverse semigroup $Q$.
Then $Q$ is a \textit{semigroup of left I-quotients} of $S$
and $S$ is a \textit{left I-order} in $Q$
if every $q \in Q$ can be written as
\begin{equation*}
q = a^{-1}b
\end{equation*}
for some $a,b \in S$.
If in addition we can choose $a,b$ such that $a\,\ar\, b$ in $S$, then we say $Q$ is a \textit{semigroup of straight left I-quotients} of $S$
and $S$ is a straight \textit{left I-order} in $Q$.
\end{definition}

The notion of semigroups of I-quotients
has effectively been used by a number of authors
without employing the above terminology.
The first case of this is probably
Clifford in 1953 \cite{clifford1953class}
where he showed that every right cancellative monoid $S$
with the (LC) condition has a bisimple inverse monoid of left I-quotients.
By saying that a semigroup $S$ satisfies the (LC) condition we mean
for any $a, b \in S$ there exists $c \in S$ such that
$Sa \, \cap \, Sb = Sc$.
Thus, (LC) is a stronger condition than right reversibility.
Left I-quotients have also appeared implicitly in work on inverse hulls
of right cancellative semigroups developed in
\cite{nivat1970generalisation} and
\cite{mcalister1976one},
and taken further in \cite{cherubini1987inverse}.
A related approach recently appeared in Exel and Steinberg's work on inverse hulls of
0-left cancellative semigroups \cite{exel2018representations}.
All of these examples are left ample (or right ample),
and so we can determine the structure of their inverse hulls using
Theorem 3.7 of \cite{ghroda2012semigroups}.
Fountain and Kambites effectively utilise left I-quotients in
Section 2 of \cite{fountain2009graph},
in which they use the fact that  graph products of right cancellative, right reversible, monoids are again right cancellative and right reversible, and then  show they are left I-orders in the inverse hull.

None of the earlier work on left I-orders attempts to describe those semigroups that occur as left I-orders. It transpires that, as for left Fountain-Gould orders, this question is more amenable in the straight case. 
The aim of this article, achieved in Theorem~\ref{main}, is to characterise completely those semigroups that occur as straight left I-orders. Again, we will need conditions reflecting the $\ar$- and $\el$-class configurations of the oversemigroup.

The structure of the paper is as follows.
In Section~\ref{sec:observations} we discuss some connections between
left I-orders and left Fountain-Gould orders and
prove that finite left I-orders are always straight.
We also provide some pertinent examples, namely
an example of a left I-order which is not straight
and an example of a semigroup with multiple semigroups of left I-quotients.
Section~\ref{sec:themainresult} contains 
Theorem~\ref{main}, in which we determine the conditions under which a semigroup $S$ is a
straight left I-order.
In Section~\ref{sec:ample} we use our general result
to provide characterisations of right ample and (two-sided) ample straight left I-orders that sit inside their semigroups of left I-quotients in a way that preserves their additional unary operation(s). The result for ample semigroups is particularly pleasing, and after the exigencies of Theorem~\ref{main} brings us to a condition reminiscent of that of Ore.

We assume familiarity with the basic notions of semigroup theory, 
as found
in \cite{howie1995fundamentals} and \cite{clifford1967algebraic}.
In particular, we make frequent use of  Green's relations and their associated preorders.
To avoid ambiguity, we may use the superscript $Q$ to denote the semigroup in question, 
for example, $a \leq_{\LLL^Q} b$ if and only if $Q^1a \subseteq Q^1b$.

\section{Observations on Left I-orders: straightness and uniqueness}\label{sec:observations}

We will make some brief observations concerning left orders
(of various kinds) in inverse semigroups.
Before we begin we remark that
\cite{schein1996subsemigroups} gives an infinite list of
necessary and sufficient conditions for embeddability of a semigroup $S$ into an inverse semigroup.
These may be regarded as analogous to those of Mal'cev
\cite{malcev1939embedding} for embeddability of a semigroup into a group.
Clearly any left I-order must satisfy the conditions of \cite{schein1996subsemigroups}.
However, our focus here is a particular kind of embedding,
and we will produce a finite list of conditions to check;
admittedly, this list is longer than merely cancellativity and right reversibility,
which is all that is required for a semigroup to have a group of left quotients.

Straightness is not only a very useful property,
but one that appears in some typical examples of  left I-orders. 
We will find in Section \ref{sec:themainresult} that if $S$ is straight, we can determine
equalities and products in $Q$
using equalities and relations between elements of $S$.
This makes straight left I-orders easier to work with than general left I-orders.
Because of this, it is of interest to determine when a left I-order
is straight. The following result is an important tool.

\begin{lemma}\label{IntersectLclass}
Let $S$ be a left I-order in $Q$. Then $S$ is straight in $Q$ if and only if
$S$ intersects every $\LLL$-class of $Q$.
\end{lemma}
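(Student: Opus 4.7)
The proof splits into two directions. The forward implication is a direct calculation: if $q = a^{-1}b$ with $a,b \in S$ and $a\,\ar\,b$ in $Q$, so that $aa^{-1} = bb^{-1}$, then
\[
q^{-1}q = b^{-1}aa^{-1}b = b^{-1}bb^{-1}b = b^{-1}b,
\]
exhibiting $q\,\el\,b$ in $Q$. Since $b \in S$, the $\LLL$-class of $q$ in $Q$ meets $S$.

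For the converse, let $q \in Q$. The plan is to apply the hypothesis twice, choosing $a_0, b \in S$ with $a_0\,\el\,q^{-1}$ and $b\,\el\,q$ in $Q$; equivalently, $a_0^{-1}a_0 = qq^{-1}$ and $b^{-1}b = q^{-1}q$. The auxiliary element $a_0qb^{-1} \in Q$ can then, by the left I-order hypothesis, be written as $a_0qb^{-1} = g^{-1}h$ with $g,h \in S$; combining this with $a_0^{-1}a_0 \cdot q = q = q \cdot b^{-1}b$ gives
\[
q = a_0^{-1}(a_0qb^{-1})b = a_0^{-1}g^{-1}hb = (ga_0)^{-1}(hb),
\]
with $ga_0, hb \in S$. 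To verify $ga_0\,\ar\,hb$ in $Q$ I compute the $\ar$- and $\el$-idempotents of $a_0qb^{-1}$ directly: using $a_0^{-1}a_0 = qq^{-1}$ and $b^{-1}b = q^{-1}q$, they simplify to $a_0a_0^{-1}$ and $bb^{-1}$ respectively. Comparing with $g^{-1}h$ then yields the identities
\[
a_0a_0^{-1} = g^{-1}hh^{-1}g \quad\text{and}\quad bb^{-1} = h^{-1}gg^{-1}h.
\]
Substituting these into $(ga_0)(ga_0)^{-1} = ga_0a_0^{-1}g^{-1}$ and $(hb)(hb)^{-1} = hbb^{-1}h^{-1}$, and invoking commutativity of idempotents in $Q$, both collapse to the common value $gg^{-1}\,hh^{-1}$, giving the required $\ar$-relation.

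The main obstacle I anticipate is identifying the right auxiliary element. Applying the left I-order hypothesis to $a_0q$ or to $qb^{-1}$ individually does yield a representation $q = a'^{-1}b'$ with $a', b' \in S$, but the $\ar$-check then depends on an inequality between idempotents of $Q$ that need not hold. The symmetric combination $a_0qb^{-1}$ is what forces the two $\ar$-idempotents to simplify to the same symmetric expression, and explains why the hypothesis has to be applied to the $\LLL$-classes of both $q$ and $q^{-1}$.
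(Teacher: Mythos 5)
Your proof is correct. The forward direction is identical to the paper's. For the converse, you take a genuinely different route: the paper keeps the original expression $q=a^{-1}b$ supplied by the I-order hypothesis, inserts the idempotent $f=aa^{-1}bb^{-1}$, uses the intersection hypothesis \emph{once} to pick $u\in S$ with $u^{-1}u=f$, and corrects the given decomposition to $q=(ua)^{-1}(ub)$, where both factors are $\RRR^Q$-related to $u$. You instead use the intersection hypothesis \emph{twice}, choosing $a_0,b\in S$ in the $\LLL$-classes of $q^{-1}$ and $q$, then apply the I-order hypothesis afresh to the conjugated element $a_0qb^{-1}=g^{-1}h$ and verify straightness of $q=(ga_0)^{-1}(hb)$ by computing that both $(ga_0)(ga_0)^{-1}$ and $(hb)(hb)^{-1}$ collapse to $gg^{-1}hh^{-1}$; all the idempotent computations you indicate do check out, since $(a_0qb^{-1})(a_0qb^{-1})^{-1}=a_0a_0^{-1}$ and $(a_0qb^{-1})^{-1}(a_0qb^{-1})=bb^{-1}$ under your choices of $a_0$ and $b$. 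The paper's argument is more economical (one use of the hypothesis, applied to an idempotent's $\LLL$-class, and no new I-quotient representation needed), and it records the extra fact that both factors are $\RRR^Q$-related to the single element $u$; your argument costs an additional application of each hypothesis but makes transparent exactly which $\LLL$-classes ($L_q$ and $L_{q^{-1}}$) are being used and identifies the common $\RRR$-idempotent explicitly.
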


\begin{proof}
Let $S$ be straight in $Q$, and let $q = a^{-1}b \in Q$ such that $a,b \in S$ and $a \, \RRR^Q \, b$.
Then
\begin{equation*}
q^{-1}q = b^{-1}aa^{-1}b = b^{-1}bb^{-1}b = b^{-1}b,    
\end{equation*}
and so $b \in S \cap L_q$.

Conversely, suppose $S \cap L_q \neq \, \emptyset$ for all $q \in Q$.
Let $q \in Q$; we know that $q = a^{-1}b$, where $a,b \in S$.
Then
\begin{equation*}
q = a^{-1}aa^{-1}bb^{-1}b = a^{-1}fb,
\end{equation*}
where $f = aa^{-1}bb^{-1} \in E(Q)$.
Since $S$ intersects every $\LLL$-class,
there exists ${u \in S \cap L_f}$, and so $f = u^{-1}u$. Hence
\begin{equation*}
(ua)(ua)^{-1} = uaa^{-1}u^{-1} = ufaa^{-1}u^{-1} = ufu^{-1} = uu^{-1}.
\end{equation*}
Similarly $(ub)(ub)^{-1} = uu^{-1}$.
We can therefore write
\begin{equation*}
q = a^{-1}fb =a^{-1}u^{-1}ub = (ua)^{-1}(ub), 
\end{equation*}
where $ua \, \RRR^Q\, u\, \RRR^Q \, ub$.
It follows that $Q$ is straight over $S$.
\end{proof}

We now briefly compare the different kinds of left orders in inverse semigroups. Observe that if an inverse monoid has trivial group of units, then it contains no proper classical left order. However, if $S$ is a Fountain-Gould left order in an inverse semigroup $Q$
(and many are known to exist, see, for example \cite{fountain1985brandt}),
then $S^1$ is easily seen to be  a Fountain-Gould left order in $Q^1$. On the other hand,
we have the following.

\begin{lemma}\label{fg-i}
Let $S$ be a left Fountain-Gould order  an inverse semigroup $Q$.
Then $S$ is a straight left I-order in $Q$.
\end{lemma}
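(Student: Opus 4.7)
The plan is to verify first that a Fountain--Gould representation in an inverse semigroup is automatically an I-representation, and then to invoke Lemma~\ref{IntersectLclass} by exhibiting, for each $q \in Q$, an explicit element of $S$ lying in $L_q^Q$.

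Take $q \in Q$ and write $q = a^{\s}b$ with $a, b \in S$, where $a$ lies in some subgroup $H_e$ of $Q$ with identity $e$. The first step is to observe that $a^{\s} = a^{-1}$. Indeed, $a^{\s}$ satisfies $aa^{\s}a = ea = a$ and $a^{\s}aa^{\s} = ea^{\s} = a^{\s}$, so it is a semigroup inverse of $a$; uniqueness of inverses in the inverse semigroup $Q$ then forces $a^{\s}=a^{-1}$. In particular, $aa^{-1} = a^{-1}a = e$, and so $q = a^{-1}b$ exhibits $S$ as a left I-order in $Q$.

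For straightness, by Lemma~\ref{IntersectLclass} it suffices to show $S \cap L_q^Q \neq \emptyset$. I claim $ab$ works. Clearly $ab \in S$, and the two computations
\begin{equation*}
a^2 q = a^2 a^{-1}b = a(aa^{-1})b = aeb = ab,
\qquad
a^{-2}(ab) = a^{-1}(a^{-1}a)b = a^{-1}eb = a^{-1}b = q,
\end{equation*}
using $ae = a$ and $ea^{-1}=a^{-1}$ throughout, witness $ab \leq_{\mathcal{L}^Q} q$ and $q \leq_{\mathcal{L}^Q} ab$ respectively. Hence $ab \, \mathcal{L}^Q \, q$ and the intersection is nonempty.

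Applying Lemma~\ref{IntersectLclass} concludes the proof. The only delicate point in the argument is the identification $a^{\s} = a^{-1}$, which rests entirely on the uniqueness of inverses in inverse semigroups; after this, everything is a direct calculation inside the group $\mathcal{H}$-class $H_e$.
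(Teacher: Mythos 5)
Your proposal is correct and follows essentially the same route as the paper: identify $a^{\s}$ with $a^{-1}$ via uniqueness of inverses, exhibit $ab \in S \cap L^Q_q$, and invoke Lemma~\ref{IntersectLclass}. The only cosmetic difference is that you verify $ab \, \LLL^Q \, q$ by explicit multiplications with $a^2$ and $a^{-2}$, where the paper uses the chain $q \,\LLL^Q\, aa^{\s}b = a^{\s}ab \,\LLL^Q\, ab$.
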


\begin{proof}
Let $a \in S$. If $a^{\#}$ exists, then clearly $a^{-1} = a^{\#}$.
We have that every $q \in Q$ can be written as $q = a^{\#}b$ for some $a, b \in S$.
Therefore every $q \in Q$ can be written as $q = a^{-1}b$ for some $a, b \in S$.

We will now prove that $S$ is straight in $Q$
by proving that $S$ intersects every $\LLL$-class of $Q$.
Let $q \in Q$.
We know that $q \in Q$ can be written as $q = a^{\#}b$ for some $a, b \in S$.
We see that
\begin{equation*}
q = a^{\#}b \, \LLL^Q \, aa^{\#}b = a^{\#}ab \, \LLL^Q \, ab.
\end{equation*}
We know that $ab \in S$ and so  $ab \in L_q \cap S$.
Therefore, $S$ intersects every $\LLL$-class of $Q$.
We apply Lemma \ref{IntersectLclass} to obtain that
$S$ is straight in $Q$.
\end{proof}

The converse to Lemma~\ref{fg-i} is not true. 

\begin{example}
Let $\mathcal{B}$ be the bicyclic monoid, which for convenience we write as elements of $\mathbb{N}^0\times \mathbb{N}^0$. Let $S$ be the
$\mathcal{R}$-class of the identity,
${S = \{ \, (0,n) \, | \, n \in \mathbb{N}^0 \, \} }$.
Certainly $\mathcal{B}$ is an inverse semigroup and for any $(a,b) \in \mathcal{B}$,
\begin{equation*}
(a,b) = (a,0)(0,b) = (0,a)^{-1}(0,b)
\end{equation*}
so $\mathcal{B}$ is a semigroup of left I-quotients of $S$.
Since $S$ intersects every $\LLL$-class of $\mathcal{B}$ we have from Lemma \ref{IntersectLclass} that $S$ is straight. 

The monoid $\mathcal{B}$ has trivial subgroups,
and as the only element of $S$ that lies in a subgroup is $(0,0)$,
it is clear that $S$ is not a Fountain-Gould left order in $\mathcal{B}$.
\end{example}

We now show that for a wide class of inverse semigroups, all left I-orders are straight. 

\begin{theorem}\label{thm:css} Let $Q$ be a completely semisimple semigroup with the ascending chain condition on the $\leq_{\mathcal{J}}$-order. Then any left I-order in $Q$ is straight.
\end{theorem}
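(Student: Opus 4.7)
The plan is to use Lemma~\ref{IntersectLclass}, reducing the task to showing that $S$ meets every $\LLL$-class of $Q$. I argue by contradiction: suppose $\Xi := \{ q \in Q : S \cap L_q = \emptyset \}$ is nonempty, and use the ACC on $\leqj$ to pick $q \in \Xi$ whose $\jay$-class is maximal among $\{ J_{q'} : q' \in \Xi\}$. The payoff is that for every idempotent $e$ of $Q$ with $J_e >_{\jay} J_q$ strict, there must exist $u \in S$ with $u^{-1}u = e$; otherwise $L_e$ would be an $\LLL$-class in $\Xi$ with $\jay$-class strictly above $J_q$, contradicting maximality.

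The computational core is a short \emph{left-multiplier trick}: given any expression $r = a'^{-1} b'$ with $a', b' \in S$, if some $u \in S$ satisfies $u^{-1}u = a'a'^{-1}$, then $ub' \in S$ and
\begin{equation*}
(ub')^{-1}(ub') = b'^{-1}(u^{-1}u)b' = b'^{-1}a'a'^{-1}b' = r^{-1}r,
\end{equation*}
so that $ub' \in S \cap L_r$. I apply this trick to a representation $q = a^{-1}b$ with $a, b \in S$. Because $b \in S$, $b \geqel q$ and $b \notin L_q$, complete semisimplicity forces $J_b >_{\jay} J_q$ strict (comparable idempotents in a single $\DDD$-class of a completely semisimple inverse semigroup must coincide). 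If in addition $J_a >_{\jay} J_q$ strict, the maximality hypothesis provides $u \in S$ with $u^{-1}u = aa^{-1}$, and the trick yields $ub \in S \cap L_q$, contradicting $q \in \Xi$.

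The main obstacle is the remaining ``tricky'' case $J_a = J_q$ (equivalently $a \,\LLL\, q^{-1}$); here the idempotent $aa^{-1}$ lies in $J_q$, so maximality does not directly supply the $u$ we need. I plan to overcome this by re-running the trick one level deeper. If $aa^{-1} \notin \Xi$, the argument above already closes, so assume $aa^{-1} \in \Xi$. Pick any expression $aa^{-1} = a'^{-1}b'$ with $a', b' \in S$. As $aa^{-1}$ is self-inverse, both $a' \geqel aa^{-1}$ and $b' \geqel aa^{-1}$; neither can be $\LLL$-equivalent to $aa^{-1}$, for that would place an element of $S$ into $L_{aa^{-1}}$. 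Complete semisimplicity therefore forces $J_{a'}, J_{b'} >_{\jay} J_q$ strict, so maximality supplies $u \in S$ with $u^{-1}u = a'a'^{-1}$, and the left-multiplier trick applied to $aa^{-1} = a'^{-1}b'$ gives $ub' \in S \cap L_{aa^{-1}}$, contradicting $aa^{-1} \in \Xi$. Hence the tricky case cannot occur either; $\Xi$ must be empty, and $S$ is straight in $Q$ by Lemma~\ref{IntersectLclass}.
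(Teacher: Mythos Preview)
Your proof is correct and follows the same overall strategy as the paper: argue by contradiction, use the ACC on $\leq_{\mathcal{J}}$ to find a $\mathcal{J}$-class $J$ maximal among those containing an $\LLL$-class missed by $S$, and then use what you call the ``left-multiplier trick'' (which is exactly the paper's computation $e=a^{-1}b\,\LLL\,aa^{-1}b=c^{-1}cb\,\LLL\,cb$) to produce an element of $S$ in that $\LLL$-class.

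The one difference is that the paper avoids your case split entirely by choosing the witness in the bad $\LLL$-class to be an \emph{idempotent} $e$ from the outset. Since $e=e^{-1}$, the representation $e=a^{-1}b$ automatically also gives $e=b^{-1}a$, so $e\leq_{\LLL}a$; as $a\in S$ cannot lie in $L_e$, complete semisimplicity forces $J_a>_{\mathcal{J}}J_e$ immediately, and your ``tricky case'' $J_a=J_q$ never arises. Your handling of that case is in fact a rediscovery of this idea one level down: you pass from $q$ to the idempotent $aa^{-1}$ (still in $J_q$, still in $\Xi$) and then run exactly the paper's argument on it. So both proofs are the same argument; the paper's choice of an idempotent representative just removes the need for the second pass.
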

\begin{proof}  Let $S$ be a left I-order in $Q$, and suppose that  $S$ is not straight,
so that by Lemma~\ref{IntersectLclass},
$S$ does not intersect every $\LLL$-class of $Q$.
Let $J$ be a $\mathcal{J}^Q$-class  maximal with respect to 
there being an $\LLL^Q$-class $L$ in $ J$ such that $S\cap L=\emptyset$. Let $L=L^Q_e$ where $e=e^2$.  Since $S$ is a left I-order in $Q$, we know that there exists $a,b \in S$
such that
\begin{equation*}
    e = a^{-1}b=b^{-1}a,
\end{equation*}
the second statement following from the fact that 
 $e$ is an idempotent. Now $e\leq_{\LLL^Q} a$ and as 
 $e\neq a^{-1}a$ we have that $e<_{\LLL^Q} a^{-1}a$ and then by the semisimplicity of $Q$ that  $e<_{\mathcal{J}^Q} a^{-1}a$. By assumption, $S$ intersects every $\LLL^Q$-class of $J^Q_{a^{-1}a}=J^Q_a$. In particular, there is a $c\in S$ such that $c^{-1}c=aa^{-1}$ and then
 \[  e=a^{-1}b\,\LLL^Q\, aa^{-1}b=c^{-1}cb\, \LLL^Q\, cb.\]
 But $cb\in S$, a contradiction.

\end{proof} 

\begin{corollary}\label{cor:straight}  Let $Q$ be a finite inverse semigroup.  Then any left I-order in $Q$ is straight.\end{corollary}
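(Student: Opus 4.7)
The plan is to derive the corollary as an immediate consequence of Theorem~\ref{thm:css}, by verifying that a finite inverse semigroup meets both hypotheses of that theorem: complete semisimplicity, and the ascending chain condition on the $\leq_{\mathcal{J}}$-order.

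The ACC hypothesis is trivial: a finite poset cannot admit an infinite strictly ascending chain, so the $\leq_{\mathcal{J}^Q}$-order on the finite set $Q$ vacuously satisfies ACC.

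The less trivial ingredient is complete semisimplicity. Here I would invoke the standard fact that every principal factor of a finite semigroup is either null or completely $0$-simple (respectively completely simple for the minimal ideal); this follows from Munn's theorem, since any finite $0$-simple semigroup is completely $0$-simple. For $Q$ inverse, no principal factor can be null, because every element of an inverse semigroup has an inverse and hence cannot be nilpotent in any Rees factor of $Q$. Therefore every principal factor of $Q$ is completely (0-)simple, which is precisely the definition of complete semisimplicity.

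With both hypotheses in place, Theorem~\ref{thm:css} applies directly, and any left I-order $S$ in $Q$ is straight. The only real obstacle is making sure complete semisimplicity is cited cleanly; if the paper prefers to avoid the Munn's theorem citation, one can argue directly that in a finite inverse semigroup, $\mathcal{D} = \mathcal{J}$ (automatic in the finite case) and $e \,\mathcal{D}\, f$ with $e \leq f$ forces $e = f$, since a strict descending chain of $\mathcal{D}$-equivalent idempotents inside a finite semigroup is impossible — this alternative characterisation of complete semisimplicity is equally short.
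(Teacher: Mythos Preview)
Your proposal is correct and matches the paper's approach: the corollary is stated without proof immediately after Theorem~\ref{thm:css}, so the paper treats it as the direct consequence you describe. Your verification of both hypotheses (ACC from finiteness, complete semisimplicity either via Munn's theorem plus regularity ruling out null factors, or via the $e\,\mathcal{D}\,f,\ e\leq f \Rightarrow e=f$ characterisation) is exactly what is being left to the reader.
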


An obvious question to ask is whether every left I-order is straight.
The answer to this is no, although they do seem to be difficult to find.
The authors are grateful to Mark Kambites for the following  example. 

\begin{example}\label{nonstraight}
Let $X$ be a countably infinite set
and let $Q = \mathcal{I}_X$ be the symmetric inverse monoid on $X$.
Let $S$ be the set of surjective elements of $Q$.
Then $S$ is a left I-order in $Q$ which is not straight.
\end{example}

\begin{proof}
We start by proving that $S$ is a subsemigroup of $Q$.
Let $a,b \in S$ so that $\text{im}(a) = \text{im}(b) = X$.
We have that
\begin{equation*}
\text{im}(ab) = (\text{im}(a) \cap \text{dom}(b))b
= (X \cap \text{dom}(b))b = (\text{dom}(b))b = \text{im}(b) = X.
\end{equation*}
Therefore $ab \in S$ and so $S$ is a subsemigroup of $Q$.

We now prove that $S$ in a left I-order in $Q$.
Let $\gamma \in Q$ with $\text{dom}(\gamma) = A$ and $\text{im}(\gamma) = B$.
Let $C = X \backslash A$ and let $D = X \backslash B$.
Our aim is to find $\alpha, \beta \in S$ such that $\gamma = \alpha^{-1} \beta$.

\textbf{Case 1}: $C$ and $D$ both infinite.

Split $C$ into two disjoint infinite sets, $C = V \, \dot{\cup} \, W$.
We know that $C,D,V,W$ are all countably infinite, so we can construct
two bijections,
$\lambda: W \rightarrow C$ and
$\mu: V \rightarrow D$.

We then define $\alpha: A \, \dot{\cup} \, W \rightarrow X$ by
\begin{equation*}
x \alpha =
\begin{cases}
x &\text{if $x \in A$}\\
x\lambda &\text{if $x \in W$},
\end{cases}
\end{equation*}
and $\beta: A \, \dot{\cup} \, V \rightarrow X$ by
\begin{equation*}
x \beta =
\begin{cases}
x\gamma &\text{if $x \in A$}\\
x\mu &\text{if $x \in V$}.
\end{cases}
\end{equation*}
It is easy to check that $\alpha$ and $\beta$ are surjective and one-to-one.
We will now prove that $\gamma = \alpha^{-1} \beta$.
We see that
\begin{equation*}
\begin{split}
\text{dom}(\alpha^{-1}\beta)
&= (\text{im}(\alpha^{-1}) \cap \text{dom}(\beta))\alpha\\
&= (\text{dom}(\alpha) \cap \text{dom}(\beta))\alpha\\
&= (( A \, \dot{\cup} \, W) \cap ( A \, \dot{\cup} \, V))\alpha\\
&= A\alpha = A = \text{dom}(\gamma),
\end{split}
\end{equation*}
and that for $x \in A$,
we have $x \alpha^{-1}\beta = x \beta = x \gamma$.
Therefore $\gamma = \alpha^{-1} \beta$.

\textbf{Case 2}: $C$ infinite and $D$ finite.

Split $C$ into two disjoint sets, $C = U \, \dot{\cup} \, W$,
with $|U| = |D|$ and $W$ infinite.
We know that $C$ and $W$ are countably infinite, so we can construct
a bijection $\lambda: W \rightarrow C$.
Also since $|U| = |D|$, we can construct a bijection
$\mu: U \rightarrow D$.

We define $\alpha: A \, \dot{\cup} \, W \rightarrow X$ by
\begin{equation*}
x \alpha =
\begin{cases}
x &\text{if $x \in A$}\\
x\lambda &\text{if $x \in W$},
\end{cases}
\end{equation*}
and $\beta: A \, \dot{\cup} \, U \rightarrow X$ by
\begin{equation*}
x \beta =
\begin{cases}
x\gamma &\text{if $x \in A$}\\
x\mu &\text{if $x \in U$}.
\end{cases}
\end{equation*}
In a similar way to the previous case,
$\alpha$ and $\beta$ are surjective with $\gamma = \alpha^{-1} \beta$.

\textbf{Case 3}: $C$ finite and $D$ infinite.

By considering $\gamma^{-1}:B \rightarrow A$, we can use Case 2 to write
$\gamma^{-1} = \alpha^{-1}\beta$ with $\alpha, \beta$ surjective.
Then $\gamma = \beta^{-1}\alpha$.

\textbf{Case 4}: $C$ and $D$ both finite.

Since $C$ is finite, we know that $A = X \backslash C$ is countably infinite.
Therefore we can construct
a bijection $\beta: A \rightarrow X$.
We see that
\begin{equation*}
\text{dom}(\beta\gamma^{-1})
= (\text{im}(\beta) \cap \text{dom}(\gamma^{-1}))\beta^{-1}
= (\text{im}(\beta) \cap \text{im}(\gamma))\beta^{-1}
= B \beta^{-1}.
\end{equation*}
Note that $B \beta^{-1} \subseteq A$, and so $B \beta^{-1}$ is disjoint with $C$.
We define $\alpha: C \, \dot{\cup} \, B \beta^{-1} \rightarrow X$ by
\begin{equation*}
x \alpha =
\begin{cases}
x &\text{if $x \in C$}\\
x(\beta\gamma^{-1}) &\text{if $x \in B \beta^{-1}$}.
\end{cases}
\end{equation*}
We firstly prove that $\alpha$ is surjective.
We see that $\alpha$ is a piecewise composition of two functions.
The first piece has image $C$. We calculate the image of the second piece as follows.
\begin{equation*}
\text{im}(\beta\gamma^{-1})
= (\text{im}(\beta) \cap \text{dom}(\gamma^{-1}))\gamma^{-1}
= (\text{im}(\beta) \cap \text{im}(\gamma))\gamma^{-1}
= B \gamma^{-1} = A.
\end{equation*}
Therefore $\text{im}(\alpha) = A \, \dot{\cup} \, C = X$.
Note that since the domains and images
of the two pieces of $\alpha$ are disjoint
and both pieces are one-to-one, it follows that $\alpha$ is one-to-one.

We will now prove that $\gamma = \alpha^{-1} \beta$.
We see that
\begin{equation*}
\begin{split}
\text{dom}(\alpha^{-1}\beta)
&= (\text{im}(\alpha^{-1}) \cap \text{dom}(\beta))\alpha\\
&= (\text{dom}(\alpha) \cap \text{dom}(\beta))\alpha\\
&= ((C \, \dot{\cup} \, B \beta^{-1}) \cap A)\alpha\\
&= (B \beta^{-1})\alpha = \text{im}(\beta\gamma^{-1}) = A = \text{dom}(\gamma),
\end{split}
\end{equation*}
and that for $x \in A$ we have
 \[x \alpha^{-1}\beta = x(\beta \gamma^{-1})^{-1}\beta=x \gamma \beta^{-1} \beta = x \gamma.\]
Therefore $\gamma = \alpha^{-1} \beta$.

We have now proved that $S$ is a left I-order in $Q$.
We know that $S$ is not straight in $Q$
because it does not intersect every $\LLL$-class of $Q$.
Indeed, although $Q$ has an infinite number of $\LLL$-classes,
one for each subset of $X$,
$S$ is contained completely within the largest one.
\end{proof}

Another natural question to ask is whether a given semigroup has at most one semigroup of left I-quotients. 
The answer to this is no, as we show in the following example.

\begin{example}
Section 4 of \cite{gould1986clifford}
provides an example of a semigroup with two non-isomorphic Clifford semigroups
of left Fountain-Gould quotients.
For $Q$, a Clifford semigroup,
we know that every element lies in a subgroup.
Consequently, for every $a \in Q$, $a^{\#}$ exists and $a^{-1} = a^{\#}$.
We can therefore see that a semigroup $S$ is a left I-order in $Q$ if and only if
$S$ is a left Fountain-Gould order in $Q$.
\end{example}

\section{The general case}\label{sec:themainresult}

The aim of this section is to prove Theorem \ref{main},
which gives necessary and sufficient conditions for
a semigroup $S$ to be a straight left I-order.
Our first lemma follows immediately from the fact that in an inverse semigroup $Q$ we have $Qe\cap Qf=Qef$ for any $e,f\in E(Q)$  so that 
 the poset $Q / \LLL$ is order isomorphic to
the semilattice of idempotents under the natural partial order.

\begin{lemma}\label{Invmeet}
Let $Q$ be an inverse semigroup.
Then $Q / \LLL$ is a meet semilattice with
$L_a \, \wedge \, L_b = L_c$ if and only if $c^{-1}c = a^{-1}a b^{-1}b$.
\end{lemma}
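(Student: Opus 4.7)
The plan is to reduce the claim to the well-known fact that the semilattice of idempotents $E(Q)$ of an inverse semigroup $Q$ is a meet semilattice under its natural partial order, with $e\wedge f=ef$. I would set up an order isomorphism between $(Q/\LLL,\leqel)$ and $E(Q)$ (with its natural order) that sends $L_a$ to $a^{-1}a$, and then transport the meet across.

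First I would verify that $L_a\mapsto a^{-1}a$ is a well-defined bijection. Each $\LLL$-class of $Q$ contains $a^{-1}a$ as an idempotent, since $a\,\LLL\,a^{-1}a$, and it can contain no other: two $\LLL$-related idempotents $e,f$ satisfy $e=fe=ef=f$ by commutativity of idempotents in an inverse semigroup. Hence each $\LLL$-class has a unique idempotent, which gives the bijection with $E(Q)$.

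Next I would show the map is order preserving in both directions. One direction is immediate from the fact highlighted just before the lemma: $Qe\cap Qf=Qef$ tells us that in $Q/\LLL$ the pair $L_e,L_f$ has infimum $L_{ef}$. More hands-on, if $a\leqel b$ then $a=xb$ for some $x\in Q^1$, and a short computation using $bb^{-1}b=b$ gives $a^{-1}a\cdot b^{-1}b=a^{-1}a$, i.e.\ $a^{-1}a\leq b^{-1}b$. Conversely, if $a^{-1}a\leq b^{-1}b$ then
\begin{equation*}
a=a\cdot a^{-1}a=a\cdot (a^{-1}a\cdot b^{-1}b)=(a\cdot b^{-1})b\in Q^1b,
\end{equation*}
so $a\leqel b$.

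With the order isomorphism in hand, $Q/\LLL$ inherits a meet semilattice structure from $E(Q)$, and the meet $L_a\wedge L_b$ corresponds under the isomorphism to $a^{-1}a\cdot b^{-1}b\in E(Q)$; in other words $L_a\wedge L_b=L_{a^{-1}a\,b^{-1}b}$. Translating: $L_a\wedge L_b=L_c$ iff $L_c=L_{a^{-1}a\,b^{-1}b}$, and by uniqueness of the idempotent in each $\LLL$-class this is equivalent to $c^{-1}c=a^{-1}a\,b^{-1}b$. There is no substantial obstacle here; the lemma is essentially an unpacking of the standard correspondence between $\LLL$-classes and idempotents in an inverse semigroup.
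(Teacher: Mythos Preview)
Your proposal is correct and follows essentially the same route as the paper: the paper simply notes (immediately before the lemma) that $Qe\cap Qf=Qef$ for idempotents $e,f$, so that $Q/\LLL$ is order isomorphic to $E(Q)$ under the natural partial order, and declares the lemma to follow. You have carefully unpacked this order isomorphism $L_a\mapsto a^{-1}a$ and transported the meet, which is exactly what the paper leaves implicit.
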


Assume now that $S$ has a semigroup of straight left I-quotients $Q$.
We aim to identify properties of $S$ inherited from $Q$ with the eventual
goal of reconstructing $Q$ from these properties.

By definition, every element in $Q$ can be written as $a^{-1}b$, where $a,b \in S$
and $a \, \RRR^Q \, b$.
Therefore, we can reconstruct $Q$ as ordered pairs of elements of $S$
under an equivalence relation:
\begin{equation*}
Q \cong \{ (a,b) \, | \, a,b \in S, \, a \, \RRR^Q \, b \} / \sim \,
\end{equation*}
where \[(a,b)\sim(c,d)\mbox{ if and only if }a^{-1}b = c^{-1}d\mbox{ in }Q.\]
This relation has already been determined by Ghroda and Gould.
\begin{lemma}{\cite[Lemma 2.7]{ghroda2012semigroups}}\label{GhrodaGould1}
Let $S$ be a straight left I-order in $Q$. Let ${a,b,c,d \in S}$
with $a \, \RRR^Q \, b$ and $c \, \RRR^Q \, d$.
Then $a^{-1}b = c^{-1}d$ in $Q$ if and only if there exists $x,y \in S$ such that
\begin{equation*}
xa=yc, \; xb=yd, \; x \, \RRR^Q \, y, \; x^{-1} \, \RRR^Q \, a
\text{ and } \, y^{-1} \: \RRR^Q \, c.
\end{equation*}
\end{lemma}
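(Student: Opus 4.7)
The lemma is a biconditional, and the two directions split naturally: the converse is a short calculation in $Q$ once the idempotent data are unwrapped, while the forward direction is the substantive part and is where straightness of $S$ in $Q$ is actually used. For the converse, the hypothesis $x^{-1}\,\RRR^Q\,a$ gives $x^{-1}x = aa^{-1}$, and similarly $y^{-1}y = cc^{-1}$. Inserting the idempotent $aa^{-1}$ between $a^{-1}$ and $b$ yields $a^{-1}b = a^{-1}(aa^{-1})b = a^{-1}x^{-1}xb = (xa)^{-1}(xb)$, and likewise $c^{-1}d = (yc)^{-1}(yd)$. The two right-hand sides coincide because $xa = yc$ and $xb = yd$, so $a^{-1}b = c^{-1}d$ in $Q$.

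For the forward direction, set $q := a^{-1}b = c^{-1}d$. The first step is to derive $a\,\LLL^Q\,c$ by computing $qq^{-1}$ in two ways: since $a\,\RRR^Q\,b$ we have $bb^{-1} = aa^{-1}$, giving $qq^{-1} = a^{-1}(bb^{-1})a = a^{-1}a$, and the analogous calculation from $c^{-1}d$ gives $qq^{-1} = c^{-1}c$, whence $a^{-1}a = c^{-1}c$. The key move is then to focus on the element $ca^{-1} \in Q$; short calculations using $a^{-1}a = c^{-1}c$ show that $(ca^{-1})(ca^{-1})^{-1} = cc^{-1}$ and $(ca^{-1})^{-1}(ca^{-1}) = aa^{-1}$. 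Since $S$ is a straight left I-order in $Q$, write $ca^{-1} = u^{-1}v$ with $u, v \in S$ and $u\,\RRR^Q\,v$. A standard inverse-semigroup computation, using $uu^{-1} = vv^{-1}$, gives $u^{-1}u = cc^{-1}$ and $v^{-1}v = aa^{-1}$.

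Setting $x := v$ and $y := u$, three of the five required conditions follow immediately: $x^{-1}\,\RRR^Q\,a$ from $x^{-1}x = aa^{-1}$, $y^{-1}\,\RRR^Q\,c$ from $y^{-1}y = cc^{-1}$, and $x\,\RRR^Q\,y$ from $u\,\RRR^Q\,v$. To verify $xa = yc$ and $xb = yd$, left-multiply the defining equation $u^{-1}v = ca^{-1}$ by $u$, using $uu^{-1} = vv^{-1}$ and $vv^{-1}v = v$, to obtain $v = uca^{-1}$. Then $va = uc(a^{-1}a) = uc(c^{-1}c) = uc$ and, using $cc^{-1} = dd^{-1}$, $vb = uc(a^{-1}b) = uc(c^{-1}d) = u(cc^{-1})d = u(dd^{-1})d = ud$. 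The only genuine obstacle here is spotting that the straight representation should be applied not to $q$ itself but to $ca^{-1}$; once this choice is made, the $\RRR^Q$-relations demanded of $x$ and $y$ are automatically encoded in any straight expression for this element, and the two crucial equations $xa=yc$ and $xb=yd$ drop out by routine manipulations with the trace idempotents.
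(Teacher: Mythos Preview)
Your proof is correct. The paper does not actually prove this lemma; it is quoted verbatim as \cite[Lemma~2.7]{ghroda2012semigroups} and used as a black box, so there is no in-paper argument to compare against. Your approach---applying straightness to $ca^{-1}$ rather than to $q$ itself---is exactly the right idea, and every step checks: in particular, the identifications $u^{-1}u = cc^{-1}$ and $v^{-1}v = aa^{-1}$ follow cleanly from $u\,\RRR^Q\,v$ together with the computations of $(ca^{-1})(ca^{-1})^{-1}$ and $(ca^{-1})^{-1}(ca^{-1})$, and the equalities $va = uc$ and $vb = ud$ then follow as you indicate.
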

However, we wish to be able to express the conditions in Lemma \ref{GhrodaGould1}
entirely in terms of elements of $S$.
We remind the reader that in an inverse semigroup $Q$,
we have that
$x \, \RRR^Q \, y$ if and only if $x^{-1} \, \LLL^Q \, y^{-1}$.

\begin{lemma}\label{box}
Let $Q$ be an inverse semigroup and let ${x,a \in Q}$.
Then
\begin{equation*}
x^{-1} \, \RRR^Q \, a \, \text{ if and only if } \, x \, \RRR^Q \, xa \, \LLL^Q \, a.
\end{equation*}
\end{lemma}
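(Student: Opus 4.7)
The plan is to translate both sides of the biconditional into equations among the idempotents $xx^{-1}$, $x^{-1}x$, $aa^{-1}$, $a^{-1}a$, using the standard characterisation of Green's relations in an inverse semigroup: for $u,v\in Q$ one has $u\,\ar^Q\,v$ iff $uu^{-1}=vv^{-1}$, and $u\,\el^Q\,v$ iff $u^{-1}u=v^{-1}v$. Under this translation the condition $x^{-1}\,\ar^Q\,a$ becomes the single equation $x^{-1}x=aa^{-1}$, and the condition $x\,\ar^Q\,xa\,\el^Q\,a$ becomes the pair of equations
\begin{equation*}
xx^{-1}=xaa^{-1}x^{-1},\qquad a^{-1}a=a^{-1}x^{-1}xa.
\end{equation*}
The whole problem is then an elementary computation in the semilattice $E(Q)$, using that distinct idempotents commute.

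For the forward direction, assuming $x^{-1}x=aa^{-1}$, I would simply substitute: $(xa)(xa)^{-1}=xaa^{-1}x^{-1}=xx^{-1}xx^{-1}=xx^{-1}$ gives $x\,\ar^Q\,xa$, and $(xa)^{-1}(xa)=a^{-1}x^{-1}xa=a^{-1}aa^{-1}a=a^{-1}a$ gives $xa\,\el^Q\,a$.

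For the reverse direction, the task is to extract $x^{-1}x=aa^{-1}$ from the pair of equations above. Left-multiplying $xx^{-1}=xaa^{-1}x^{-1}$ by $x^{-1}$ and right-multiplying by $x$ yields $x^{-1}x=(x^{-1}x)(aa^{-1})(x^{-1}x)$; since $x^{-1}x$ and $aa^{-1}$ are commuting idempotents, this simplifies to $x^{-1}x=(x^{-1}x)(aa^{-1})$, i.e.\ $x^{-1}x\leq aa^{-1}$ in the natural partial order on $E(Q)$. Symmetrically, conjugating $a^{-1}a=a^{-1}x^{-1}xa$ by $a$ and $a^{-1}$ gives $aa^{-1}\leq x^{-1}x$. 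The two inequalities collapse to $x^{-1}x=aa^{-1}$, which is exactly $x^{-1}\,\ar^Q\,a$.

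I do not expect any serious obstacle: the lemma is really just a bookkeeping exercise once one has the dictionary between Green's relations and the idempotents $uu^{-1}$, $u^{-1}u$. The only point requiring a moment's care is the simplification of the "sandwich" expressions like $(x^{-1}x)(aa^{-1})(x^{-1}x)$, which uses commutativity of idempotents in the inverse semigroup $Q$; everything else is direct substitution.
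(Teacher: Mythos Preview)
Your proof is correct. The forward direction is essentially the same as the paper's, just phrased in terms of the idempotents $uu^{-1}$ and $u^{-1}u$ rather than via the left (resp.\ right) compatibility of $\ar^Q$ (resp.\ $\el^Q$).

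The reverse direction, however, is genuinely different from the paper's argument. The paper invokes the eggbox structure: from $xa\in R_x\cap L_a$ it appeals to \cite[Prop.~2.3.7]{howie1995fundamentals} to produce an idempotent $e\in L_x\cap R_a$, and then uses the duality $x\,\el^Q\,e\Rightarrow x^{-1}\,\ar^Q\,e^{-1}=e\,\ar^Q\,a$. Your route stays entirely inside the semilattice $E(Q)$: you conjugate the two equations $xx^{-1}=xaa^{-1}x^{-1}$ and $a^{-1}a=a^{-1}x^{-1}xa$ to obtain the pair of inequalities $x^{-1}x\leq aa^{-1}$ and $aa^{-1}\leq x^{-1}x$, and then collapse them. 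Your argument is more elementary and self-contained, since it avoids the black-box citation to Green's Lemma; the paper's argument is shorter and more conceptual, making visible the $\dee$-class picture that is in some sense the ``reason'' the result holds. Both are perfectly good for this lemma.
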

\begin{proof}
Let $x^{-1} \, \RRR^Q \, a$.
Using the fact that $\RRR^Q$ is a left congruence,
this implies
\begin{equation*}
xa \, \RRR^Q \, xx^{-1} \, \RRR^Q \, x.  
\end{equation*}
We know that $x^{-1} \, \RRR^Q \, a$ implies that $x \, \LLL^Q \, a^{-1}$.
Therefore, using the fact that $\LLL^Q$ is a right congruence,
we also have
\begin{equation*}
xa \, \LLL^Q \, a^{-1}a \, \LLL^Q \, a.  
\end{equation*}

Conversely, let $xa \in R_x \cap L_a$. By \cite[Prop. 2.3.7]{howie1995fundamentals},
we have that $L_x \cap R_a$ contains an idempotent, $e$.
\[
\begin{array}{|l|lll|l|}
\hline
xa &  &  &  & x \\ \hline
   &  &  &  &   \\
   &  &  &  &   \\ \hline
a  &  &  &  & e\\ \hline
\end{array}\] 

Then, as $x \, \LLL^Q \, e$, we have
$x^{-1} \, \RRR^Q \, e^{-1} = e \, \RRR^Q \, a$.
\end{proof}
We now rewrite Lemma \ref{GhrodaGould1} in terms of relations restricted to $S$.
%The next result is an adaptation of\cite[Lemma 2.7]{ghroda2012semigroups}.

\begin{lemma}\label{ERdefn}
Let $S$ be a straight left I-order in $Q$. Let $a,b,c,d \in S$
with $a \, \RRR^Q \, b$ and $c \, \RRR^Q \, d$.
Then $a^{-1}b = c^{-1}d$ if and only if there exists $x,y \in S$ such that
\begin{equation*}
xa=yc, \; xb=yd, \; x \, \RRR^Q \, xa \, \LLL^Q \, a,
\text{ and } \: y \, \RRR^Q \, yc \, \LLL^Q \, c.
\end{equation*}
Note that since $xa=yc$, the conditions imply that
$x \, \RRR^Q \, y$ and $a \, \LLL^Q \, c$.
\end{lemma}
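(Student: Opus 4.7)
The plan is to obtain this as essentially a translation of Lemma \ref{GhrodaGould1} via Lemma \ref{box}, noting that one of the conditions in the former becomes redundant once we reformulate.

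For the forward direction, suppose $a^{-1}b=c^{-1}d$. By Lemma \ref{GhrodaGould1} we obtain $x,y\in S$ with $xa=yc$, $xb=yd$, $x\,\RRR^Q\,y$, $x^{-1}\,\RRR^Q\,a$ and $y^{-1}\,\RRR^Q\,c$. Apply Lemma \ref{box} to each of the last two conditions: $x^{-1}\,\RRR^Q\,a$ becomes $x\,\RRR^Q\,xa\,\LLL^Q\,a$, and $y^{-1}\,\RRR^Q\,c$ becomes $y\,\RRR^Q\,yc\,\LLL^Q\,c$. This gives all the stated conditions (and $x\,\RRR^Q\,y$ is additional information we simply discard).

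For the converse, suppose we have $x,y\in S$ satisfying the conditions of the statement. Lemma \ref{box} applied in the reverse direction yields $x^{-1}\,\RRR^Q\,a$ and $y^{-1}\,\RRR^Q\,c$. The one remaining hypothesis of Lemma \ref{GhrodaGould1} that we need is $x\,\RRR^Q\,y$, and the key observation is that this comes for free: since $x\,\RRR^Q\,xa$ and $yc\,\RRR^Q\,y$, and $xa=yc$, transitivity of $\RRR^Q$ gives $x\,\RRR^Q\,y$. All hypotheses of Lemma \ref{GhrodaGould1} are now in place, so $a^{-1}b=c^{-1}d$.

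The concluding remark is immediate from the same two chains: $x\,\RRR^Q\,xa=yc\,\RRR^Q\,y$ gives $x\,\RRR^Q\,y$, and symmetrically $a\,\LLL^Q\,xa=yc\,\LLL^Q\,c$ gives $a\,\LLL^Q\,c$. There is no real obstacle here; the content of the lemma is not a new theorem but a repackaging of Lemma \ref{GhrodaGould1} into a form whose hypotheses involve only products and Green's relations of elements of $S$ (rather than inverses taken in $Q$), which is exactly what is needed to reconstruct $Q$ intrinsically from $S$ in Theorem \ref{main}.
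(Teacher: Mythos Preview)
Your proposal is correct and matches the paper's approach exactly: the paper presents Lemma~\ref{ERdefn} as a direct rewriting of Lemma~\ref{GhrodaGould1} via Lemma~\ref{box}, without giving a separate proof. Your observation that the condition $x\,\RRR^Q\,y$ becomes redundant (being forced by $x\,\RRR^Q\,xa=yc\,\RRR^Q\,y$) is precisely the point, and is what allows the reformulated conditions to be phrased purely in terms of elements of $S$.
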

The conditions given in Lemma \ref{ERdefn} will determine our $\sim$.

The next thing to address is multiplication on $Q$.
We note that for every $b, c \in S$, $bc^{-1} \in Q$
and therefore, since $Q$ is a semigroup of straight left I-quotients of $S$,
there exists $u,v \in S$ with $u \, \RRR^Q \, v$, such that
$bc^{-1} = u^{-1}v$ in $Q$.
Therefore, multiplication on $Q$ {\em must be } given by
$a^{-1}bc^{-1}d = (ua)^{-1}(vd)$, where
$bc^{-1} = u^{-1}v$ in $Q$.
In the same way as we need to internalise to $S$ the condition that $a^{-1}b=c^{-1}d$ in $Q$,
we need to be able to express the equality $bc^{-1}=u^{-1}v$ solely in terms of elements of $S$. We first quote a result from 
\cite{ghroda2012semigroups}.

\begin{lemma}\cite[Lemma 2.6]{ghroda2012semigroups}\label{GhrodaGould2}
Let $b, c, u, v$ be elements of an inverse semigroup Q such that $u \, \RRR^Q \, v$.
If $bc^{-1} = u^{-1}v$ then $ub = vc$.
\end{lemma}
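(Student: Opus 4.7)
The plan is to manipulate the equation $bc^{-1} = u^{-1}v$ using the $\RRR$-relation assumption, which in an inverse semigroup gives $uu^{-1} = vv^{-1}$, and then to exploit the commutativity of idempotents. The two useful consequences of $u\,\RRR^Q\, v$ are $uu^{-1}v = vv^{-1}v = v$ and $vv^{-1}u = uu^{-1}u = u$, so left-multiplying the hypothesis by $u$ should simplify the right-hand side.

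First I would left-multiply $bc^{-1} = u^{-1}v$ by $u$ to obtain $ubc^{-1} = uu^{-1}v = v$, and then right-multiply by $c$ to get
\begin{equation*}
ubc^{-1}c = vc. \tag{$\ast$}
\end{equation*}
Next, taking inverses of the hypothesis gives $cb^{-1} = v^{-1}u$; left-multiplying by $v$ yields $vcb^{-1} = vv^{-1}u = u$, and then right-multiplying by $b$ gives
\begin{equation*}
vcb^{-1}b = ub. \tag{$\ast\ast$}
\end{equation*}

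Substituting $(\ast)$ into $(\ast\ast)$ produces
\begin{equation*}
ub = (ubc^{-1}c)b^{-1}b = ub\,(c^{-1}c)(b^{-1}b).
\end{equation*}
Since idempotents commute in an inverse semigroup, $(c^{-1}c)(b^{-1}b) = (b^{-1}b)(c^{-1}c)$, so
\begin{equation*}
ub = ub\,(b^{-1}b)(c^{-1}c) = (ubb^{-1}b)c^{-1}c = ubc^{-1}c,
\end{equation*}
using $bb^{-1}b = b$. By $(\ast)$ this equals $vc$, giving $ub = vc$ as required.

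The only potential obstacle is noticing that one application of $u\,\RRR^Q\,v$ (giving $(\ast)$) is not sufficient on its own, since $(\ast)$ only says that $ub$ and $vc$ agree after truncating $ub$ by the idempotent $c^{-1}c$. The symmetric manipulation $(\ast\ast)$ provides the complementary truncation by $b^{-1}b$, and combining the two via idempotent commutativity is what forces equality on the nose.
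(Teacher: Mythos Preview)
Your proof is correct. The paper does not give its own proof of this lemma; it simply quotes it as \cite[Lemma 2.6]{ghroda2012semigroups}, so there is nothing in the present paper to compare against. Your argument is a clean, self-contained verification: the key observation that a single application of $u\,\RRR^Q\,v$ only yields $ubc^{-1}c=vc$, and that the symmetric manipulation via $(bc^{-1})^{-1}=cb^{-1}=v^{-1}u$ is needed to pick up the complementary idempotent $b^{-1}b$, is exactly right, and the finish via commuting idempotents is standard.
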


\begin{lemma}\label{ORE}
Let $Q$ be an inverse semigroup and let $b,c,u,v \in Q$ such that $u \, \RRR^Q \, v$.
Then $bc^{-1} = u^{-1}v$ in $Q$ if and only if
\begin{equation*}
ub = vc, \; v \, \RRR^Q \, vc \, \text{ and }
\, L^Q_b \, \wedge \, L^Q_c = L^Q_{ub}.  
\end{equation*}
\end{lemma}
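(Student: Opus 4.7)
The plan is to prove the biconditional one direction at a time, using as main tools Lemma~\ref{GhrodaGould2}, Lemma~\ref{Invmeet}, the commutativity of idempotents, and the identities $u \,\RRR^Q\, v \Leftrightarrow uu^{-1}=vv^{-1}$ and $x \,\RRR^Q\, xc \Leftrightarrow x=xcc^{-1}$ available in an inverse semigroup.

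For the forward direction, assume $bc^{-1}=u^{-1}v$. The equality $ub=vc$ is exactly Lemma~\ref{GhrodaGould2}. For $v \,\RRR^Q\, vc$, I would use $u \,\RRR^Q\, v$ to write
\begin{equation*}
v = vv^{-1}v = uu^{-1}v = u(u^{-1}v) = u(bc^{-1}) = ubc^{-1},
\end{equation*}
whence $vcc^{-1} = ubc^{-1}cc^{-1} = ubc^{-1} = v$, giving $v \,\RRR^Q\, vc$. For the meet, I would compute $(u^{-1}v)(u^{-1}v)^{-1} = u^{-1}vv^{-1}u$ and use $vv^{-1}=uu^{-1}$ to collapse this to $u^{-1}u$, while $(bc^{-1})(bc^{-1})^{-1} = bc^{-1}cb^{-1}$; equating these gives the key identity $u^{-1}u = bc^{-1}cb^{-1}$. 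Sandwiching by $b^{-1}$ on the left and $b$ on the right and commuting idempotents yields $(ub)^{-1}(ub) = b^{-1}u^{-1}ub = b^{-1}bc^{-1}c$, which by Lemma~\ref{Invmeet} is exactly $L^Q_{ub}=L^Q_b\wedge L^Q_c$.

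For the converse, assume the three conditions. Using $v=vcc^{-1}$ (the equivalent form of $v \,\RRR^Q\, vc$) and $vc=ub$, I would compute
\begin{equation*}
u^{-1}v = u^{-1}(vcc^{-1}) = (u^{-1}vc)c^{-1} = (u^{-1}ub)c^{-1} = u^{-1}u\cdot bc^{-1}.
\end{equation*}
It therefore suffices to show that the idempotent $u^{-1}u$ is absorbed by $bc^{-1}$, i.e.\ that $u^{-1}u \cdot bc^{-1}=bc^{-1}$. I would argue via the simple fact that for idempotents $e\geq f$ and any $x$ with $x \,\RRR^Q\, f$ one has $ex=x$; applied with $e=u^{-1}u$, $f=bc^{-1}cb^{-1}=(bc^{-1})(bc^{-1})^{-1}$ and $x=bc^{-1}$, the problem reduces to the inequality $bc^{-1}cb^{-1}\leq u^{-1}u$. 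The meet hypothesis together with Lemma~\ref{Invmeet} supplies $b^{-1}u^{-1}ub = b^{-1}bc^{-1}c$; left-multiplying by $b$, right-multiplying by $b^{-1}$, and commuting idempotents turns the left hand side into $u^{-1}u\cdot bb^{-1}$ and the right hand side into $bc^{-1}cb^{-1}$, from which the desired inequality is immediate.

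The routine work should go through cleanly; the main obstacle is the idempotent bookkeeping, and in particular the duality between the two directions. In the forward direction one extracts the \emph{equality} $u^{-1}u=bc^{-1}cb^{-1}$ and uses it to identify the $\LLL$-meet, whereas in the converse one must recover only the corresponding \emph{inequality} $bc^{-1}cb^{-1}\leq u^{-1}u$ from the weaker meet hypothesis, then finish with the idempotent-absorption trick. Both steps rely essentially on the commutativity of $E(Q)$ and on the identification of $Q/\LLL$ with the semilattice $E(Q)$ recorded in Lemma~\ref{Invmeet}.
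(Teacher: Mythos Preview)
Your proof is correct and follows essentially the same approach as the paper's. The only notable difference is in the converse: the paper, after reaching $u^{-1}v=u^{-1}ubc^{-1}$, inserts $bb^{-1}$ and substitutes the meet hypothesis $(ub)^{-1}(ub)=b^{-1}bc^{-1}c$ directly to obtain $bb^{-1}u^{-1}ubc^{-1}=b(ub)^{-1}(ub)c^{-1}=bb^{-1}bc^{-1}cc^{-1}=bc^{-1}$, whereas you take the slightly longer route of conjugating the meet hypothesis to an inequality $bc^{-1}cb^{-1}\leq u^{-1}u$ and then invoking idempotent absorption; both arguments use the same ingredients and the paper's substitution is marginally more direct.
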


\begin{proof}
Let $bc^{-1} = u^{-1}v$.
By Lemma \ref{GhrodaGould2}, we have $ub = vc$.
Since $u \, \RRR^Q \, v$, we know that $uu^{-1} = vv^{-1}$. Therefore,
using $u^{-1}v = bc^{-1}$ and $ub = vc$, we have
\begin{equation*}
v = vv^{-1}v = uu^{-1}v = ubc^{-1} = vcc^{-1}
\end{equation*}
and therefore $v \, \RRR^Q \, vc$.
Finally, again using $bc^{-1} = u^{-1}v$ and $ub = vc$,
we have
\begin{equation*}
b^{-1}bc^{-1}c = b^{-1}u^{-1}vc = b^{-1}u^{-1}ub = (ub)^{-1}(ub). 
\end{equation*}
It follows from  Lemma \ref{Invmeet}  that
$L^Q_b \, \wedge \, L^Q_c = L^Q_{ub}$.

Conversely, let
\begin{equation*}
ub = vc, \; v \, \RRR^Q \, vc \, \text{ and }
\, L^Q_b \, \wedge \, L^Q_c = L^Q_{ub}.  
\end{equation*}
Since $Q$ is an inverse semigroup $v \, \RRR^Q \, vc$
implies that $vv^{-1} = vcc^{-1}v^{-1}$,
and so $v = vcc^{-1}$.
Using this along with $vc = ub$, we have
\begin{equation}\label{uv}
u^{-1}v = u^{-1}vcc^{-1} = u^{-1}ubc^{-1}
= u^{-1}ubb^{-1}bc^{-1}
=bb^{-1}u^{-1}ubc^{-1},
\end{equation}
using the fact that idempotents commute
in the last equality.
By Lemma \ref{Invmeet}, we know that
$\, L^Q_b \, \wedge \, L^Q_c = L^Q_{ub} \,$ implies that
$\, {b^{-1}bc^{-1}c = (ub)^{-1}(ub)}$.
Therefore
\begin{equation}\label{bc}
bb^{-1}u^{-1}ubc^{-1} = b(ub)^{-1}(ub)c^{-1}
= bb^{-1}bc^{-1}cc^{-1} = bc^{-1}.
\end{equation}
Putting Equations (\ref{uv}) and (\ref{bc}) together,
we obtain $u^{-1}v = bc^{-1}$.
\end{proof}

We now introduce the notation used
in Theorem \ref{main}. 
The relation $\RRR^*$ is defined on a semigroup $S$
by the rule that $a \, \RRR^* \, b$
if and only $a \, \RRR \, b$ in some oversemigroup of $S$.
According to Lemma 1.7 of \cite{mcalister1976one},
$a \, \RRR^* \, b$ is equivalent to the condition that
$xa = ya$ if and only if $xb = yb$ for all $x, y \in S^1$.
By considering the right regular embedding of $S$ as a subsemigroup of $\mathcal{T}_{S^1}$, it is easy to see that $a \, \RRR^* \, b$ if and only if $a\,\RRR\, b$ in $\mathcal{T}_{S^1}$.
The relation $\RRR^*$ will always refer to $S$. For a given preorder $\, \leq_l$ on $S$
we will use $\LLL'$ to denote the associated equivalence relation
and  $L'_a$ to denote the $\LLL'$-class of $a$. We use 
 $\wedge$ to denote the meet of $\LLL'$-classes
with respect to the partial order induced by  $\leq_l$, where these meets exist.
We state our theorem making use of  Greek letters, which will aid in applicability and reference,  when we will use Roman letters for fixed elements of our semigroup.

\begin{theorem}\label{main}
Let $S$ be a semigroup and let $\RRR'$ and $\, \leq_l$ be binary relations on $S$.
Then $S$ has a semigroup of straight left I-quotients
$Q$ such that \[{\RRR^Q \cap (S \times S) = \RRR'}\mbox{ and }
\leq_{\LLL^Q} \cap \, (S \times S) = \, \leq_l\]
if and only if $\RRR'$ is a left compatible equivalence relation;
$\, \leq_l$ is a preorder
such that the $\LLL'$-classes form a meet semilattice under the associated partial order;
and $S$ satisfies Conditions \mbox{(M1) - (M6)}.

\begin{itemize}
\item[(M1)] For all $\alpha, \beta \in S$,
there exists $\gamma, \delta \in S$ such that
\begin{equation*}
\gamma \, \RRR' \, \delta \, \RRR' \, \delta \beta = \gamma \alpha \, \text{ and } \,
L'_\alpha \, \wedge \, L'_\beta = L'_{\gamma \alpha}.    
\end{equation*}
\item[(M2)] Right multiplication distributes over meet, that is,
for all $\alpha, \beta, \gamma, \delta \in S$,
\begin{equation*}
{L'_\alpha \, \wedge \, L'_\beta = L'_\gamma} \, \text{ implies that } \,
L'_{\alpha\delta} \, \wedge \, L'_{\beta\delta} = L'_{\gamma\delta}.
\end{equation*}
\item[(M3)] For all $\alpha, \beta \in S$,
$\alpha \beta \leq_l \beta$.
\item[(M4)] $\RRR' \subseteq \RRR^*$.
\item[(M5)] Let $\alpha, \beta, \gamma, \delta \in S$ such that
$\gamma \, \RRR' \, \gamma \alpha \, \LLL' \, \alpha$ and
$\delta \, \RRR' \, \delta \beta \, \LLL' \, \beta$.
Then $\gamma \, \LLL' \, \delta$ if and only if $\alpha \, \RRR' \, \beta$.
\item[(M6)] For all $\alpha, \beta, \gamma \in S$,
$\alpha \, \LLL' \, \beta \, \LLL' \, \gamma \alpha = \gamma \beta$
implies that $\alpha = \beta$.
\end{itemize}
\end{theorem}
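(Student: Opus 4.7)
The proof will be a biconditional with necessity relatively mechanical and sufficiency requiring a full construction of $Q$ from $S$ together with the data $\RRR'$ and $\leq_l$. For necessity, I would assume $S$ is a straight left I-order in $Q$ with the claimed restrictions of Green's relations. Then $\RRR'$ inherits being a left compatible equivalence from $\RRR^Q$, and the meet semilattice structure on $\LLL'$-classes comes from Lemma~\ref{Invmeet} together with straightness (Lemma~\ref{IntersectLclass}), which guarantees that $\LLL^Q$-classes correspond bijectively to $\LLL'$-classes via intersection with $S$. Condition (M1) is exactly Lemma~\ref{ORE} applied to the element $\alpha\beta^{-1}\in Q$, which by straightness may be written as $\gamma^{-1}\delta$ with $\gamma\,\RRR^Q\,\delta$ in $S$. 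Conditions (M2), (M3), (M4) follow respectively from $\LLL^Q$ being a right congruence combined with Lemma~\ref{Invmeet}, from the basic $\leq_\LLL$-order, and from the general fact that any oversemigroup relation $\RRR$ restricts to within $\RRR^*$. Condition (M5) is Lemma~\ref{box} applied to $\gamma$ and $\delta$, using that $\RRR^Q$-relatedness on $\gamma^{-1},\delta^{-1}$ is $\LLL^Q$-relatedness on $\gamma,\delta$. For (M6), $\alpha\,\LLL^Q\,\beta$ gives $\alpha^{-1}\alpha=\beta^{-1}\beta$, and $\alpha\,\LLL^Q\,\gamma\alpha$ yields $\gamma^{-1}\gamma\alpha=\alpha$; multiplying $\gamma\alpha=\gamma\beta$ on the left by $\gamma^{-1}$ then forces $\alpha=\beta$.

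For sufficiency, I will construct $Q$ explicitly. Let $\Sigma=\{(a,b)\in S\times S:a\,\RRR'\,b\}$, and define $(a,b)\sim(c,d)$ exactly as in Lemma~\ref{ERdefn}: there exist $x,y\in S$ with $xa=yc$, $xb=yd$, $x\,\RRR'\,xa\,\LLL'\,a$ and $y\,\RRR'\,yc\,\LLL'\,c$. Reflexivity needs one preliminary observation: applying (M1) to $\alpha=\beta=a$ produces $\gamma$ with $\gamma\,\RRR'\,\gamma a\,\LLL'\,a$, so such witnesses $x$ always exist. Symmetry is immediate. For transitivity, given $(a,b)\sim(c,d)$ via $x,y$ and $(c,d)\sim(e,f)$ via $u,v$, I would apply (M1) to $y$ and $u$ to obtain $\gamma,\delta$ with $\gamma\,\RRR'\,\delta\,\RRR'\,\delta u=\gamma y$ and $L'_y\wedge L'_u=L'_{\gamma y}$, then use $r=\gamma x$ and $s=\delta v$ as the new witnesses. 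The required equalities $ra=se$ and $rb=sf$ follow by direct substitution; the structural conditions use left compatibility of $\RRR'$ (for $r\,\RRR'\,ra$), and right multiplication through the meet (M2) to see $L'_{\gamma yc}=L'_{yc}\wedge L'_{uc}=L'_c$, giving $ra\,\LLL'\,a$.

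Multiplication on $Q=\Sigma/{\sim}$ is defined by $[a,b][c,d]=[\gamma a,\delta d]$, where $\gamma,\delta$ are supplied by (M1) applied to the pair $b,c$; this is precisely the recipe forced by Lemma~\ref{ORE}. I would then verify in turn: well-definedness in the choice of $\gamma,\delta$ (different solutions of (M1) give $\sim$-equivalent outputs, using (M2) to compare meets and the dual of (M1) for a common refinement); well-definedness in the $\sim$-class of each factor (pass the $x,y$-witnesses for the first factor and $u,v$-witnesses for the second through (M1) in parallel); and associativity (both bracketings reduce to a common refinement built from three successive applications of (M1) plus (M2)). Then $[a,b]^{-1}=[b,a]$ is a genuine inverse, idempotents $[a,b]$ are exactly those with $a=b$, and these commute because the idempotent $[a,a]$ is determined by the $\LLL'$-class of $a$ thanks to (M5) and (M6), and meets in $S/\LLL'$ are commutative. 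The embedding $S\hookrightarrow Q$ sends $a$ to $[p,pa]$ for any $p$ with $p\,\RRR'\,pa\,\LLL'\,a$; independence of $p$ is a direct $\sim$-verification, injectivity uses (M6), and homomorphism uses (M1) once more. Finally I would read off that the restrictions of $\RRR^Q$ and $\leq_{\LLL^Q}$ to $S$ are $\RRR'$ and $\leq_l$; here (M4) is exactly what prevents $\RRR^Q$ from collapsing elements that $\RRR'$ distinguishes.

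The main obstacle will be the tangle of well-definedness checks for multiplication: there are three independent things varying (the representatives of each factor and the choice of witnesses from (M1)), and at each stage one must manipulate meets of $\LLL'$-classes through left and right multiplications, invoking (M2) to push meets through right multiplications and (M1) to produce suitable common refinements on the left. A secondary subtlety is verifying that idempotents commute without circularity: this needs (M5) to identify the idempotent $\sim$-class with the $\LLL'$-class of either coordinate, after which commutativity reduces to the given meet semilattice structure on $S/\LLL'$. The embedding step and the identification of restricted Green's relations, by contrast, should fall out cleanly once the construction of $Q$ is in place.
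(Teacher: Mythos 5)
Your proposal follows the paper's own route essentially verbatim: necessity via the same lemmas (the meet description of $Q/\LLL$, Lemma~\ref{ORE} applied to $\alpha\beta^{-1}$, Lemma~\ref{box} for (M5)), and sufficiency via the same quotient construction $\Sigma/\sim$ with multiplication dictated by (M1), the same embedding $a\mapsto[x,xa]$, and the same sequence of verifications (well-definedness, associativity, inverse structure, restrictions of $\RRR^Q$ and $\leq_{\LLL^Q}$). Two cosmetic slips do not affect the soundness of the plan: in the construction (M4) is what transfers equalities such as $wua=zsa$ to $wub=zsb$ throughout the well-definedness and regularity checks (rather than merely governing the restriction of $\RRR^Q$ at the end, which in the paper rests on (M5)), and no ``dual of (M1)'' is needed --- (M1) itself, applied to the two candidate outputs, supplies the common refinement.
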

We start the proof of Theorem \ref{main} by first proving the forward implication.
We assume that $S$ has a semigroup of straight left I-quotients, $Q$,
and we put ${\RRR^Q \cap (S \times S) = \RRR'}$,
$\, {\LLL^Q \cap (S \times S) = \LLL'}$ and
$\, \leq_{\LLL^Q} \, \cap \, (S \times S) = \, \leq_l \,$.
From knowledge of Green's relations,
we know that $\RRR'$ is a left congruence on $S$,
and that
$\leq_l$ is a preorder on $S$
with the associated equivalence
relation, $\LLL'$.
Using Lemma \ref{Invmeet}, we know that
$Q / \LLL^Q$ forms a meet semilattice under $\leq_{\LLL^Q}$.
Since $S$ intersects every $\LLL^Q$-class, this means that
$S / \LLL'$ forms a meet semilattice under $\leq_{l}$.
We now prove that Properties (M1) - (M6) hold.

~\begin{itemize}
\item[(M1)] Let $\alpha,\beta \in S$. Then $\alpha,\beta \in Q$ and so,
by closure under taking of inverses and multiplication, $\alpha \beta^{-1} \in Q$.
Since $Q$ is a semigroup of straight left \mbox{I-quotients} of $S$, there exists
$\gamma, \delta \in S$ such that $\alpha \beta^{-1} = \gamma^{-1}\delta$ with
$\gamma \, \RRR' \, \delta$.
Lemma \ref{ORE} then gives the result.
\item[(M2)] Since $Q$ is an inverse semigroup, we can use
Lemma \ref{Invmeet} to give us that
$\, L'_\alpha \, \wedge \, L'_\beta = L'_\gamma \,$ is equivalent to
$\, \alpha^{-1}\alpha \beta^{-1}\beta = \gamma^{-1}\gamma$.
Therefore
\begin{equation*}
\begin{split}
(\alpha\delta)^{-1}(\alpha\delta)(\beta\delta)^{-1}(\beta\delta)
& = \delta^{-1} \alpha^{-1}\alpha \delta \delta^{-1} \beta^{-1}\beta \delta\\
& = \delta^{-1} \alpha^{-1}\alpha \beta^{-1}\beta \delta\\
& = \delta^{-1} \gamma^{-1}\gamma \delta\\
& = (\gamma\delta)^{-1}(\gamma\delta).
\end{split}
\end{equation*}
And so, using Lemma \ref{Invmeet} again,
$L'_{\alpha\delta} \, \wedge \, L'_{\beta\delta} = L'_{\gamma\delta}$.
\item[(M3)] This is true in any semigroup, since $Q^1 \alpha \beta \subseteq Q^1 \beta$.
\item[(M4)] Since $\RRR' = \RRR^Q \cap (S \times S)$
and $Q$ is an oversemigroup of $S$,
then, by definition, 
$\alpha \, \RRR' \, \beta$ implies that $\alpha \, \RRR^* \, \beta$.
\item[(M5)] By Lemma \ref{box}, in an inverse semigroup we have that
$\gamma \, \RRR' \,  \gamma \alpha \, \LLL' \,  \alpha$
implies that $\gamma^{-1} \, \RRR^Q \, \alpha$, and similarly
$\delta \, \RRR' \,  \delta \beta \, \LLL' \, \beta$
implies that $\delta^{-1} \, \RRR^Q \, \beta$.
Then $\alpha \, \RRR' \, \beta$ implies that
$\gamma^{-1} \, \RRR^Q \, \alpha \, \RRR^Q \, \beta \, \RRR^Q \, \delta^{-1}$.
We know that $\gamma^{-1} \, \RRR^Q \, \delta^{-1}$ implies that
$\gamma \, \LLL^Q \, \delta$, and so $\gamma \, \LLL' \, \delta$.
The converse is similar.
\item[(M6)] Since $\alpha$ and $\gamma$ are elements in an inverse semigroup,
$\alpha \, \LLL' \, \gamma \alpha$ if and only if
${\alpha= \gamma^{-1}\gamma\alpha}$.
Similarly, $\beta \, \LLL' \, \gamma\beta$
if and only if ${\beta = \gamma^{-1}\gamma\beta}$.
Therefore, $\gamma\alpha = \gamma\beta \,$
together with $\alpha \, \LLL' \, \gamma \alpha$
and $\beta \, \LLL' \, \gamma\beta$, implies that
\begin{equation*}
\alpha=\gamma^{-1}\gamma\alpha = \gamma^{-1}\gamma\beta = \beta.
\end{equation*}
\end{itemize}

This proves the forward implication of Theorem \ref{main}.
Note that in the above we only required that $S$ is embedded in $Q$ to obtain (M2) - (M6);
for (M1) we required $S$ to be a straight left I-order.

We now prove the converse.
This will consist of proving that the following construction, $P$,
yields a semigroup of straight left I-quotients of $S$,
with ${\RRR' = \RRR^P \cap \, (S \times S)}$ and
$\leq_l \, = \, \leq_{\LLL^P} \cap \, (S \times S)$.
For the convenience of the reader, we now set up the `roadmap'
for the proof.

\begin{roadmap}\label{roadmap}
Let $S$ be a semigroup with $\RRR'$, $\leq_l$ and $\LLL'$
satisfying the conditions of Theorem \ref{main}.
Note that by considering (M2) with $\alpha = \gamma$,
we see that $\leq_l$ is right compatible.
Therefore, since $\LLL'$ is an equivalence relation
associated with a right compatible preorder,
$\LLL'$ is a right congruence.

We begin by defining
\begin{equation*}
\Sigma = \{ (a,b) \in S \times S \, | \, a \, \RRR' \, b \}.
\end{equation*}
We then define an equivalence relation $\sim$ on $\Sigma$, by
\begin{equation*}
(a,b)\sim(c,d)
\end{equation*}
if and only if there exists $x,y \in S$ such that
\begin{equation*}
xa=yc, \, xb=yd, \; x \, \RRR' \, xa \, \LLL' \, a\mbox{ and } y \, \RRR' \, yc \, \LLL' \, c.
\end{equation*}
Note that $x \, \RRR' \, y$ and $a \, \LLL' \, c$ as a consequence.

We show that this is an equivalence relation in Lemma \ref{ER}.
Using $[a,b]$ to denote the equivalence class of  $(a,b)$ under $\sim$ we then define $P = \Sigma \, / \! \sim$ and multiplication on $P$ via the following rule:
\begin{equation*}
{[a,b][c,d] = [ua,vd]}
\text{ where }
{\, u \, \RRR' \, v \, \RRR' \, vc = ub}\mbox{ and } L'_b \, \wedge \, L'_c = L'_{ub} \,.
\end{equation*}

Note that such a $u$ and $v$ exist in $S$ by (M1).
We show that $P$ is a semigroup in Lemma \ref{WD} and Lemma \ref{Ass}
and an inverse semigroup in Lemma \ref{regular} and Lemma \ref{Inv}.

The next step is to show that $S$ embeds into $P$,
by defining $\phi :S \rightarrow P$ by
$a \phi = [x,xa]$, where $x$ is an element in $S$
such that $x \, \RRR' \, xa \, \LLL' \, a$.
The existence of such an $x$ is a consequence of (M1) proved in Lemma \ref{secprops}.
We will prove that this function is an embedding in Lemma \ref{EB}.

We show that
the restriction of $\RRR^P$ to $(S \times S)$ is $\RRR'$
in Lemma \ref{Rproof}, and that
the restriction of $\, \leq_{\LLL^P}$ to $(S \times S)$ is $\, \leq_l$
in Lemma \ref{Lproof}.
Finally, we verify that $P$ is a
semigroup of straight left I-quotients of $S \phi$
in Lemma \ref{ssliq}.
\end{roadmap}

Now that we have set up the `roadmap', the rest of the section
will be the `road trip'.
The properties in Theorem \ref{main} will be used extensively,
so the reader might prefer to have the list of properties in front of them
whilst reading.
For the remaining results in this section,
$S$, $\RRR'$, $\leq_l$ and $\LLL'$
are as described in the conditions of Theorem \ref{main},
and $\Sigma$, $\sim$, $P$ and $\phi$ are as described in
Roadmap \ref{roadmap}.

The following technical lemma will provide a few shortcuts as we procede with the main proof.

\begin{lemma} \label{secprops}
~ \begin{enumerate}[(i)]
\item For all $a \in S$, there exists an $x \in S$ such that
$x \, \RRR' \, xa \, \LLL' \, a$.
\item For all $a,b,x \in S$,
$x \, \RRR' \, xa \, \LLL' \, a$ and $\, a \, \RRR' \, b$ 
together imply that
$x \, \RRR' \, xb \, \LLL' \, b$.
\item For all $x,a \in S$, we have $L'_{xa} \wedge L'_a = L'_{xa}$.
\item For all $a,b,x,y \in S$,
$a \, \RRR' \, b$ and $xa\, \LLL' \, ya$
together imply that $xb \, \LLL' \, yb$.
\end{enumerate}
\end{lemma}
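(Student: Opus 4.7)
The four parts naturally split by difficulty: (i) is a direct specialisation of (M1), (iii) is a direct reformulation of (M3), (ii) reduces to (M5) once (i) is available, and (iv) is the substantive claim which I expect will require combining (M1), (M4) and the earlier parts.

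For (i), I would apply (M1) with $\alpha = \beta = a$. This produces $\gamma, \delta \in S$ satisfying $\gamma \RRR' \delta \RRR' \delta a = \gamma a$ and $L'_a \wedge L'_a = L'_{\gamma a}$; the meet trivially collapses to $\gamma a \LLL' a$, and transitivity of $\RRR'$ upgrades the chain $\gamma \RRR' \delta \RRR' \gamma a$ to $\gamma \RRR' \gamma a$, so $x := \gamma$ works. For (iii), the inequality $xa \leq_l a$ is exactly (M3) with $(\alpha, \beta) = (x, a)$, and in the $\LLL'$-meet-semilattice this reads $L'_{xa} \wedge L'_a = L'_{xa}$.

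For (ii) the $\RRR'$-part is immediate: left compatibility of $\RRR'$ turns $a \RRR' b$ into $xa \RRR' xb$, and combining with $x \RRR' xa$ gives $x \RRR' xb$ by transitivity. For the $\LLL'$-part, I would invoke (i) on $b$ to produce $v \in S$ with $v \RRR' vb \LLL' b$, then apply (M5) with $(\alpha, \beta, \gamma, \delta) = (a, b, x, v)$: the hypotheses hold by assumption and by choice of $v$, and $a \RRR' b$ forces $x \LLL' v$. Right compatibility of $\LLL'$, noted at the start of the Roadmap, then gives $xb \LLL' vb$, and transitivity with $vb \LLL' b$ closes the case.

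The main obstacle is (iv). I would first apply (M1) with $\alpha = xa, \beta = ya$ to obtain $r, s \in S$ satisfying
\begin{equation*}
r \RRR' s \RRR' s(ya) = r(xa) \quad \text{and} \quad L'_{xa} \wedge L'_{ya} = L'_{r(xa)}.
\end{equation*}
The assumption $xa \LLL' ya$ collapses the meet so that $r(xa) \LLL' xa$, and transitivity forces $r \RRR' r(xa)$. Since $\RRR'$ is left compatible, $a \RRR' b$ gives $xa \RRR' xb$ and $ya \RRR' yb$; then (ii) applied to the data $(x, a, b) \mapsto (r, xa, xb)$ yields $r \RRR' r(xb) \LLL' xb$, and symmetrically $s \RRR' s(yb) \LLL' yb$. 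Finally (M4) gives $a \RRR^* b$, and the defining property of $\RRR^*$ converts the equation $rx \cdot a = r(xa) = s(ya) = sy \cdot a$ into $r(xb) = s(yb)$, so the chain $xb \LLL' r(xb) = s(yb) \LLL' yb$ delivers $xb \LLL' yb$. The pivotal observation is that the meet simplification supplied by $xa \LLL' ya$ is precisely what puts the witnesses from (M1) into the hypothesis of (ii), after which the rest is bookkeeping.
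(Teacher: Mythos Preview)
Your proof is correct and follows essentially the same approach as the paper: parts (i) and (iii) are the same direct specialisations of (M1) and (M3); part (ii) uses (i), (M5), and right compatibility of $\LLL'$ in the same way; and part (iv) applies (M1) to $xa, ya$, collapses the meet using $xa\,\LLL'\,ya$, then uses (ii) on each side together with (M4) to transfer the equality from $a$ to $b$, exactly as the paper does (with your $(r,s)$ playing the role of the paper's $(w,z)$).
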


\begin{proof}
~ \begin{enumerate}[(i)]
\item By applying (M1) with $\alpha = \beta = a$,
there exists $x \in S$ such that $x \, \RRR' \, xa$
and $L'_{xa} = L'_a \, \wedge \, L'_a = L'_a$.
\item Let $a,b,x \in S$ such that $\, a \, \RRR' \, b$ and
$x \, \RRR' \, xa \, \LLL' \, a$.
Using the fact that $\RRR'$ is a left congruence,
$b \, \RRR' \, a$ implies that $xb \, \RRR' \, xa \, \RRR' \, x$.
By (i), there exists ${y \in S}$ such that $y \, \RRR' \, yb \, \LLL' \, b$.
We can then use (M5),
to see that $a \, \RRR' \, b$ implies that $x \, \LLL' \, y$.
Therefore, using the fact that $\LLL'$ is a right congruence,
$xb \, \LLL' \, yb \, \LLL' \, b$.
\item Let $x,a \in S$.
By (M3), we know that $\, xa \leq_l a$,
Therefore, by the definition of meet, we have that
$L'_{xa} \wedge L'_a = L'_{xa}$.
\item Applying (M1) to $x a$ and $y a$,
there exists $w,z \in S$ such that
\begin{equation*}
w \, \RRR' \, z \, \RRR' \, zy a = wx a \, \text{ and }
\, L'_{x a} \wedge L'_{y a} = L'_{w x a} .
\end{equation*}
Since $x a \, \LLL' \, y a$,
this gives us
\begin{equation*}
w \, \RRR' \, w x a \, \LLL' \, x a
\text{ and } z \, \RRR' \, zy a \, \LLL' \, y a.
\end{equation*}
Using the fact that $\RRR'$ is a left congruence,
we have that $a \, \RRR' \, b$ implies that both
$x a \, \RRR' \, x b$
and $y a \, \RRR' \, y b$.
Therefore we can apply (ii) to both of the above equations to get
\begin{equation*}
w \, \RRR' \, w x b \, \LLL' \, x b
\text{ and } z \, \RRR' \, zy b \, \LLL' \, y b.
\end{equation*}
Also we can apply (M4) to $wx a = zy a$ to get
$wx b = zy b$.
Therefore $x b \, \LLL' \, w x b
= z y b \, \LLL' \, y b$.

\end{enumerate}
\end{proof}

\begin{lemma}\label{ER}
The relation $\sim$ is an equivalence relation on $\Sigma$.
\end{lemma}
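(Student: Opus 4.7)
The plan is to verify the three properties of an equivalence relation in turn, with transitivity being the main substantive step.

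\textbf{Reflexivity} is immediate: given $(a,b) \in \Sigma$, by Lemma \ref{secprops}(i) there exists $x \in S$ with $x \,\RRR'\, xa \,\LLL'\, a$, and taking $y = x$ witnesses $(a,b) \sim (a,b)$ since the equations $xa = ya$ and $xb = yb$ hold trivially. \textbf{Symmetry} is equally immediate: if $x,y$ witness $(a,b) \sim (c,d)$, then the swapped pair $y,x$ witnesses $(c,d) \sim (a,b)$, since the defining conditions are symmetric in the two slots.

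For \textbf{transitivity}, suppose $(a,b) \sim (c,d)$ with witnesses $x_1, y_1$, and $(c,d) \sim (e,f)$ with witnesses $x_2, y_2$. From the definition and its consequences we read off $a \,\LLL'\, c \,\LLL'\, e$ and $y_1 \,\LLL'\, y_1 c = x_1 a \,\LLL'\, a$, hence $y_1 \,\LLL'\, c \,\LLL'\, x_2$. I would then apply (M1) to the pair $(\alpha,\beta) = (y_1, x_2)$ to obtain $\gamma, \delta \in S$ with
\begin{equation*}
\gamma \,\RRR'\, \delta \,\RRR'\, \delta x_2 = \gamma y_1 \quad \text{and} \quad L'_{y_1} \wedge L'_{x_2} = L'_{\gamma y_1}.
\end{equation*}
Because $L'_{y_1} = L'_{x_2}$, this meet collapses to $L'_{y_1}$, so that $\gamma y_1 \,\LLL'\, y_1$ and $\delta x_2 \,\LLL'\, x_2$. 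I then claim the pair $x := \gamma x_1,\ y := \delta y_2$ witnesses $(a,b) \sim (e,f)$.

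The equalities $xa = ye$ and $xb = yf$ follow by the chain
\begin{equation*}
\gamma x_1 a = \gamma y_1 c = \delta x_2 c = \delta y_2 e,
\end{equation*}
and analogously with $b,d,f$. For the Green-type conditions, left compatibility of $\RRR'$ applied to $x_1 \,\RRR'\, x_1 a$ yields $\gamma x_1 \,\RRR'\, \gamma x_1 a$; right compatibility of $\LLL'$ (noted in the roadmap) applied to $\gamma y_1 \,\LLL'\, y_1$ yields $\gamma y_1 c \,\LLL'\, y_1 c \,\LLL'\, c \,\LLL'\, a$, so that $xa = \gamma y_1 c \,\LLL'\, a$. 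The symmetric argument using $y_2 \,\RRR'\, y_2 e$ and $\delta x_2 \,\LLL'\, x_2$ gives $y \,\RRR'\, ye \,\LLL'\, e$, completing the verification. The main obstacle is precisely this bookkeeping: one must recognise that $y_1$ and $x_2$ fall in the same $\LLL'$-class so that (M1) produces witnesses staying in $L'_c$, and then combine left compatibility of $\RRR'$ with right compatibility of $\LLL'$ to push the relations through the products.
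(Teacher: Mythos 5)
Your reflexivity and symmetry arguments are fine, and the broad shape of your transitivity argument (apply (M1) to the pair of ``inner'' witnesses $y_1,x_2$ and multiply the old witnesses on the left by the resulting $\gamma,\delta$) matches the paper's. However, there is a genuine gap at the pivotal step: you claim to read off $y_1 \,\LLL'\, y_1c$ and hence $y_1 \,\LLL'\, c \,\LLL'\, x_2$. The defining conditions give only $y_1 \,\RRR'\, y_1c$ and $y_1c \,\LLL'\, c$ (and likewise $x_2 \,\RRR'\, x_2c \,\LLL'\, c$); neither $y_1 \,\LLL'\, y_1c$ nor $x_2 \,\LLL'\, c$ follows. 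Indeed, in the intended model where $\RRR'$ and $\LLL'$ are restrictions from an inverse oversemigroup $Q$, the condition $y_1 \,\RRR^Q\, y_1c \,\LLL^Q\, c$ says precisely $y_1 \,\LLL^Q\, c^{-1}$ (Lemma \ref{box}), not $y_1 \,\LLL^Q\, c$, so your intermediate claims are false in general. The fact you actually need, $L'_{y_1} = L'_{x_2}$, is true, but it requires an appeal to (M5) (take $\alpha = \beta = c$, $\gamma = y_1$, $\delta = x_2$; since $\RRR'$ is an equivalence relation, $c \,\RRR'\, c$ gives $y_1 \,\LLL'\, x_2$), which you never make. Your argument genuinely depends on this collapse, since it is what yields $\gamma y_1 \,\LLL'\, y_1$ and $\delta x_2 \,\LLL'\, x_2$, the relations you then push through right multiplication by $c$.

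Once that justification is supplied, the rest of your verification of $(a,b) \sim (e,f)$ with $x = \gamma x_1$, $y = \delta y_2$ goes through, and it is worth noting that the paper takes a slightly different route which avoids needing $y_1 \,\LLL'\, x_2$ altogether: after applying (M1) to $y_1$ and $x_2$, it uses (M2) to transport the meet, so that $L'_{y_1} \wedge L'_{x_2} = L'_{\gamma y_1}$ gives $L'_{y_1c} \wedge L'_{x_2c} = L'_{\gamma y_1 c}$, and then the relations $y_1c = x_1a \,\LLL'\, a$ and $x_2c \,\LLL'\, c \,\LLL'\, a$ collapse this meet to $L'_a$, giving $xa \,\LLL'\, a$ directly. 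In effect the paper trades your (missing) use of (M5) for a use of (M2); either repair is acceptable, but as written your key step is unjustified.
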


\begin{proof} We check the required properties in turn.

\textbf{Reflexivity}: Let $(a,b) \in \Sigma$. By definition,
$(a,b) \sim (a,b)$ if there exists $x,y \in S$ such that
\[xa=ya, \, xb=yb, \; x \, \RRR' \, xa \, \LLL' \, a\mbox{ and } y \, \RRR' \, ya \, \LLL' \, a.\]
By Lemma \ref{secprops} (i), there is an $x \in S$ such that
$x \, \RRR' \, xa \, \LLL' \, a$. Then take $y = x$ to get
reflexivity.

\textbf{Symmetry}: Let $(a,b) \sim (c,d)$. By definition there must exist $x,y \in S$
such that
\begin{equation*}
xa=yc, \, xb=yd, \; x \, \RRR' \, xa \, \LLL' \, a\mbox{ and } y \, \RRR' \, yc \, \LLL' \, c,
\end{equation*}
By switching the roles of $x$ and $y$, we can immediately see that $(c,d) \sim (a,b)$.

\textbf{Transitivity}: Let $(a,b) \sim (c,d)$. Therefore there exists $x,y \in S$ such that
\begin{equation}\label{xayc}
xa=yc, \, xb=yd, \; x \, \RRR' \, xa \, \LLL' \, a\mbox{ and }  y \, \RRR' \, yc \, \LLL' \, c.
\end{equation}
Suppose also that $(c,d) \sim (e,f)$. Then there exists $w,z \in S$ such that
\begin{equation}\label{wcze}
wc=ze, \, wd=zf, \; w \, \RRR' \, wc \, \LLL' \, c\mbox{ and }  z \, \RRR' \, ze \, \LLL' \, e.
\end{equation}
We need $(a,b) \sim (e,f)$. That is, we need $X,Y \in S$ such that
\begin{equation}\label{XaYe}
X\!a=Y\!e, \, X\!b=Y\!f, \; X \, \RRR' \, X\!a \, \LLL' \, a\mbox{ and }  Y \, \RRR' \, Y\!e \, \LLL' \, e.
\end{equation}
We apply Property (M1) to $y$ and $w$, to obtain $h,k \in S$
such that
\begin{equation}\label{hRk}
h \, \RRR' \, k \, \RRR' \, kw = hy \, \text{ and } \, L'_y \wedge L'_w = L'_{hy}.  
\end{equation}
We then take $X = hx$ and  $Y = kz$ to give us that
\begin{equation*}
\begin{split}
X\!a &= hxa = hyc = kwc = kze = Y\!e\\
X\!b &= hxb = hyd = kwd = kzf = Y\!f,
\end{split}
\end{equation*}
using (\ref{xayc}), (\ref{hRk}) and (\ref{wcze}).
Also, since $\RRR'$ is a left congruence,
\begin{equation*}
\begin{split}
x \, \RRR' \, xa &\implies X = hx \, \RRR' \, hxa = X\!a\mbox{ and }\\
z \, \RRR' \, ze &\implies Y = kz \, \RRR' \, kze = Y\!e.
\end{split}
\end{equation*}

Using Property (M2) we have that (\ref{hRk}) implies that
$L'_{yc} \wedge L'_{wc} = L'_{hyc}$.
Hence, using $xa = yc$ from (\ref{xayc})
and $wc \, \LLL' \, c$ from (\ref{wcze}),
we have $L'_{xa} \wedge L'_{c} = L'_{hxa}$.
We can then use (\ref{xayc}) to give us $a \, \LLL' \, xa = yc \, \LLL' \, c$, and so
$L'_{a} \wedge L'_{a} = L'_{hxa}$.
Therefore $X\!a \, \LLL' \, a$.

The last relation needed can be obtained similarly or achieved more directly  by noticing that
\begin{equation*}
e \, \LLL' \, c \, \LLL' \, a \, \LLL' \, X\!a = Y\!e.
\end{equation*}
\end{proof}

\begin{lemma}\label{WD}
Multiplication in $P$ is well-defined.
\end{lemma}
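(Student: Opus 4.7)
The plan is to verify well-definedness in three stages: first, confirm that the pair $(ua, vd)$ lies in $\Sigma$; second, show independence from the choice of the ``Ore pair'' $(u, v)$ satisfying the defining conditions; third, show independence from the choice of representatives of $[a, b]$ and $[c, d]$.

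For the first stage, since $\RRR'$ is a left congruence and $a \, \RRR' \, b$, we have $ua \, \RRR' \, ub$, and similarly $c \, \RRR' \, d$ gives $vc \, \RRR' \, vd$. Combined with the defining equality $ub = vc$, transitivity yields $ua \, \RRR' \, vd$, so $(ua, vd) \in \Sigma$.

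For the second stage, suppose $(u_1, v_1)$ and $(u_2, v_2)$ both satisfy the defining conditions. Since both $u_1 b$ and $u_2 b$ represent the meet $L'_b \wedge L'_c$, we have $u_1 b \, \LLL' \, u_2 b$, and then by Lemma \ref{secprops}(iv), applied to $b \, \RRR' \, a$, we obtain $u_1 a \, \LLL' \, u_2 a$. Now apply (M1) to the pair $u_1, u_2$ to produce $h, k \in S$ with $h \, \RRR' \, k \, \RRR' \, k u_2 = h u_1$ and $L'_{u_1} \wedge L'_{u_2} = L'_{h u_1}$. I would use $X = h$ and $Y = k$ as witnesses that $(u_1 a, v_1 d) \sim (u_2 a, v_2 d)$. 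The equality $X u_1 a = Y u_2 a$ is immediate. For $X v_1 d = Y v_2 d$, chain $h v_1 c = h u_1 b = k u_2 b = k v_2 c$, and then, since (M4) gives $c \, \RRR^* \, d$ from $c \, \RRR' \, d$, one may replace $c$ by $d$ to obtain $h v_1 d = k v_2 d$. The remaining ``box'' conditions follow from left compatibility of $\RRR'$ (giving $h \, \RRR' \, h u_1 \, \RRR' \, h u_1 a$) together with (M2) applied to $L'_{u_1} \wedge L'_{u_2} = L'_{h u_1}$ with right multiplier $a$, which yields $L'_{h u_1 a} = L'_{u_1 a} \wedge L'_{u_2 a} = L'_{u_1 a}$ since $u_1 a \, \LLL' \, u_2 a$.

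For the third stage, I would argue in two halves, first replacing $[a, b]$ by $(a', b') \sim (a, b)$ and then $[c, d]$ by $(c', d') \sim (c, d)$. Using witnesses $x, y$ from the defining $\sim$-relation together with fresh applications of (M1) to build common upper bounds in the $\LLL'$-semilattice, one splices the multipliers so that the resulting Ore pairs produce $\sim$-related outputs, with (M2) tracking meets under right multiplication and (M4) transporting pointwise equalities via $\RRR^*$-cancellability exactly as in the second stage. The main obstacle throughout will be the combinatorial bookkeeping of this last stage: verifying any single $\sim$-condition is routine, but arranging all the multipliers so that the four $\sim$-conditions hold simultaneously requires careful orchestration of (M1) and (M2) at the right moments.
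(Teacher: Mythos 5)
Your first two stages are sound. The check that $(ua,vd)\in\Sigma$ is correct, and your independence-of-Ore-pair argument works: applying (M1) to $u_1$ and $u_2$ (rather than to $u_1a$ and $u_2a$, which is the other natural choice) gives witnesses $h,k$ with $hu_1=ku_2$, and your chain $hv_1c=hu_1b=ku_2b=kv_2c$ followed by (M4) with $c\,\RRR'\,d$, together with (M2) and Lemma \ref{secprops}(iv), does yield $(u_1a,v_1d)\sim(u_2a,v_2d)$. (Note that $h\,\RRR'\,hu_1a$ also needs $u_1\,\RRR'\,u_1b\,\RRR'\,u_1a$, which uses $a\,\RRR'\,b$ and left compatibility; this is routine but should be said.)

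The genuine gap is your third stage, which is precisely the substance of the lemma and occupies most of the actual proof. The phrase ``splices the multipliers \dots exactly as in the second stage'' does not describe a proof, and the analogy is misleading: in the second stage you compare two Ore pairs for the \emph{same} pair $(b,c)$, so the meet condition forces $u_1b\,\LLL'\,u_2b$ from the outset; when you change representative $(a,b)\mapsto(\tilde a,\tilde b)$ you must compare an Ore pair for $(b,c)$ with one for $(\tilde b,c)$, and there is no a priori relation between them. What is missing is the concrete construction: with $\sim$-witnesses $x,y$ for $(a,b)\sim(\tilde a,\tilde b)$ and $(u,v)$ the chosen Ore pair for $(b,c)$, one applies (M1) to $u$ and $x$ to obtain $s,t$ with $tu=sx$ and $L'_x\wedge L'_u=L'_{sx}$, and takes $\tilde u=sy$, $\tilde v=tv$; one must then verify (using Lemma \ref{secprops}(ii), (M2), $xb=y\tilde b$, and the semilattice identities) that $(\tilde u,\tilde v)$ genuinely satisfies the multiplication-rule conditions for $(\tilde b,c)$, and finally, by a further application of (M1) to $ua$ and $sxa$, that $(ua,vd)\sim(sxa,tvd)=(\tilde u\tilde a,\tilde v d)$. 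A separate (not merely dual) argument, applying (M1) to $v$ and $x$, is needed for changing the second factor's representative. None of these constructions or verifications appear in your proposal, so well-definedness under change of representatives remains unproved.
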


\begin{proof}
Let $[a,b],[c,d]\in P$.
From Roadmap \ref{roadmap}, we have that 
\begin{equation*}
{[a,b][c,d] = [ua,vd]}, 
\end{equation*}
where $u, v \in S$ are the elements
that exist by (M1) such that
\begin{equation*}
{\, u \, \RRR' \, v \, \RRR' \, vc = ub}\mbox{ and } L'_b \, \wedge \, L'_c = L'_{ub}.
\end{equation*}

We need to show that the product, $[a,b][c,d]$,
depends neither upon the choice of representative for the equivalence class,
nor the choice of $u$ and $v$ appearing in the rule for multiplication.
We start with the choice of $u$ and $v$.

\textbf{Choice of $u$ and $v$}:
Let
\begin{equation}\label{Choice1}
u \, \RRR' \, v \, \RRR' \, vc = ub\mbox{ and } L'_{b} \, \wedge \, L'_{c} = L'_{ub},
\end{equation}
so that
${[a,b][c,d] = [ua,vd]}$.
Also let
\begin{equation}\label{Choice2}
s \, \RRR' \, t \, \RRR' \, {tc = sb}\mbox{ and }L'_{b} \wedge L'_{c} = L'_{sb},
\end{equation}
so that $[a,b][c,d] = [sa,td]$.
We show that
$(ua,vd) \sim (sa,td)$, which is true exactly if there exists
$w,z \in S$ such that
\begin{equation}\label{Choice3}
wua=zsa, \: wvd=ztd, \; w \, \RRR' \, wua \, \LLL' \, ua\mbox{ and } z \, \RRR' \, zsa \, \LLL' \, sa.
\end{equation}
Applying Property (M1) to $ua$ and $sa$, let $w$ and $z$ be elements such that
\begin{equation}\label{Choice4}
{w \, \RRR' \, z \, \RRR' \, zsa = wua} \, \text{ and } \, {L'_{ua} \wedge L'_{sa} = L'_{wua}}.   
\end{equation}
Using the fact that $a \, \RRR' \, b$, we see that
\begin{equation*}
wua = zsa \overset{(M4)}{\implies} wub=zsb.
\end{equation*}
Then, as $ub=vc$ and $tc=sb$ from (\ref{Choice1}) and (\ref{Choice2}),
this gives us $wvc = ztc$.
We then use $c \, \RRR' \, d$ to obtain
\begin{equation*}
wvc = ztc \overset{(M4)}{\implies} wvd=ztd.
\end{equation*}
From (\ref{Choice1}) and (\ref{Choice2}), we also see that
\begin{equation*}
L'_{ub} = L'_{b} \wedge L'_{c} = L'_{sb} \implies ub \, \LLL'\, sb,
\end{equation*}
which, together with $a \, \RRR' \, b$, implies that $ua \, \LLL'\, sa$ by
Lemma \ref{secprops} (iv).
Using the definition of $\wedge$, along with (\ref{Choice3}), we then have
\begin{equation*}
L'_{ua} = L'_{sa} = L'_{ua} \wedge L'_{sa} = L'_{wua} = L'_{zsa}.
\end{equation*}
This gives us the required properties for $(ua,vd) \sim (sa,td)$.

\textbf{First Variable}:
Let $(a,b) \sim (\tilde{a},\tilde{b})$. Therefore there exists
$x,y \in S$ such that
\begin{equation}\label{FV1}
xa=y\tilde{a}, \, xb=y\tilde{b}, \;
x \, \RRR' \, xa \, \LLL' \, a\mbox{ and } y \, \RRR' \, y\tilde{a} \, \LLL' \, \tilde{a}.
\end{equation}
In order to show well-definedness in the first variable,
we need that for all $[c,d] \in P$, $[a,b][c,d] = [\tilde{a},\tilde{b}][c,d]$.
With that goal in mind,
we apply (M1) to $b$ and $c$, to get that there exists
$u,v \in S$ such that
\begin{equation}\label{FV2}
u \, \RRR' \, v \, \RRR' \, vc = ub \text{ and } L'_{b} \wedge L'_{c} = L'_{ub}.
\end{equation}
Therefore
$[a,b][c,d] = [ua,vd]$.

Our aim is to first find elements $\tilde{u}$ and $\tilde{v}$ which witness
$[\tilde{a},\tilde{b}][c,d]=[\tilde{u}\tilde{a},\tilde{v}d]$.
We will then prove that
$(ua,vd) \sim (\tilde{u}\tilde{a},\tilde{v}d)$.
Of course, we could use (M1) applied to $\tilde{b}$ and $c$,
but for our purposes we need to be more careful.

Applying Property (M1) to $u$ and $x$, we know that there exists
$s,t \in S$ such that
\begin{equation}\label{tusx}
s \, \RRR' \, t \, \RRR' \, {tu = sx}\, \text{ and } \, L'_{x} \wedge L'_{u} = L'_{sx}.
\end{equation}
We take $\tilde{u} = sy$ and $\tilde{v} = tv$.

We want to prove that
$[\tilde{a},\tilde{b}][c,d] = [\tilde{u}\tilde{a},\tilde{v}d]$.
To prove this, it is sufficient that
$\tilde{u} \, \RRR' \, \tilde{v} \, \RRR' \, \tilde{v}c = \tilde{u}\tilde{b}$
and that ${L'_{\tilde{b}} \wedge L'_{c} = L'_{\tilde{u}\tilde{b}}}$.
Substituting letters and rewriting, we need to prove that
\begin{equation}\label{(a1)}
sy \, \RRR' \, tv \, \RRR' \, tvc = sy\tilde{b}
\end{equation}
and
\begin{equation}\label{(b1)}
L'_{\tilde{b}} \wedge L'_{c} = L'_{sy\tilde{b}}.
\end{equation}
We start by proving each relation in Equation (\ref{(a1)}) in turn:

We know that $y \, \RRR' \, x$ from (\ref{FV1})
and $u \, \RRR' \, v$ from (\ref{FV2}).
Using the fact that $\RRR'$ is a left congruence,
$y \, \RRR' \, x$ and $u \, \RRR' \, v$ imply that
$sy \, \RRR' \, sx$ and $tu \, \RRR' \, tv$ respectively.
Then, as $sx=tu$ from (\ref{tusx}), this gives us that $sy \, \RRR' \, tv$.
Using left compatibility of  $\RRR'$ again,
$v \, \RRR' \, vc$ from (\ref{FV2}) implies that $tv \, \RRR' \, tvc$.
Also, using $vc = ub$, $tu = sx$, $xb = y\tilde{b}$, we get
\begin{equation}\label{tvc = sxb}
tvc = tub = sxb = sy\tilde{b}.
\end{equation}

We now verify that Equation (\ref{(b1)}) holds.
We can use (M2) to give us
\begin{equation*}
L'_{x} \wedge L'_{u} = L'_{sx} \implies L'_{xb} \wedge L'_{ub} = L'_{sxb}.
\end{equation*}
Using Lemma \ref{secprops} (ii) we have that $a \, \RRR' \, b$ imples that
$x \, \RRR' \, xb \, \LLL' \, b$.
Using $xb \, \LLL' \, b$ and $L'_{b} \wedge L'_{c} = L'_{ub}$, we have
\begin{equation*}
L'_{b} \wedge (L'_{b} \wedge L'_{c}) = L'_{sxb}.
\end{equation*}
Therefore
\begin{equation}\label{Lsxb}
L'_{b} \wedge L'_{c} = (L'_{b} \wedge L'_{b}) \wedge L'_{c} = L'_{sxb}.
\end{equation} We have that $\tilde{a} \, \RRR' \, \tilde{b}$, so that together with
 $y \, \RRR' \, y\tilde{a} \, \LLL' \, \tilde{a}$ from (\ref{FV1}) 
we may apply Lemma \ref{secprops} (ii) again 
to obtain $y \, \RRR' \, y\tilde{b} \, \LLL' \, \tilde{b}$.
Using (\ref{FV1})  we now have
$b \, \LLL' \, xb = y\tilde{b} \, \LLL' \, \tilde{b}$.
Using this together with $xb = y\tilde{b}$ and (\ref{Lsxb}), we have
\begin{equation*}
L'_{\tilde{b}} \wedge L'_{c} = L'_{sy\tilde{b}},
\end{equation*}
which is (\ref{(b1)}). Therefore $[\tilde{a},\tilde{b}][c,d] = [sy\tilde{a},tvd]$.

Using $xa = y\tilde{a}$ from (\ref{FV1}), this also means that
$[\tilde{a},\tilde{b}][c,d] = [sxa,tvd]$.
Therefore, in order to have well-definedness in the first variable, one needs
${(ua,vd) \sim (sxa, tvd)}$.
This is true exactly if there exists ${w,z \in S}$ such that
\begin{equation*}
wua=zsxa, \, wvd=ztvd, \; w \, \RRR' \, wua \, \LLL' \, ua\mbox{ and } z \, \RRR' \, zsxa \, \LLL' \, sxa.
\end{equation*}
Applying Property (M1) to $ua$ and $sxa$, take $w$ and $z$ to be elements in $S$ such that
\begin{equation}\label{Lwua}
w \, \RRR' \, z \, \RRR' \, {zsxa = wua} \,
\text{ and } \, L'_{ua} \wedge L'_{sxa} = L'_{wua}.    
\end{equation}
Since $a \, \RRR' \, b$, we know that
${wua = zsxa}$ implies $wub = zsxb$ by (M4).
We then use $sxb = tvc$ from (\ref{tvc = sxb}) and
$ub = vc$ from (\ref{FV2}) to obtain $wvc = ztvc$.
And therefore, using (M4) again, $c \, \RRR' \, d$ implies that $wvd = ztvd$.

Using Property (M2),
$L'_{x} \wedge L'_{u} = L'_{sx}$ implies that $L'_{xa} \wedge L'_{ua} = L'_{sxa}$.
We then use $xa \, \LLL' \, a$ from (\ref{FV1})
and Lemma \ref{secprops} (iii), to get
\begin{equation*}
L'_{xa} \wedge L'_{ua} = L'_{sxa} \implies L'_{a} \wedge L'_{ua} = L'_{sxa}
\implies L'_{ua} = L'_{sxa}.
\end{equation*}
We then compare this with $L'_{ua} \wedge L'_{sxa} = L'_{wua}$
from (\ref{Lwua}) to
obtain $L'_{ua} = L'_{wua}$.
Lastly, $sxa \, \LLL' \, ua \, \LLL' \, wua = zsxa$.
Altogether, this proves that \[{(ua,vd) \sim (sxa, tvd)} = (\tilde{u}\tilde{a},\tilde{v}d).\]

\textbf{Second Variable}:
Let $(c,d) \sim (\tilde{c},\tilde{d})$. Therefore there exists
$x,y \in S$ such that
\begin{equation}\label{SV1}
xc=y\tilde{c}, \, xd=y\tilde{d}, \;
x \, \RRR' \, xc \, \LLL' \, c\mbox{ and } y \, \RRR' \, y\tilde{c} \, \LLL' \, \tilde{c}.
\end{equation}
In order to show well-definedness in the second variable,
we need that for all $[a,b] \in P$ we have the equality $[a,b][c,d] = [a,b][\tilde{c},\tilde{d}]$.
With that goal in mind, given $[a,b] \in P$,
we apply (M1) to $b$ and $c$, to get that there
exists $u,v \in S$ such that
\begin{equation}\label{SV2}
u \, \RRR' \, v \, \RRR' \, vc = ub \,
\text{ and } \, L'_{b} \wedge L'_{c} = L'_{ub},
\end{equation}
so that $[a,b][c,d] = [ua,vd]$.

Our aim is to find elements $\tilde{u}$ and $\tilde{v}$ which witness
$[a,b][\tilde{c},\tilde{d}]=[\tilde{u}a,\tilde{v}\tilde{d}]$.
We will then prove that
$(ua,vd) \sim (\tilde{u}a,\tilde{v}\tilde{d})$.

Applying Property (M1) to $v$ and $x$, we know that there exists
$p,q \in S$ such that
\begin{equation}\label{pvqx}
p \, \RRR' \, q \, \RRR' \, {qx = pv} \, \text{ and } \, {L'_{v} \wedge L'_{x} = L'_{pv}}.  
\end{equation}
We take $\tilde{u} = pu$ and $\tilde{v} = qy$.

We want to prove that
$[a,b][\tilde{c},\tilde{d}]=[\tilde{u}a,\tilde{v}\tilde{d}]$.
To prove this, it is sufficient that
\begin{equation}
\tilde{u} \, \RRR' \, \tilde{v} \, \RRR' \, \tilde{v}\tilde{c} = \tilde{u}b
\, \text{ and } \, {L'_{b} \wedge L'_{\tilde{c}} = L'_{\tilde{u}b}}.   
\end{equation}
Rewriting this, we need to prove that
\begin{equation}\label{(a2)}
pu \, \RRR' \, qy \, \RRR' \, qy\tilde{c} = pub
\end{equation}
and
\begin{equation}\label{(b2)}
L'_b \wedge L'_{\tilde{c}} = L'_{pub}.
\end{equation}
We start be proving each relation in Equation (\ref{(a2)}) in turn:

We know that $u \, \RRR' \, v$ from (\ref{SV2})
and that $x \, \RRR' \, y$ from (\ref{SV1}).
Using the fact that $\RRR'$ is a left congruence,
$u \, \RRR' \, v$ and $x \, \RRR' \, y$ imply that
$pu \, \RRR' \, pv$ and $qx \, \RRR' \, qy$ respectively.
Then, since $pv = qx$ from (\ref{pvqx}), this gives us that $pu \, \RRR' \, qy$.
Using left compatibility of $\RRR'$ again,
$u \, \RRR' \, ub$ from (\ref{SV2}) implies that $pu \, \RRR' \, pub$.
Also, using $ub = vc$ from (\ref{SV2}), $pv = qx$ from (\ref{pvqx}),
and $xc = y\tilde{c}$ from (\ref{SV1}), we get
\begin{equation}\label{pub = qxc}
pub = pvc = qxc = qy\tilde{c}.
\end{equation}
We now prove Equation (\ref{(b2)}).
Using (M2) applied to (\ref{pvqx}), we have
\begin{equation*}
L'_{v} \wedge L'_{x} = L'_{pv} \implies
L'_{vc} \wedge L'_{xc} = L'_{pvc}.
\end{equation*}
We use $L'_{b} \wedge L'_{c} = L'_{vc}$ from (\ref{SV2})
and $xc \, \LLL' \, c$ from (\ref{SV1})
to get
\begin{equation*}
(L'_{b} \wedge L'_{c}) \wedge L'_{c} = L'_{pvc}.
\end{equation*}
Therefore
\begin{equation*}
L'_{b} \wedge L'_{c} = L'_{b} \wedge (L'_{c} \wedge L'_{c}) = L'_{pvc}.
\end{equation*}
We apply $c \, \LLL' \, xc = y\tilde{c}\, \LLL' \, \tilde{c}$
from (\ref{SV1}) and $ub = vc$ from (\ref{SV2}) to get
\begin{equation*}
L'_{b} \wedge L'_{\tilde{c}} = L'_{pub}.
\end{equation*}
This concludes the verification of Equations (\ref{(a2)}) and (\ref{(b2)}), so that 
using (\ref{SV1})
\[[a,b][\tilde{c},\tilde{d}] = [pua,qy\tilde{d}] = [pua,qxd].\]

Therefore, in order to have well-definedness in the second variable, one needs
$(ua,vd) \sim (pua, qxd)$.
This is true exactly if there exists $w,z \in S$ such that
\begin{equation*}
wua=zpua, \, wvd=zqxd, \; w \, \RRR' \, wua \, \LLL' \, ua\mbox{ and } z \, \RRR' \, zpua \, \LLL' \, pua.
\end{equation*}
Applying Property (M1) to $ua$ and $pua$, take $w$ and $z$ to be elements in $S$ such that
$w \, \RRR' \, z \, \RRR' \, {zpua = wua}$ and $L'_{ua} \wedge L'_{pua} = L'_{wua}$.
We use $pub = qxc$ from (\ref{pub = qxc})
and $ub = vc$ from (\ref{SV2}), along with
$a \, \RRR' \, b$ and $c \, \RRR' \, d$, to obtain
\begin{equation*}
wua = zpua \overset{(M4)}{\implies}
wub = zpub \implies wvc = zqxc \overset{(M4)}{\implies} wvd = zqxd.
\end{equation*}
Using Property (M2),
$L'_{v} \wedge L'_{x} = L'_{pv}$ implies that $L'_{vc} \wedge L'_{xc} = L'_{pvc}$.
We then use $xc \, \LLL' \, c$ from (\ref{SV1})
and Lemma \ref{secprops} (iii), to get
\begin{equation*}
L'_{vc} \wedge L'_{xc} = L'_{pvc} \implies L'_{vc} \wedge L'_{c} = L'_{pvc}
\implies L'_{vc} = L'_{pvc} \implies L'_{ub} = L'_{pub}.
\end{equation*}
We apply Lemma \ref{secprops} (iv) to $ub \, \LLL' \, pub$ and
$a \, \RRR' \, b$ to obtain $ua \, \LLL' \, pua$.
Therefore $L'_{wua} = {L'_{ua} \wedge L'_{pua}} = L'_{ua}$.
Finally,  $pua \, \LLL' \, ua \, \LLL' \, wua = zpua$,
which gives us all the
necessary conditions for ${(ua,vd) \sim (pua, qxd) = (\tilde{u}a, \tilde{v}\tilde{d})}$.

\vspace{2mm}

Note that by using well-definedness in the first variable and well-definedness in the second variable
together, we can see that for  $(a,b) \sim (\tilde{a},\tilde{b})$ and  $(c,d) \sim (\tilde{c},\tilde{d})$,
we get
\begin{equation*}
(a,b)(c,d) \sim (\tilde{a},\tilde{b})(c,d) \sim (\tilde{a},\tilde{b})(\tilde{c},\tilde{d}).
\end{equation*}
Therefore, by transitivity, this multiplication is well-defined.
\end{proof}

\begin{lemma}\label{Ass}
Multiplication in $P$ is associative.
\end{lemma}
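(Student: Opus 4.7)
My plan is to compute both $([a,b][c,d])[e,f]$ and $[a,b]([c,d][e,f])$ using the multiplication rule twice on each side, and then to show directly that the two resulting representatives are $\sim$-equivalent by producing an explicit witness.

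Applying (M1) to $b,c$ yields $u_1,v_1 \in S$ with $u_1 \, \RRR' \, v_1 \, \RRR' \, v_1 c = u_1 b$ and $L'_b \wedge L'_c = L'_{u_1 b}$, so $[a,b][c,d] = [u_1 a, v_1 d]$. Applying (M1) to $v_1 d, e$ yields $u_2,v_2$ with $u_2 \, \RRR' \, v_2 \, \RRR' \, v_2 e = u_2 v_1 d$ and $L'_{v_1 d} \wedge L'_e = L'_{u_2 v_1 d}$, giving $([a,b][c,d])[e,f] = [u_2 u_1 a, v_2 f]$. Symmetrically, applying (M1) to $d,e$ yields $u_3,v_3$ with $u_3 d = v_3 e$ and $L'_d \wedge L'_e = L'_{u_3 d}$, so $[c,d][e,f] = [u_3 c, v_3 e]$; and applying (M1) to $b, u_3 c$ yields $u_4,v_4$ with $u_4 b = v_4 u_3 c$ and $L'_b \wedge L'_{u_3 c} = L'_{u_4 b}$, giving $[a,b]([c,d][e,f]) = [u_4 a, v_4 v_3 e]$. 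The task is therefore to show $(u_2 u_1 a, v_2 f) \sim (u_4 a, v_4 v_3 e)$ in $\Sigma$.

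To produce the required witnesses, I apply (M1) one more time to $u_2 u_1$ and $u_4$, obtaining $s, t \in S$ with $s \, \RRR' \, t \, \RRR' \, t u_4 = s u_2 u_1$ and $L'_{u_2 u_1} \wedge L'_{u_4} = L'_{s u_2 u_1}$. I then propose the witnesses $x = s$ and $y = t$ and verify the four conditions required by $\sim$. The equality $x(u_2 u_1 a) = y(u_4 a)$ is immediate from $s u_2 u_1 = t u_4$. The equality $x(v_2 f) = y(v_4 v_3 e)$ is the delicate one: using $a \, \RRR' \, b$ and (M4), the equality $s u_2 u_1 a = t u_4 a$ transfers to $s u_2 u_1 b = t u_4 b$; then substitute $u_1 b = v_1 c$, $u_4 b = v_4 u_3 c$ to reach $s u_2 v_1 c = t v_4 u_3 c$, invoke $c \, \RRR' \, d$ and (M4) to pass to $d$, substitute $u_2 v_1 d = v_2 e$ and $u_3 d = v_3 e$ to reach $s v_2 e = t v_4 v_3 e$, and finally use $e \, \RRR' \, f$ together with (M4) to obtain $s v_2 f = t v_4 v_3 f$; after checking that $v_4 v_3 e \, \RRR' \, v_4 v_3 f$ (via left compatibility of $\RRR'$), this is exactly what we need.

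The remaining $\RRR'$- and $\LLL'$-conditions $s \, \RRR' \, s(u_2 u_1 a) \, \LLL' \, u_2 u_1 a$ and $t \, \RRR' \, t(u_4 a) \, \LLL' \, u_4 a$ are handled by Lemma~\ref{secprops}(ii), reducing them to the corresponding conditions with $b$ in place of $a$ (since $a \, \RRR' \, b$). These in turn reduce, via (M2) applied to the meet equation $L'_{u_2 u_1} \wedge L'_{u_4} = L'_{s u_2 u_1}$ multiplied on the right by $b$, together with Lemma~\ref{secprops}(iii), to the assertion that $L'_{u_2 u_1 b} = L'_{u_4 b}$; and this last fact follows by computing both sides as iterated meets of $L'_b$, $L'_c$, $L'_d$, $L'_e$ using the meet data provided by each of the four invocations of (M1), together with right-distributivity (M2) and associativity/commutativity of the meet operation on $S/\LLL'$. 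The main obstacle is exactly this last bookkeeping step: one must verify that both $L'_{u_2 u_1 b}$ and $L'_{u_4 b}$ coincide with $L'_b \wedge L'_c \wedge L'_{(\text{stuff involving } d,e)}$ after suitable rewriting, which requires repeated careful application of (M2) to push right multiplications through meets and match the two expressions.
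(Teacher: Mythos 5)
Your overall strategy (compute both triple products with independently chosen (M1)-witnesses and then exhibit an explicit $\sim$-witness by one further application of (M1) to the two left coordinates) is genuinely different from the paper's, which instead applies (M1) to $v_1$ and $u_3$ and uses the freedom in the choice of witnesses (already justified in Lemma~\ref{WD}) to make \emph{both} triple products come out as the \emph{same} pair, so that no $\sim$-verification is needed at all. Your equational chain for $s(v_2f)=t(v_4v_3f)$ via repeated use of (M4) is correct (modulo a slip: the multiplication rule gives $[c,d][e,f]=[u_3c,v_3f]$ and hence $[a,b]([c,d][e,f])=[u_4a,v_4v_3f]$, not $[u_3c,v_3e]$ and $[u_4a,v_4v_3e]$; your own computation in fact produces the $f$-version, and the parenthetical appeal to $v_4v_3e\,\RRR'\,v_4v_3f$ cannot replace the required equality).

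The genuine gap is the step you yourself flag as the main obstacle: the claim $L'_{u_2u_1b}=L'_{u_4b}$. Your proposed method --- expressing both sides as iterated meets of $L'_b,L'_c,L'_d,L'_e$ using the four meet equations, (M2) and meet associativity/commutativity --- cannot succeed. The meet datum attached to $u_2$ is $L'_{v_1d}\wedge L'_e=L'_{u_2v_1d}$, and (M2) only allows you to multiply such an equation on the \emph{right}; there is no way to pass from information about $u_2v_1d$ to information about $u_2u_1b=u_2v_1c$, nor are $L'_{v_1d}$ and $L'_{u_3c}$ themselves meets of $L'_b,L'_c,L'_d,L'_e$ (in the quotient semigroup they correspond to domains of products such as $c^{-1}de^{-1}f$). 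What is actually needed is a transfer principle of the form: if $c\,\RRR'\,d$ and $L'_{xd}\wedge L'_{yd}=L'_{zd}$ then $L'_{xc}\wedge L'_{yc}=L'_{zc}$. This is true and provable from (M1), (M3), (M4) and Lemma~\ref{secprops}(iii),(iv), but it requires its own argument (prove first that the preorder $\leq_l$ transfers, then sandwich the meet), and with it one can indeed finish: $L'_{u_2v_1d}=L'_{v_1d}\wedge L'_{u_3d}$ by Lemma~\ref{secprops}(iii), transfer this along $c\,\RRR'\,d$ to get $L'_{u_2v_1c}=L'_{v_1c}\wedge L'_{u_3c}=L'_b\wedge L'_c\wedge L'_{u_3c}=L'_b\wedge L'_{u_3c}=L'_{u_4b}$. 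So your route can be repaired, but as written the decisive lemma is neither stated nor proved, and the tools you cite do not suffice; alternatively, adopting the paper's coordinated choice of witnesses avoids the issue entirely.
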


\begin{proof}
Let $[a,b],[c,d],[e,f] \in P$.

Applying Property (M1) to $b$ and $c$, we choose $u,v \in S$ satisfying
\begin{equation}
u \, \RRR' \, v \, \RRR' \, {vc = ub} \, \text{ and } \, L'_{b} \wedge L'_{c} = L'_{ub}. 
\end{equation}
This gives us that $[a,b][c,d] = [ua,vd]$.
Similarly, we choose $p,q \in S$ satisfying
\begin{equation}
p \, \RRR' \, q \, \RRR' \, qe = pd \, \text{ and } \, L'_{d} \wedge L'_{e} = L'_{pd}. 
\end{equation}
Then $[c,d][e,f] = [pc,qf]$.

Applying Property (M1) to $v$ and $p$, we know that there exists
$i,j \in S$ such that
\begin{equation}
i \, \RRR' \, j \, \RRR' \, {jp = iv} \, \text{ and } \, {L'_{v} \wedge L'_{p} = L'_{iv}}.   
\end{equation}

We want to prove that
\begin{equation} \label{Ass1}
([a,b][c,d])[e,f] = [ua,vd][e,f] = [i(ua),(jq)f],
\end{equation}
and that
\begin{equation} \label{Ass2}
[a,b]([c,d][e,f]) = [a,b][pc,qf] = [(iu)a,j(qf)].
\end{equation}
This would prove associativity.

In order to prove (\ref{Ass1}), we need
\begin{equation}\label{(a3)}
i \, \RRR' \, jq \, \RRR' \, {jqe = ivd}
\end{equation}
and
\begin{equation}\label{(b3)}
L'_{vd} \wedge L'_{e} = L'_{ivd}.
\end{equation}
We start by proving each relation in Equation (\ref{(a3)}).

Since $\RRR'$ is a left congruence, $q \, \RRR' \, p$ implies that
$jq \, \RRR' \, jp$, which in turn is $\RRR'$-related to $i$.
Using again the left compatibility of $\RRR'$, we see that
$q \, \RRR' \, qe$ implies that $jq \, \RRR' \, jqe$.
Also, using $qe = pd$ and $jp = iv$, we see that $jqe = jpd = ivd$.

We now prove Equation (\ref{(b3)}).
We apply (M2) to
$L'_{v} \wedge L'_{p} = L'_{iv}$ to give us that
$L'_{vd} \wedge L'_{pd} = L'_{ivd}$.
And so, using $L'_{d} \wedge L'_{e} = L'_{pd}$ and
Lemma \ref{secprops} (iii), we have
\begin{equation*}
\begin{split}
L'_{vd} \, \wedge \, L'_{pd} = L'_{ivd}
&\implies L'_{vd} \, \wedge \, (L'_{d} \, \wedge \, L'_{e}) = L'_{ivd}\\
&\implies (L'_{vd} \, \wedge \, L'_{d}) \, \wedge \, L'_{e} = L'_{ivd}\\
&\implies L'_{vd} \, \wedge\,  L'_{e} = L'_{ivd}.
\end{split}
\end{equation*}
We now have proved both (\ref{(a3)}) and (\ref{(b3)}),
which together gives us (\ref{Ass1}).

In order to prove (\ref{Ass2}), we need
\begin{equation}\label{(c3)}
iu \, \RRR' \, j \, \RRR' \, {jpc = iub}
\end{equation}
and
\begin{equation}\label{(d3)}
L'_{b} \wedge L'_{pc} = L'_{iub}.
\end{equation}
We start by proving each relation in Equation (\ref{(c3)}).

Since $\RRR'$ is a left congruence, $u \, \RRR' \, v$ implies that
$iu \, \RRR' \, iv$, which in turn is $\RRR'$-related to $j$.
Using again the left compatibility of $\RRR'$, we see that
$c \, \RRR' \, d$ implies that $jpc \, \RRR' \, jpd$
and $p \, \RRR' \, pd$ implies that $jp \, \RRR' \, jpd$.
Therefore $j \, \RRR' \, jp \, \RRR' \, jpd \, \RRR' \, jpc$.
Also, using $ub = vc$ and $iv = jp$, we see that $iub = ivc = jpc$.

We now prove Equation (\ref{(d3)}).
We apply (M2) to
$L'_{v} \wedge L'_{p} = L'_{iv}$ to give us that
$L'_{vc} \wedge L'_{pc} = L'_{ivc}$.
And so, using $L'_{b} \wedge L'_{c} = L'_{vc}$ and
Lemma \ref{secprops} (iii), we have
\begin{equation*}
\begin{split}
L'_{vc} \, \wedge \, L'_{pc} = L'_{ivc}
&\implies (L'_{b} \, \wedge \, L'_{c}) \, \wedge L'_{pc} = L'_{iub}\\
&\implies L'_{b} \, \wedge \, (L'_{c} \, \wedge \, L'_{pc}) = L'_{iub}\\
&\implies L'_{b} \, \wedge \, L'_{pc} = L'_{iub}.
\end{split}
\end{equation*}
We have now proved both (\ref{(c3)}) and (\ref{(d3)}),
which together gives us (\ref{Ass2}), finishing the proof.
\end{proof}

We have now proved that $P$ is a semigroup.
The next lemma provides a couple of
useful shortcuts to help in later stages.

\begin{lemma}\label{extras}
These following statements are true in $P$:
\begin{enumerate}[(i)]
    \item $[a,a] = [b,b]$ if and only if $a  \, \LLL' \, b$;
    \item $[a,b][b,a] = [a,a]$.
\end{enumerate}
\end{lemma}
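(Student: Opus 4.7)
The plan is to treat the two parts separately, with part (ii) drawing on part (i).

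For part (i), the forward direction is immediate from unpacking $\sim$. If $[a,a] = [b,b]$, there exist $x,y \in S$ witnessing $(a,a) \sim (b,b)$; in particular $x \, \RRR' \, xa \, \LLL' \, a$, $y \, \RRR' \, yb \, \LLL' \, b$ and $xa = yb$, which chains to $a \, \LLL' \, xa = yb \, \LLL' \, b$. For the converse, assume $a \, \LLL' \, b$ and apply (M1) to the pair $(a,b)$ to obtain $\gamma,\delta \in S$ with $\gamma \, \RRR' \, \delta \, \RRR' \, \delta b = \gamma a$ and $L'_a \wedge L'_b = L'_{\gamma a}$. Since $a \, \LLL' \, b$, the meet collapses to $L'_a = L'_b = L'_{\gamma a}$, so $\gamma \, \RRR' \, \gamma a \, \LLL' \, a$ and $\delta \, \RRR' \, \delta b \, \LLL' \, b$. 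Taking $x = \gamma$ and $y = \delta$ then supplies the witnesses required for $(a,a) \sim (b,b)$.

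For part (ii), the first task is to identify admissible $u,v$ in the multiplication rule for $[a,b][b,a]$. By Lemma \ref{secprops}(i) applied to $b$, there exists $u \in S$ with $u \, \RRR' \, ub \, \LLL' \, b$. Setting $v = u$, one checks immediately $u \, \RRR' \, v \, \RRR' \, vb = ub$, and $L'_b \wedge L'_b = L'_b = L'_{ub}$, so this choice is legitimate. The multiplication rule then gives
\begin{equation*}
[a,b][b,a] = [ua, va] = [ua, ua].
\end{equation*}

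It remains to show $[ua, ua] = [a,a]$. By part (i) this reduces to proving $ua \, \LLL' \, a$. Here the key is Lemma \ref{secprops}(ii): from $u \, \RRR' \, ub \, \LLL' \, b$ together with $b \, \RRR' \, a$ (which holds because $(a,b) \in \Sigma$ forces $a \, \RRR' \, b$), we conclude $u \, \RRR' \, ua \, \LLL' \, a$, so in particular $ua \, \LLL' \, a$. Part (i) then delivers $[ua,ua] = [a,a]$.

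The only delicate point is selecting the witnesses for the multiplication so that $u = v$ is legal; once that simplification is available, both claims follow essentially from (M1) and Lemma \ref{secprops}. No separate verification of well-definedness is needed because Lemma \ref{WD} already ensures the product does not depend on the choice of $u,v$.
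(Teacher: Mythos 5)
Your proof is correct and follows essentially the same route as the paper: part (i) is handled by unpacking $\sim$ in one direction and applying (M1) to $a,b$ (with the meet collapsing via $a\,\LLL'\,b$) in the other, and part (ii) chooses $u=v$ via Lemma \ref{secprops}(i), computes $[a,b][b,a]=[ua,ua]$, and reduces to (i) using Lemma \ref{secprops}(ii).
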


\begin{proof} $(i)$  We know that $[a,a] = [b,b]$
if and only if there exists $w,z \in S$ such that
\begin{equation} \label{aabb}
wa=zb, \; w \, \RRR' \, wa \, \LLL' \, a\mbox{ and } z \, \RRR' \, zb \, \LLL' \, b.
\end{equation}
Let $[a,a] = [b,b]$. Therefore there exists $w,z \in S$ satisfying (\ref{aabb}).
Hence $a \, \LLL' \, wa = zb \, \LLL' \, b$.

Conversely let $a  \, \LLL' \, b$.
Applying (M1) to $a$ and $b$, there exists $w, z \in S$ such that
$w \, \RRR' \, z \, \RRR' \, {zb = wa}$ and
${L'_{a} \wedge L'_{b} = L'_{wa}}$
Therefore, since $a  \, \LLL' \, b$,
we have $ L'_{wa} = L'_{a} \wedge L'_{b} = L'_{a}$, and consequently
$zb = w a\, \LLL' \, a \, \LLL' \, b$.
Comparing with (\ref{aabb}), we see that ${[a,a] = [b,b]}$.

$(ii)$ By Lemma \ref{secprops} (i), there exists $y \in S$ such that
$y \, \RRR' \, yb \, \LLL' \, b$.
By comparing with the definition of multiplication
in Roadmap \ref{roadmap}, we see that ${[a,b][b,a] = [ya,ya]}$.
Since $a \, \RRR' \, b$,
Lemma \ref{secprops} (ii) gives us that $\, y \, \RRR' \, ya \, \LLL' \, a$.
So by (i), $[a,b][b,a] = [a,a]$.
\end{proof}

\begin{lemma}\label{regular}
The semigroup $P$ is regular.
\end{lemma}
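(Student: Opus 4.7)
The plan is to exhibit, for each element $[a,b]\in P$, a concrete generalised inverse. The natural candidate is $[b,a]$, which belongs to $P$ because $\RRR'$ is symmetric. The goal then reduces to verifying
\[
[a,b]\,[b,a]\,[a,b]=[a,b].
\]
By Lemma \ref{extras}(ii) we already have $[a,b][b,a]=[a,a]$, so the task collapses to showing that $[a,a][a,b]=[a,b]$.

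To compute $[a,a][a,b]$ via the multiplication rule in Roadmap \ref{roadmap}, I need $u,v\in S$ satisfying $u\,\RRR'\,v\,\RRR'\,va=ua$ together with $L'_{a}\wedge L'_{a}=L'_{ua}$; the latter just says $ua\,\LLL'\,a$. Lemma \ref{secprops}(i) supplies an $x\in S$ with $x\,\RRR'\,xa\,\LLL'\,a$, so taking $u=v=x$ works and yields
\[
[a,a][a,b]=[xa,xb].
\]

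The remaining and decisive step is to verify $(xa,xb)\sim(a,b)$. I would apply (M1) to the pair $xa,\,a$ to obtain $p,q\in S$ with $p\,\RRR'\,q\,\RRR'\,qa=p(xa)$ and $L'_{xa}\wedge L'_{a}=L'_{p(xa)}$. Since $xa\,\LLL'\,a$, the meet on the left equals $L'_{a}$, so $p(xa)\,\LLL'\,xa\,\LLL'\,a$, and consequently also $qa\,\LLL'\,a$. From $p(xa)=qa$ and $a\,\RRR'\,b$, property (M4) promotes this equality to $p(xb)=qb$. The four conditions defining $\sim$ are now all in place: $p(xa)=qa$, $p(xb)=qb$, $p\,\RRR'\,p(xa)\,\LLL'\,xa$, and $q\,\RRR'\,qa\,\LLL'\,a$. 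Hence $[xa,xb]=[a,b]$, so $[a,b][b,a][a,b]=[a,b]$ and $P$ is regular.

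The computation is essentially bookkeeping; the only real subtlety is selecting witnesses whose $\RRR'$- and $\LLL'$-data line up with the asymmetric requirements of $\sim$, which is why (M1) applied to the pair $(xa,a)$ (rather than to $a,a$ directly) is the right move, and why (M4) must be invoked to transport the equality $p(xa)=qa$ across $\RRR'$ to the second coordinate.
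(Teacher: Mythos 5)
Your proof is correct and follows essentially the same route as the paper: reduce via Lemma \ref{extras}(ii) to showing $[a,a][a,b]=[a,b]$, compute $[a,a][a,b]=[xa,xb]$ using Lemma \ref{secprops}(i), and verify the $\sim$-equivalence by applying (M1) to the two first coordinates and (M4) to transfer the resulting equality to the second coordinates. The only (immaterial) difference is that you apply (M1) to the pair $(xa,a)$ and check $(xa,xb)\sim(a,b)$, whereas the paper applies it to $(a,ya)$ and checks $(a,b)\sim(ya,yb)$; by symmetry of $\sim$ these are the same argument.
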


\begin{proof}
Let $[a,b] \in P$.
By Lemma \ref{extras} (ii), $[a,b][b,a][a,b] = [a,a][a,b]$.
By Lemma \ref{secprops} (i), there exists $y \in S$ such that
$y \, \RRR' \, ya \, \LLL' \, a$.
Therefore, by our definition of multiplication,
$[a,a][a,b] = [ya,yb]$.

We want to prove that $(a,b) \sim (ya,yb)$. That is there exists $w,z \in S$ such that
\begin{equation*}
wa=zya, \, wb=zyb, \; w \, \RRR' \, wa \, \LLL' \, a\mbox{ and } z \, \RRR' \, zya \, \LLL' \, ya.
\end{equation*}
Applying Property (M1) to $a$ and $ya$, there exists $w,z \in S$ such that
\begin{equation*}
w \, \RRR' \, z \, \RRR' \, {zya = wa} \, \text{ and } \, {L'_{a} \wedge L'_{ya} = L'_{wa}}.
\end{equation*}
We use (M4) to give us that $wa = zya$ implies $wb=zyb$.
We can also use $a \, \LLL' \, ya$ to give us that
$L'_{wa} = L'_{a} \, \wedge \, L'_{ya} = L'_{a}$.
Therefore ${a \, \LLL' \, ya \, \LLL' \, wa = zya}$.

So we have that $[a,b][b,a][a,b] = [a,b]$. Therefore $P$ is regular.
\end{proof}

It is good to note that in the exactly same way $[b,a][a,b][b,a] = [b,a]$.
Therefore $[b,a] \in V([a,b])$.

\begin{lemma}\label{Inv}
The semigroup $P$ is an inverse semigroup, with semilattice of idempotents 
\[E(P)=\{ [a,a]:a\in S\}\]
and the inverse of $[a,b]$  given by $[a,b]^{-1} = [b,a]$.
\end{lemma}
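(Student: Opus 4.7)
I will invoke the classical fact that a regular semigroup is inverse precisely when its idempotents commute.  Since $P$ is regular by Lemma \ref{regular} and the remark after it already supplies $[b,a]\in V([a,b])$, uniqueness of inverses in any inverse semigroup will then force $[a,b]^{-1}=[b,a]$.  It therefore suffices to establish: (a) every $[a,a]$ is idempotent; (b) $[a,a][b,b]=[b,b][a,a]$; and (c) every idempotent of $P$ has the form $[a,a]$.

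Parts (a) and (b) are direct calculations with the multiplication rule and Lemma \ref{extras}(i).  For (a), applying (M1) to $(a,a)$ supplies $u\,\RRR'\,v\,\RRR'\,va=ua$ with $L'_a\wedge L'_a=L'_{ua}$, whence $ua\,\LLL'\,a$ and $[a,a][a,a]=[ua,va]=[ua,ua]=[a,a]$.  For (b), both $[a,a][b,b]$ and $[b,b][a,a]$ collapse under the multiplication rule to elements of the form $[c,c]$ whose first coordinate has $\LLL'$-class equal to $L'_a\wedge L'_b$, so a further appeal to Lemma \ref{extras}(i) identifies them.

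The main obstacle is (c).  Given $[a,b]^2=[a,b]$, I would first multiply by $[b,a]$ on the right and on the left in turn; using Lemma \ref{extras}(ii) and associativity (Lemma \ref{Ass}) this produces
\[[a,b][a,a]=[a,a] \quad\text{and}\quad [b,b][a,b]=[b,b].\]
Unpacking each equality through the multiplication rule and the definition of $\sim$, and transporting the witnessing $\LLL'$-relations along $a\,\RRR'\,b$ via Lemma \ref{secprops}(ii), I expect to extract $L'_a\leq L'_b$ and $L'_b\leq L'_a$, hence $a\,\LLL'\,b$.  Expanding $[a,b]^2=[a,b]$ itself then yields $u,v,x,y\in S$ with $xa=yua$ and $xb=yvb$; converting the first equation via (M4) into $xb=yub$ forces $yub=yvb$.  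A first application of (M6) to $(ub,vb,y)$, whose $\LLL'$-hypotheses follow from $a\,\LLL'\,b$ together with Lemma \ref{secprops}(ii), yields $ub=vb$; a second use of (M4) extracts $ua=va$, so $xa=yua=yvb=xb$, and a final application of (M6) to $(a,b,x)$ delivers $a=b$.  Hence $[a,b]=[a,a]$, completing the identification of $E(P)$ and therefore the proof.
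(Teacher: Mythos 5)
Your overall strategy coincides with the paper's: $P$ is regular with $[b,a]\in V([a,b])$ (Lemma~\ref{regular} and the remark after it), so everything reduces to identifying $E(P)$ and showing idempotents commute, and your parts (a) and (b) are fine. The weak point is part (c), precisely at the step you flag with ``I expect to extract''. Unpack your two auxiliary equations. For $[a,b][a,a]=[a,a]$ the product is $[ua,va]$, where $u\,\RRR'\,v\,\RRR'\,va=ub$ and $L'_b\wedge L'_a=L'_{ub}$; the $\sim$-witnesses $x,y$ give $x(ua)=ya$ with $x(ua)\,\LLL'\,ua$ and $ya\,\LLL'\,a$, hence $ua\,\LLL'\,a$, and transporting along $a\,\RRR'\,b$ via Lemma~\ref{secprops}(ii) gives $ub\,\LLL'\,b$, so $L'_a\wedge L'_b=L'_{ub}=L'_b$, i.e.\ $b\leq_l a$. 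For $[b,b][a,b]=[b,b]$ the product is $[ub,vb]$ with exactly the same data, and the witnesses now give $ub\,\LLL'\,b$ directly; transporting along $b\,\RRR'\,a$ only turns this into $ua\,\LLL'\,a$, which says nothing about the meet. So the mechanism you describe produces $L'_b\leq L'_a$ twice and never $L'_a\leq L'_b$: as written, your step one does not deliver $a\,\LLL'\,b$, and both of your applications of (M6) rely on it. The missing inequality is in fact recoverable, but you need more than Lemma~\ref{secprops}(ii): from the second witness equation $x(vb)=yb\,\LLL'\,b$ and (M3) (which gives $x(vb)\leq_l vb\leq_l b$) you get $vb\,\LLL'\,b$, then Lemma~\ref{secprops}(ii) gives $va\,\LLL'\,a$, and $va=ub$ forces $L'_a\wedge L'_b=L'_a$. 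With that repair your two-step (M6) argument does go through (I checked that, granted $a\,\LLL'\,b$, your chain $ub=vb$, $ua=va$, $xa=xb$, $a=b$ is valid).

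You should also be aware that the whole detour is unnecessary. The paper works directly with $(a,b)\sim(ua,vb)$: if $w,z$ are the witnesses, so $wa=zua$ and $wb=zvb$ with $w\,\RRR'\,wa\,\LLL'\,a$ and $z\,\RRR'\,zua\,\LLL'\,ua$, then two applications of (M4) together with $va=ub$ give $wa=zva=zub=wb$ at once; Lemma~\ref{secprops}(ii) gives $wb\,\LLL'\,b$, and a single application of (M6) (with $\gamma=w$) yields $a=b$, with no prior need for $a\,\LLL'\,b$ and only one use of (M6).
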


\begin{proof}
We start by identifying the idempotents of $P$.
For any $[a,a]\in P$ it is easy to see, using (M1), that
$[a,a]\in E(P)$.

Let $[a,b] \in P$ be an idempotent, i.e. let $[a,b][a,b]=[a,b]$.
We know that ${[a,b][a,b] = [ua,vb]}$,
where $u$ and $v$ are the elements that exists by (M1) such that
\begin{equation*}
u \, \RRR' \, v \, \RRR' \, va = ub \, \text{ and }
\,{L'_{a} \wedge L'_{b} = L'_{ub}}\, .
\end{equation*}
Consequently we know that $(a,b) \sim (ua,vb)$. Therefore there exists $w,z \in S$ such that
\begin{equation*}
wa=zua, \, wb=zvb, \; w \, \RRR' \, wa \, \LLL' \, a, \, z \, \RRR' \, zua \, \LLL' \, ua.
\end{equation*}
By Lemma \ref{secprops} (ii) we have that
$a \, \RRR' \, b$ implies that
$w \, \RRR' \, wb \, \LLL' \, b$.
Also, by applying (M4) to both $wb=zvb$ and $wa=zua$
and using $va = ub$, we have
\begin{equation*}
wa \overset{(M4)}{=} zva = zub \overset{(M4)}{=} wb.
\end{equation*}
Therefore $a \, \LLL' \, wa = wb \, \LLL' \, b$. 
We then apply Property (M6) to give us $a=b$.
Therefore $E(P)$  is as given in the statement. 

We now prove that idempotents commute.
Let $[a,a], [b,b]\in E(P)$. Applying Property (M1) to $a$ and $b$,
we choose $u$ and $v$ such that
${[a,a][b,b] = [ua,vb]}$, where
\begin{equation*}
u \, \RRR' \, v \, \RRR'\, vb = ua \, \text{ and } \,
{L'_{a} \wedge L'_{b} = L'_{ua}}.
\end{equation*}
By inspection we can see that $v$ and $u$ satisfy the necessary properties for
${[b,b][a,a] = [vb,ua]}$.
And so we see that, since $ua = vb$, we have
\begin{equation*}
[a,a][b,b] = [ua,vb] = [vb,ua] = [b,b][a,a].
\end{equation*}
Therefore the idempotents of $P$ commute.
Since $P$ is also regular, this means that $P$ is an inverse semigroup.

Moreover since $[b,a] \in V([a,b])$, we easily see that $[a,b]^{-1} = [b,a]$ for all ${[a,b] \in P}$.
\end{proof}

We now prove that $S$ embeds into $P$.
We do this by defining a function $\phi :S \rightarrow P$, by
$a \phi = [x,xa]$, where $x$ is the element such that $x \, \RRR' \, xa \, \LLL' \, a$,
that exists by Lemma \ref{secprops} (i).
Note that $[x,xa] \in P$.

\begin{lemma}\label{EB}
The function $\phi$ is an embedding.
\end{lemma}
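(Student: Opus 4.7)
The plan is to verify three things in sequence: that $\phi$ is well-defined (independent of the choice of witness $x$), that $\phi$ is a homomorphism, and that $\phi$ is injective. For well-definedness, suppose $x$ and $x'$ both satisfy $x \, \RRR' \, xa \, \LLL' \, a$ and $x' \, \RRR' \, x'a \, \LLL' \, a$. Applying (M1) to $(x,x')$ gives $w,z \in S$ with $w \, \RRR' \, z \, \RRR' \, wx = zx'$ and $L'_x \wedge L'_{x'} = L'_{wx}$. Since $x \, \LLL' \, a \, \LLL' \, x'$, the meet collapses to $L'_x = L'_{x'}$, yielding $w \, \RRR' \, wx \, \LLL' \, x$ and $z \, \RRR' \, zx' \, \LLL' \, x'$; the equality $w(xa) = z(x'a)$ then follows from $wx = zx'$ by right multiplication by $a$. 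Hence $(x,xa) \sim (x',x'a)$.

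For the homomorphism property, fix witnesses $x$ for $a$ and $y$ for $b$, and let $u,v \in S$ come from (M1) applied to $(xa,y)$, so $u \, \RRR' \, v \, \RRR' \, vy = u(xa)$ and $L'_{xa} \wedge L'_y = L'_{uxa}$. Directly from the multiplication rule, $(a\phi)(b\phi) = [x,xa][y,yb] = [ux, v(yb)]$, and $v(yb) = (vy)b = (uxa)b = uxab$, so $(a\phi)(b\phi) = [ux, uxab]$. I then claim that $ux$ is a valid witness for $(ab)\phi$. The $\RRR'$-chain $ux \, \RRR' \, uxa = vy \, \RRR' \, vyb = uxab$ comes from the left compatibility of $\RRR'$ combined with $x \, \RRR' \, xa$ and $y \, \RRR' \, yb$. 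For the $\LLL'$-part, apply (M2) to $L'_{xa} \wedge L'_y = L'_{uxa}$ with $\delta = b$ to get $L'_{xab} \wedge L'_{yb} = L'_{uxab}$; the right congruence of $\LLL'$ applied to $xa \, \LLL' \, a$ and $yb \, \LLL' \, b$ then reduces the left side to $L'_{ab} \wedge L'_b$, which by (M3) (giving $ab \leq_l b$) is simply $L'_{ab}$. Hence $uxab \, \LLL' \, ab$, and by well-definedness of $\phi$, $(ab)\phi = [ux, uxab] = (a\phi)(b\phi)$.

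For injectivity, suppose $a\phi = b\phi$, so $[x,xa] = [y,yb]$. Unpacking $\sim$ produces $w,z \in S$ with $wx = zy$, $w(xa) = z(yb)$, $w \, \RRR' \, wx \, \LLL' \, x$, and $z \, \RRR' \, zy \, \LLL' \, y$. Since $wx = zy$, we have $wxa = (zy)a$ and $zyb = (zy)b$, so $wxa = wxb$. The right congruence of $\LLL'$, noted in Roadmap~\ref{roadmap}, applied to $wx \, \LLL' \, x$ gives $wxa \, \LLL' \, xa \, \LLL' \, a$, while $wx = zy \, \LLL' \, y$ gives $wxb \, \LLL' \, yb \, \LLL' \, b$. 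Combining these with $wxa = wxb$ yields $a \, \LLL' \, b \, \LLL' \, (wx)a = (wx)b$, and (M6) with $\gamma = wx$ forces $a = b$.

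I expect the main obstacle to be the bookkeeping in the homomorphism step, and in particular the identification $L'_{uxab} = L'_{ab}$: one must invoke (M2) to push the meet $L'_{xa} \wedge L'_y = L'_{uxa}$ past right multiplication by $b$, use the right congruence of $\LLL'$ to replace $xa$ and $yb$ by $a$ and $b$, and finally absorb the superfluous factor $L'_b$ via the (M3)-inequality $ab \leq_l b$. The well-definedness and injectivity steps, once the key applications of (M1) and (M6) are spotted, are comparatively routine.
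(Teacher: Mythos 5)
Your homomorphism and injectivity arguments are correct and follow essentially the same route as the paper's proof. The problem is in the well-definedness step: to collapse the meet $L'_x \wedge L'_{x'}$ you assert $x \, \LLL' \, a \, \LLL' \, x'$, but this does not follow from the defining property $x \, \RRR' \, xa \, \LLL' \, a$ of a witness. That property only gives $xa \, \LLL' \, a$; in the model situation, by Lemma~\ref{box} it corresponds to $x \, \LLL^Q \, a^{-1}$, not to $x \, \LLL^Q \, a$, and the claim is false in general. For instance, in the bicyclic monoid $\mathcal{B}$ with $S = \{(0,n) \mid n \in \mathbb{N}^0\}$ and $a = (0,n)$, the only witness is $x = (0,0)$, which is not $\LLL$-related to $a$ when $n > 0$.

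The conclusion you need, namely $x \, \LLL' \, x'$, is nevertheless true, and it is exactly what Condition (M5) supplies: take $\alpha = \beta = a$ (so $\alpha \, \RRR' \, \beta$ by reflexivity of $\RRR'$), $\gamma = x$ and $\delta = x'$; the hypotheses $x \, \RRR' \, xa \, \LLL' \, a$ and $x' \, \RRR' \, x'a \, \LLL' \, a$ then yield $x \, \LLL' \, x'$, which is how the paper argues. Once this is in place, your computation $L'_{wx} = L'_x \wedge L'_{x'} = L'_x = L'_{x'}$ and the remainder of the well-definedness step (and of the whole proof) go through as written. So the defect is a single incorrect justification, easily repaired by invoking (M5), rather than a structural flaw; but as stated the step does not stand.
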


\begin{proof} Again we must proceed in stages.

\textbf{Well-defined}:
Let $x \, \RRR' \, xa \, \LLL' \, a$ and let $y \, \RRR' \, ya \, \LLL' \, a$.
By our definition, this means that ${a \phi = [x,xa]}$ and that ${a \phi = [y,ya]}$.
Therefore, in order to prove that $\phi$ is well-defined, we need to prove that
$(x,xa) \sim (y,ya)$.
This is true exactly if there exists $w,z \in S$ such that
\begin{equation*}
wx=zy, \, wxa=zya, \; w \, \RRR' \, wx \, \LLL' \, x\mbox{ and } z \, \RRR' \, zy \, \LLL' \, y.
\end{equation*}
Applying Property (M1) to $x$ and $y$, we take $w$
and $z$ to be elements in $S$ such that
${w \, \RRR' \, z \, \RRR' \, wx = zy}$ and ${L'_{x} \wedge L'_{y} = L'_{wx}} \,$.
Trivially, $wxa = zya$.
Using (M5),
$x \, \RRR' \, xa \, \LLL' \, a$ and $y \, \RRR' \, ya \, \LLL' \, a$
implies that $x \, \LLL' y$.
Therefore
\begin{equation*}
L'_{wx} = L'_{x} \wedge L'_{y} = L'_{x}.
\end{equation*}
For the last necessary property, we notice
$y \, \LLL' \, x \, \LLL' \, wx = zy$.

\textbf{Homomorphism}:
Let $a,b \in S$, and let $x,y \in S$ such that
$x \, \RRR' \, xa \, \LLL' \, a$ and
$y \, \RRR' \, yb \, \LLL' \, b$.
Therefore, by definition,
$a \phi = [x,xa]$ and $b \phi = [y,yb]$. Then
\begin{equation*}
(a \phi) (b \phi) = [x,xa][y,yb] = [ux,vyb] = [ux,uxab],
\end{equation*}
where $u$ and $v$ are the elements that exist by (M1) such that
\begin{equation*}
u \, \RRR' \, v \, \RRR' \, vy = uxa \, \text{ and }
\,{L'_{xa} \wedge L'_{y} = L'_{uxa}}.
\end{equation*}
We want to prove that this is equal to $(ab) \phi$.

Using the fact that $\RRR'$ is a left conguence,
we have that $yb \, \RRR' \, y$ implies that $vyb \, \RRR' \, vy$,
and $xa \, \RRR' \, x$ implies that $uxa \, \RRR' \, ux$.
Therefore
\begin{equation*}
uxab = vyb \, \RRR' \, vy = uxa \, \RRR' \, ux. 
\end{equation*}
We use $xa \, \LLL' \, a$ to obtain ${L'_{a} \wedge L'_{y} = L'_{uxa}}$.
We can then apply Property (M2) to
${L'_{a} \wedge L'_{y} = L'_{uxa}}$
to give us that ${L'_{ab} \wedge L'_{yb} = L'_{uxab}}$.
Using $yb \, \LLL' \, b$, this means that
${L'_{ab} \wedge L'_{b} = L'_{uxab}}$.
We can then apply Lemma \ref{secprops} (iii) to give us $ab \, \LLL' \, uxab$.

By the definition of $\phi$, since $ux \, \RRR' \, uxab \, \LLL' \, ab$,
this means that
\begin{equation*}
(ab)\phi = [ux,uxab] = (a \phi) (b \phi).   
\end{equation*}

\textbf{Injective}:
Let $a,b \in S$ such that $a \phi = b \phi$.
Therefore, choosing $x$ and $y$ such that
$x \, \RRR' \, xa \, \LLL' \, a$ and $y \, \RRR' \, yb \, \LLL' \, b$,
we have that $[x,xa] = [y,yb]$.
This means there exists $w,z \in S$
such that
\begin{equation*}
wx=zy, \, wxa=zyb, \; w \, \RRR' \, wx \, \LLL' \, x
\text{ and } z \, \RRR' \, zy \, \LLL' \, y.
\end{equation*}
Therefore, using the fact that $\LLL'$ is a right congruence,
we have that $x \, \LLL' \, wx$ implies that $xa \, \LLL' \, wxa$.
Consequently, $a \, \LLL' \, xa \, \LLL' \, wxa$.
Similarly, $y \, \LLL' \, zy$ implies that $yb \, \LLL' \, zyb$.
And so, $b \, \LLL' \, yb \, \LLL' \, zyb = wxb$,
using $zy = wx$ in the last equality.
Therefore, we can apply Property (M6) giving us that
$a \, \LLL' \, wxa = wxb \, \LLL' \, b$ implies that $a=b$.
\end{proof}

\begin{lemma}\label{Rproof}
Let $a,b \in S$. Then $a \, \RRR' \, b$ if and only if $a \phi \, \RRR^P \, b \phi$.
\end{lemma}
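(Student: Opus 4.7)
The plan is to use the standard characterisation of $\RRR$ in an inverse semigroup: two elements are $\RRR$-related if and only if they have the same right idempotent $xx^{-1}$. Since $P$ has been shown to be an inverse semigroup with $[a,b]^{-1}=[b,a]$ (Lemma \ref{Inv}), the criterion $a\phi \,\RRR^P\, b\phi$ reduces to an equality of idempotents in $E(P)$, which by Lemma \ref{extras} translates back into an $\LLL'$-relation inside $S$.

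More concretely, I fix $x,y\in S$ with $x\,\RRR'\,xa\,\LLL'\,a$ and $y\,\RRR'\,yb\,\LLL'\,b$ (which exist by Lemma \ref{secprops}(i)), so that $a\phi=[x,xa]$ and $b\phi=[y,yb]$. Applying Lemma \ref{extras}(ii) I compute
\[
(a\phi)(a\phi)^{-1} = [x,xa][xa,x] = [x,x]
\quad\text{and}\quad
(b\phi)(b\phi)^{-1} = [y,yb][yb,y] = [y,y].
\]
Since $P$ is inverse, $a\phi\,\RRR^P\,b\phi$ if and only if $[x,x]=[y,y]$, which by Lemma \ref{extras}(i) is equivalent to $x\,\LLL'\,y$.

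Having reduced the statement to the equivalence ``$x\,\LLL'\,y \iff a\,\RRR'\,b$'', I invoke Property (M5) directly: with $\gamma=x$, $\alpha=a$, $\delta=y$, $\beta=b$, the hypotheses $x\,\RRR'\,xa\,\LLL'\,a$ and $y\,\RRR'\,yb\,\LLL'\,b$ are exactly the standing assumptions of (M5), and its conclusion states precisely that $x\,\LLL'\,y$ if and only if $a\,\RRR'\,b$. Combining the two equivalences completes the proof.

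I do not expect any real obstacle: both directions collapse to the same chain of equivalences once the idempotent $(a\phi)(a\phi)^{-1}$ is computed via Lemma \ref{extras}(ii). The only subtlety is recognising that the content of (M5) has been designed precisely to make this argument work, so the proof is essentially a bookkeeping exercise chaining Lemma \ref{Inv}, Lemma \ref{extras}, and (M5).
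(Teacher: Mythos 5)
Your proof is correct and follows essentially the same route as the paper: compute $(a\phi)(a\phi)^{-1}=[x,x]$ and $(b\phi)(b\phi)^{-1}=[y,y]$ via Lemma \ref{Inv} and Lemma \ref{extras}(ii), reduce $\RRR^P$-relatedness to $[x,x]=[y,y]$, translate this to $x\,\LLL'\,y$ by Lemma \ref{extras}(i), and finish with (M5). No differences worth noting.
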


\begin{proof}
We have already proved that $P$ is an inverse semigroup,
so $a \phi \, \RRR^P \, b \phi$ if and only if
${(a \phi)(a \phi)^{-1} = (b \phi)(b \phi)^{-1}}$.

Let $x \, \RRR' \, xa \, \LLL' \, a$
and $y \, \RRR' \, yb \, \LLL' \, b$,
so that $a \phi = [x,xa]$ and $b \phi = [y,yb]$.
Then $(a \phi)(a \phi)^{-1} = [x,xa][xa,x] = [x,x]$, by Lemmas
\ref{Inv} and \ref{extras} (ii).
Similarly $(b \phi)(b \phi)^{-1} = [y,yb][yb,b] = [y,y]$.

Therefore
$a \phi \, \RRR^P \, b \phi$ if and only if $(x,x) \sim (y,y)$,
which is true if and only if
$x \, \LLL' \, y$, using Lemma \ref{extras} (i).
We then use (M5) to give us that 
this is equivalent to $a \, \RRR' \, b$.
\end{proof}

\begin{lemma}\label{Lproof}
Let $a,b \in S$. Then $a \leq_l b$ if and only if
$a \phi \leq_{\LLL^P} b \phi$.
\end{lemma}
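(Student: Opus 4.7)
The plan is to translate the condition $a\phi \leq_{\LLL^P} b\phi$ into a statement about products of idempotents in $P$, using that $P$ is an inverse semigroup (Lemma~\ref{Inv}). Specifically, $a\phi \leq_{\LLL^P} b\phi$ holds if and only if $(a\phi)^{-1}(a\phi) \cdot (b\phi)^{-1}(b\phi) = (a\phi)^{-1}(a\phi)$, so the task reduces to computing these two idempotents of $P$ and testing their meet.

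First I would compute $(a\phi)^{-1}(a\phi)$ and $(b\phi)^{-1}(b\phi)$ explicitly. Choose $x \in S$ with $x \, \RRR' \, xa \, \LLL' \, a$ and $y \in S$ with $y \, \RRR' \, yb \, \LLL' \, b$, so that $a\phi = [x,xa]$ and $b\phi = [y,yb]$. By Lemma~\ref{Inv}, $(a\phi)^{-1} = [xa, x]$, and Lemma~\ref{extras}(ii) gives $[xa,x][x,xa] = [xa, xa]$. Since $xa \, \LLL' \, a$, Lemma~\ref{extras}(i) yields $[xa,xa] = [a,a]$. Analogously $(b\phi)^{-1}(b\phi) = [b,b]$. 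Hence $a\phi \leq_{\LLL^P} b\phi$ if and only if $[a,a][b,b] = [a,a]$ in $P$.

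Next I would evaluate $[a,a][b,b]$ using the rule of Roadmap~\ref{roadmap}. By (M1) applied to $a$ and $b$, pick $u,v \in S$ with $u \, \RRR' \, v \, \RRR' \, vb = ua$ and $L'_a \wedge L'_b = L'_{ua}$. Then $[a,a][b,b] = [ua, vb] = [ua, ua]$, since $vb = ua$. Applying Lemma~\ref{extras}(i) once more, $[ua,ua] = [a,a]$ holds precisely when $ua \, \LLL' \, a$, that is, when $L'_{ua} = L'_a$.

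Finally I would unpack this: $L'_{ua} = L'_a \wedge L'_b$ by choice of $u,v$, so $[a,a][b,b] = [a,a]$ is equivalent to $L'_a \wedge L'_b = L'_a$, which is exactly the statement $a \leq_l b$ in the meet semilattice $S/\LLL'$. Chaining the equivalences gives the desired biconditional. The main ``obstacle'' is really just bookkeeping: the argument is a short chain of rewrites once the key simplifications $(a\phi)^{-1}(a\phi) = [a,a]$ and $[a,a][b,b] = [ua,ua]$ are in hand, both of which are immediate from the lemmas already established.
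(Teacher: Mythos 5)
Your proof is correct, and it takes a genuinely shorter route than the paper's. The paper tests the condition $a\phi = (a\phi)(b\phi)^{-1}(b\phi)$ directly: it computes $(a\phi)(b\phi)^{-1}(b\phi) = [x,xa][b,b] = [ux,uxa]$ and then must verify $(x,xa)\sim(ux,uxa)$ by producing explicit witnesses $w,z$ via (M1), invoking (M4) and Lemma~\ref{secprops} along the way. You instead use the equivalent idempotent formulation $a\phi \leq_{\LLL^P} b\phi$ if and only if $(a\phi)^{-1}(a\phi)(b\phi)^{-1}(b\phi) = (a\phi)^{-1}(a\phi)$ (valid since $P$ is inverse by Lemma~\ref{Inv}, and in effect just Lemma~\ref{Invmeet} applied to $P$ with $c=a$), which reduces everything to the identity $[a,a][b,b]=[a,a]$. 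Since equality of idempotent classes is completely characterised by Lemma~\ref{extras}(i) ($[a,a]=[b,b]$ if and only if $a\,\LLL'\,b$), no witness-chasing through $\sim$ is needed: the multiplication rule with (M1) gives $[a,a][b,b]=[ua,ua]$ with $L'_{ua}=L'_a\wedge L'_b$, and Lemma~\ref{extras}(i) converts the equation into $L'_a\wedge L'_b = L'_a$, i.e.\ $a\leq_l b$. Both arguments use only machinery already established at this point (Lemmas~\ref{Inv} and \ref{extras}, the multiplication rule, (M1)), so there is no circularity; what your version buys is brevity and a cleaner conceptual reduction to the semilattice of idempotents of $P$, at the mild cost of quoting the standard characterisation of $\leq_{\LLL}$ in an inverse semigroup via $q^{-1}q$ rather than the paper's more hands-on computation.
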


\begin{proof}
We have already proved that $P$ is an inverse semigroup,
so $a \phi \leq_{\LLL^P} b \phi$ if and only if
${a \phi = (a \phi)(b \phi)^{-1}(b \phi)}$.

Let $x \, \RRR' \, xa \, \LLL' \, a$ and $y \, \RRR' \, yb \, \LLL' \, b$,
so that $a \phi = [x,xa]$ and $b \phi = [y,yb]$.
Using Lemma \ref{extras}, we have
$(b \phi)^{-1}(b \phi) = [yb,y][y,yb] = [yb,yb] = [b,b]$.
Therefore
\begin{equation*}
(a \phi)(b \phi)^{-1}(b \phi) = [x,xa][b,b] = [ux,vb],
\end{equation*}
where $u$ and $v$ are the elements that exist by (M1) such that
\begin{equation*}
u \, \RRR' \, v \, \RRR' \, vb = uxa \,
\text{ and } \, {L'_{xa} \wedge L'_{b} = L'_{uxa}}.
\end{equation*}
Note that since $xa \, \LLL \, a$, this means that ${L'_{a} \wedge L'_{b} = L'_{uxa}}$.

We use $vb = uxa$ to give us that
$(a \phi)(b \phi)^{-1}(b \phi) = [ux,uxa]$.
It follows that  $a \phi \leq_{\LLL^P} b \phi$ if and only if
$(x,xa) \sim (ux,uxa)$, which is true exactly if there exists $w,z \in S$ such that
\begin{equation}\label{preasterix}
wx = zux, \, wxa = zuxa, \; w \, \RRR' \,  wx \, \LLL' \, x\mbox{ and }
z \, \RRR' \, zux \, \LLL' \, ux.
\end{equation}
We know that $x \, \RRR' \, xa$, and so $ux \, \RRR' \, uxa$ as $\RRR'$ is a left congruence.
Therefore we can use Lemma \ref{secprops} (ii)
to rewrite (\ref{preasterix}) to the equivalent expression (\ref{asterix}).
That is, $a \phi \leq_{\LLL^P} b \phi$ if and only if
there exists $w,z \in S$ such that
\begin{equation}\label{asterix}
wx = zux, \, wxa = zuxa, \; w \, \RRR' \,  wxa \, \LLL' \, xa\mbox{ and }
z \, \RRR' \, zuxa \, \LLL' \, uxa.
\end{equation}

Let $a \phi \leq_{\LLL^P} b \phi$, i.e. let $w$ and $z$ exist in $S$
such that (\ref{asterix}) is satisfied.
We see that $uxa \, \LLL' \, zuxa = wxa \, \LLL' \, xa \, \LLL' \, a$.
Therefore ${L'_{a} \wedge L'_{b} = L'_{uxa} = L'_{a}}$.
By definition this means that $a \leq_l b$.

On the other hand, let $a \leq_l b$. By definition
${L'_{a} \wedge L'_{b} = L'_{a}}$.
Therefore
\begin{equation*}
{L'_{uxa} = L'_{a} \wedge L'_{b} = L'_{a} = L'_{xa}}.
\end{equation*}
Applying Property (M1) to $xa$ and $uxa$, there exists $w,z \in S$ such that
\begin{equation*}
w \, \RRR' \, z \, \RRR' \, {zuxa = wxa} \text{ and } {L'_{uxa} \wedge L'_{xa} = L'_{wxa}}.
\end{equation*}
Using $x \, \RRR' \, xa$, we know that $zuxa = wxa$ implies that $zux = wx$ by (M4).
Using the fact that $uxa \, \LLL' \, xa$, we see that
$L'_{wxa} = L'_{uxa} \wedge L'_{xa} = L'_{xa}$.
Therefore $uxa \, \LLL' \, xa \, \LLL' \, wxa = zuxa$.
This gives us (\ref{asterix}), and so
$a \phi \leq_{\LLL^P} b \phi$.
\end{proof}

\begin{lemma}\label{ssliq}
The semigroup $P$ is a semigroup of straight left I-quotients of $S \phi$.
\end{lemma}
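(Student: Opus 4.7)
The plan is to show that an arbitrary element $[a,b] \in P$ factors as $(a\phi)^{-1}(b\phi)$, with the straightness condition $a\phi \,\RRR^P\, b\phi$ then following immediately from the fact that $a \,\RRR'\, b$ (which is built into the definition of $\Sigma$) via Lemma~\ref{Rproof}. Together with Lemma~\ref{EB}, which tells us $S\phi$ is a subsemigroup of $P$, this suffices.

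The first move is to pick a single witness $x \in S$ that simultaneously computes both $a\phi$ and $b\phi$. By Lemma~\ref{secprops}(i) there is some $x$ with $x \,\RRR'\, xa \,\LLL'\, a$; since $a \,\RRR'\, b$, Lemma~\ref{secprops}(ii) then also delivers $x \,\RRR'\, xb \,\LLL'\, b$. Hence $a\phi = [x, xa]$, $b\phi = [x, xb]$, and by Lemma~\ref{Inv} we have $(a\phi)^{-1} = [xa, x]$. To evaluate the product $[xa, x][x, xb]$ via the multiplication rule we must produce $u \,\RRR'\, v$ with $vx = ux$ and $L'_x \wedge L'_x = L'_{ux}$. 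Choosing $v := u$ to be any element satisfying $u \,\RRR'\, ux \,\LLL'\, x$ (Lemma~\ref{secprops}(i) applied to $x$) meets both requirements at once, so the rule yields $(a\phi)^{-1}(b\phi) = [uxa, uxb]$.

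It then remains to verify that $(uxa, uxb) \sim (a, b)$. Invoking Lemma~\ref{secprops}(i) a final time, pick $w$ with $w \,\RRR'\, w(uxa) \,\LLL'\, uxa$ and set $z := wux$. The equalities $w(uxa) = za$ and $w(uxb) = zb$ are then tautological; the relation $z \,\RRR'\, za$ follows by two applications of left compatibility of $\RRR'$ starting from $x \,\RRR'\, xa$ (giving $ux \,\RRR'\, uxa$ and hence $wux \,\RRR'\, wuxa$); and $za = wuxa \,\LLL'\, uxa \,\LLL'\, a$ uses $ux \,\LLL'\, x$ (arranged in the previous paragraph), right compatibility of $\LLL'$, and $xa \,\LLL'\, a$. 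The main obstacle here is organisational rather than conceptual—keeping track of which of the two congruence properties is needed at each step—since all substantive content has already been absorbed into the definition of multiplication in $P$ and into Lemma~\ref{secprops}. Combined with Lemma~\ref{Rproof}, this finishes the proof.
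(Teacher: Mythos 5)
Your argument is correct, and it follows the same overall skeleton as the paper's proof: write $[a,b]\in P$ as $(a\phi)^{-1}(b\phi)$ and get straightness from $a\,\RRR'\,b$ via Lemma~\ref{Rproof}. Where you differ is in the execution, and your route is genuinely leaner. The paper takes distinct witnesses $x$ and $y$ for $a\phi=[x,xa]$ and $b\phi=[y,yb]$, computes $[xa,x][y,yb]=[uxa,vyb]$ with $u,v$ supplied by (M1), and then must invoke (M4) (to pass from $wa=zuxa$ to $wb=zuxb$) and (M5) (to get $x\,\LLL'\,y$ and hence $L'_{ux}=L'_x$) to verify $(a,b)\sim(uxa,vyb)$. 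You instead exploit two pieces of well-definedness already established: since $\phi$ is well defined you may take the \emph{same} witness $x$ for both $a$ and $b$ (Lemma~\ref{secprops}(ii) justifies this), and since the product is independent of the choice of $u,v$ (Lemma~\ref{WD}) you may take $v=u$ with $u\,\RRR'\,ux\,\LLL'\,x$, which makes the meet condition $L'_x\wedge L'_x=L'_{ux}$ trivial. The final check $(uxa,uxb)\sim(a,b)$ then goes through with the explicit witnesses $w$ (from Lemma~\ref{secprops}(i) applied to $uxa$) and $z=wux$, making the two equalities tautological and the $\RRR'$/$\LLL'$ conditions follow from left and right compatibility alone; one should also note in passing that $uxa\,\RRR'\,uxb$ (left compatibility applied to $a\,\RRR'\,b$), so the pair does lie in $\Sigma$. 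What your approach buys is the elimination of (M1), (M4) and (M5) from this particular verification, at the cost of leaning on the previously proved well-definedness results; the paper's version re-runs its standard (M1)-based pattern, which is more uniform with the preceding lemmas but noticeably heavier here.
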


\begin{proof}
Let $[a,b] \in P$. Note that $a,b \in S$ with $a \, \RRR' \, b$.

Let $x \, \RRR' \, xa \, \LLL' \, a$
and $y \, \RRR' \, yb \, \LLL' \, b$,
so that $a  \phi = [x,xa]$ and $b  \phi = [y,yb]$.
By \mbox{Lemma \ref{Rproof}}, $a \phi \, \RRR^P \, b \phi$.
We have
\begin{equation*}
(a \phi)^{-1}(b \phi) = [xa,x][y,yb] = [uxa,vyb],
\end{equation*}
where $u$ and $v$ are the elements that exist by (M1) such that
\begin{equation*}
u \, \RRR' \, v \, \RRR' \, vy = ux \, \text{ and }
\, {L'_{x} \wedge L'_{y} = L'_{ux}}.
\end{equation*}
We want to prove that $(a \phi)^{-1}(b \phi) = [a,b]$.

We see that $(a,b) \sim (uxa,vyb)$ exactly if there exists $w,z \in S$ such that
\begin{equation*}
wa=zuxa, \, wb=zvyb, \; w \, \RRR' \,  wa \, \LLL' \, a\mbox{ and }
z \, \RRR' \, zuxa \, \LLL' \, uxa.
\end{equation*}
Applying Property (M1) to $a$ and $uxa$,
we know that there exists $w,z \in S$ such that
\begin{equation*}
w \, \RRR' \, z \, \RRR' \, {zuxa = wa} \, \text{ and } \,
{L'_{a} \wedge L'_{uxa} = L'_{wa}}.
\end{equation*}
We see that $wa=zuxa$ implies ${wb=zuxb}$ by (M4),
and therefore, since $ux = vy$ we have $wb = zvyb$.
We use Property (M5) to get that
$a \, \RRR' \, b$ implies $x \, \LLL' y$, and therefore
${L'_{ux} = L'_{x} \wedge L'_{y} = L'_{x}}$.
We then use the fact that $\LLL'$ is a right congruence to give us that
$ux \, \LLL' \, x$ implies $uxa \, \LLL' \, xa \, \LLL' \, a$.
Therefore ${L'_{wa} = L'_{a} \wedge L'_{uxa} = L'_{a}}$, and so
$zuxa = wa \, \LLL' \, a \, \LLL' \, uxa$.
This gives us ${[a,b] = (a \phi)^{-1}(b \phi)}$, where $a \phi \, \RRR^P \, b \phi$.
\end{proof}

The  proof of Theorem \ref{main} is now complete. 

\section{Right ample straight left I-orders}\label{sec:ample}

The aim of this section is to give two  applications
of Theorem~\ref{main} to new classes of semigroups. We could, of course, apply it to describe left I-orders in some classes of semigroups, such as primitive inverse semigroups, that have already been considered, but we refer the reader to the thesis of the second author \cite{schneiderthesis} for those arguments.  

In this work we have already seen the importance of the relation $\ars$, introduced in Section~\ref{sec:themainresult}. The dual relation is denoted by $\els$. In fact,  $\els$ is the (right compatible) equivalence relation associated with the (right compatible) preorder $\leqels$, where for $a,b\in S$ we have that $a\,\leqels\, b$ if and only if
for all $x,y\in S^1$, if $bx=by$ then $ax=ay$. The latter condition is equivalent to $a\, \leqel\, b$ in some oversemigroup of $S$. Of course a dual statement is true for $\ars$ but we do not explicitly need that here. We remark that as for $\ars$, the relations $\els$ and $\leqels$ will always refer to $S$ itself.

A semigroup for which $E(S)$ is a semilattice and every element is $\ars$-related to an idempotent is said to be {\em left adequate}. {\em Right adequate} semigroups are defined dually,
and a semigroup is {\em adequate} if it is right and left adequate. Notice that if $S$ is left (right) adequate, then $a\in S$ is
$\ars$-related ($\els$-related) to a unique idempotent, which we denote by $a^+$ ($a^*$). Right, left and (two-sided) adequate semigroups are potential analogues of inverse semigroups. It transpires that for the closest connections with inverse semigroups, we need some control on moving idempotents in products, as witnessed by the following definition.

\begin{definition}\label{defn:ample} A left adequate semigroup is {\em left ample} if for all $a,b\in S$ we have
\[ab^+=(ab)^+a.\]
{\em Right ample} semigroups are defined dually, and a semigroup is {\em ample} if it is both right  and left ample. 
\end{definition}
 
 It is easy to see that an inverse semigroup is ample, with
 $a^+=aa^{-1}$ and $a^*=a^{-1}a$. Moreover, if $S$ is a subsemigroup of an inverse semigroup $Q$ in a way that preserves $^+$, that is, it is as a {\em unary subsemigroup} of $Q$, then $S$ is left ample, with the dual and two-sided statements also holding. The precise relationship between one- and  two-sided ample semigroups and inverse semigroups is complex. 
It is worth noting that an ample semigroup $S$
may not be embeddable into an inverse semigroup in such a way
that preserves both $^+$ and $^*$. We refer the reader to \cite{gould2005faithful} for further details.

In \cite{ghroda2010bisimple} a characterisation was given of a particular kind of {\em left} ample straight left I-orders. 
In this section, we 
characterise {\em right} ample
and ({\em two-sided}) ample straight left I-orders,
sitting as unary and bi-unary subsemigroups of their semigroups of I-quotients, 
as an application of Theorem \ref{main}.
Note that not every right ample left I-order is straight. It is easy to see that the left I-order given in  Example \ref{nonstraight} is a left cancellative monoid, which is easily seen to be right ample.

%Subsection \ref{secrightample} is devoted to the proof ofTheorem \ref{rightample}, whichgives necessary and sufficient conditionsfor a right ample semigroup to be a straight left I-orderembedded into its semigroup of straight left I-quotients as aunary semigroup

%In Subsection \ref{sectwosidedample}, weuse Theorem \ref{rightample}to prove Corollary \ref{twosidedample},which gives a necessary and sufficient conditionfor an ample semigroup to be a left I-orderembedded into its semigroup of left I-quotients as abiunary semigroup.

We start with four useful lemmas. The first follows from the well-known fact that the restriction of $\leqel$ to regular elements is exactly $\leqels$.

\begin{lemma}\label{leqsame}
Let $S$ be a semigroup  and let ${e,f \in E(S)}$.
Then  $e \leq_{\LLL} f$ if and only if $e \leq_{\LLL^*} f$.
\end{lemma}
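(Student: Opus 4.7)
The plan is to unwind both preorders directly, using that $e$ and $f$ are idempotent and that the definition of $\leqels$ is a universally quantified implication over $S^1 \times S^1$. Recall that $e \leqel f$ is equivalent to $e \in S^1 f$, and $e \leqels f$ means that for all $x,y \in S^1$, $fx = fy$ implies $ex = ey$.

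For the forward direction, I would first combine $e \leqel f$ with $e = e^2$ to obtain $e = ef$. Then for any $x,y \in S^1$ with $fx = fy$, left-multiplication by $e$ gives $ex = efx = efy = ey$, so that $e \leqels f$. No properties of $S$ beyond associativity are used here.

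For the reverse direction, the key move is to instantiate the defining implication of $e \leqels f$ at the specific pair $x = 1$ and $y = f$ in $S^1$. Since $f$ is idempotent, we have $f \cdot 1 = f = f \cdot f$, so the hypothesis $fx = fy$ is automatic. The conclusion $ex = ey$ then reads $e = ef$, which places $e$ in $S^1 f$ and hence gives $e \leqel f$.

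I do not anticipate any real obstacle: once one notices the test case $x = 1,\ y = f$, the reverse direction is immediate, and the forward direction is a one-line multiplication. The role of $S^1$ (rather than $S$) is essential in the reverse direction to allow the choice $x = 1$.
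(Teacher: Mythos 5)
Your argument is correct. Both directions work: from $e\leqel f$ you get $e=sf$ for some $s\in S^1$, hence $e=ef$, and then $fx=fy$ forces $ex=efx=efy=ey$, giving $e\leqels f$; conversely the test pair $x=1$, $y=f$ (legitimate since $f=f^2$) turns the defining implication of $\leqels$ into $e=ef\in S^1f$, i.e.\ $e\leqel f$. One small slip in attribution: in the forward direction the identity $e=ef$ comes from the idempotency of $f$ (since $e=sf$ gives $ef=sf^2=sf=e$), not from $e=e^2$ as you state; this is immaterial here because both $e$ and $f$ are assumed idempotent, but it is worth recording the correct reason. As for comparison with the paper: the paper offers no computation at all, simply citing the well-known fact that the restriction of $\leqel$ to regular elements coincides with $\leqels$, of which the lemma is the special case for idempotents. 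Your proof is exactly the standard verification of that fact specialised to idempotents, so the mathematics is the same, but your version is self-contained and makes visible where $S^1$ (the availability of $x=1$) and the idempotency of $f$ are actually used, which the paper's one-line citation hides.
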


\begin{lemma}\label{wedgestar}
Let $S$ be a semigroup such that $E(S)$ is a semilattice. Then
the $\els$-classes of idempotents form a meet semilattice under the induced partial order on $S/\els$. Moreover,  for  ${e,f \in E(S)}$ we have
 $L^*_e \, \wedge \, L^*_f = L^*_{ef}$.
\end{lemma}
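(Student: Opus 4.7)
The plan is to prove the stronger statement that $L^*_{ef}$ is actually the meet of $L^*_e$ and $L^*_f$ in all of $S/\els$; the sub-meet-semilattice claim for the idempotent classes then follows by restriction, since $ef \in E(S)$ by the semilattice hypothesis. The whole argument turns on a single observation about $\leqels$ on idempotents.

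First I would check that $L^*_{ef}$ is a lower bound for $\{L^*_e,L^*_f\}$ in $S/\els$. Because $E(S)$ is a semilattice, $ef = fe$ is idempotent and $ef = f\cdot e \in S^1 e$, so $S^1\cdot ef \subseteq S^1\cdot e$ and hence $ef \leqel e$; symmetrically $ef \leqel f$. Lemma~\ref{leqsame} upgrades both to $ef \leqels e$ and $ef \leqels f$.

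Next I would establish the greatest-lower-bound property. Suppose $a \in S$ satisfies $a \leqels e$ and $a \leqels f$. The key step is to apply the defining implication of $a \leqels e$ with $x=e$ and $y=1 \in S^1$: since $e\cdot e = e = e\cdot 1$, the implication forces $ae = a$. The same reasoning gives $af = a$. Using commutativity of idempotents,
\[
a(ef) \,=\, (ae)f \,=\, af \,=\, a,
\]
so for any $x,y\in S^1$ with $(ef)x = (ef)y$ we obtain $ax = a(ef)x = a(ef)y = ay$, i.e.\ $a \leqels ef$. Combined with the previous paragraph this shows $L^*_e \wedge L^*_f = L^*_{ef}$ in $S/\els$, from which the sub-meet-semilattice statement for the $\els$-classes of idempotents is immediate.

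The only real obstacle is spotting the use of $y = 1 \in S^1$ to pull $ae = a$ out of the generic condition $a \leqels e$; once that reduction is made, the remainder is just commutativity in $E(S)$ and the definition of $\leqels$. Nothing deeper (such as the embedding into an inverse or oversemigroup implicit in $\els$) is needed.
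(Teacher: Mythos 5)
Your proof is correct and follows essentially the same route as the paper: the lower bound comes from $ef\leqel e,f$ together with $\leqel\subseteq\leqels$ (your use of Lemma~\ref{leqsame} on idempotents is an equivalent justification), and the greatest-lower-bound step is exactly the paper's trick of feeding $e\cdot e=e\cdot 1$ into the definition of $a\leqels e$ to get $ae=a$, $af=a$, hence $a(ef)=a$ and $a\leqels ef$. The only cosmetic difference is that you verify $a\leqels ef$ directly from the definition where the paper passes through $a=aef\leqel ef$.
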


\begin{proof} Since $\leqel\subseteq \leqels$ we certainly have $ef \leq_{\LLL^*} e,f$.

Now let $h \in S$ such that $h \leq_{\LLL^*} e,f$.
As $e1=ee$ we have $h=h1=he$ and similarly $h=hf$, so that
$h=hef\,\leqel\, ef$ and hence $h\, \leqels\, ef$. The result follows. 
\end{proof}

We give some elementary properties of right ample semigroups.
The duals of these properties apply to left ample semigroups. 

\begin{lemma}\label{right ample properties}
Let $S$ be a right ample semigroup.
Then for all $a,b\in S$:
\begin{enumerate}[(i)]
\item $aa^*=a$;
\item $(ab)^*=(a^*b)^*$;
%\item $b(ab)^*=a^*b$; and
\item $b^* = (ab)^*$ if and only if $b = a^*b$.
\end{enumerate}
\end{lemma}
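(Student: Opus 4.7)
The plan is to derive all three parts from two ingredients: the defining right-ample identity $a^*b = b(ab)^*$, obtained as the opposite-semigroup dual of $ab^+ = (ab)^+a$ from Definition~\ref{defn:ample}; and the $\els$-characterisation noted at the start of Section~\ref{sec:ample}, which says that $a \,\els\, a^*$ is equivalent to the statement that for all $x,y \in S^1$, $ax = ay$ if and only if $a^*x = a^*y$.

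For (i), I would instantiate the $\els$-characterisation with $x = a^*$ and $y = 1$. Idempotency of $a^*$ makes the right-hand equation $a^*a^* = a^*\cdot 1$ automatic, and so the left-hand equation $aa^* = a$ follows immediately.

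For (ii), the approach is to prove the two inequalities $(a^*b)^* \leq (ab)^*$ and $(ab)^* \leq (a^*b)^*$ in the semilattice $E(S)$. The key tool is the standard characterisation (immediate from the $\els$-characterisation) that $x^*$ is the smallest idempotent $f$ satisfying $xf = x$. The first inequality comes from the one-line computation
\begin{equation*}
(a^*b)(ab)^* = b(ab)^*(ab)^* = b(ab)^* = a^*b,
\end{equation*}
using the right-ample identity and idempotency of $(ab)^*$. The second follows from
\begin{equation*}
(ab)(a^*b)^* = (aa^*)b(a^*b)^* = a\bigl((a^*b)(a^*b)^*\bigr) = a(a^*b) = ab,
\end{equation*}
applying (i) twice. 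As $E(S)$ is a semilattice, equality of the two idempotents follows.

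For (iii), the forward direction is a one-line consequence of the right-ample identity combined with (i): if $b^* = (ab)^*$, then $a^*b = b(ab)^* = bb^* = b$. The reverse direction is immediate from (ii): if $b = a^*b$, then $(ab)^* = (a^*b)^* = b^*$. I do not anticipate a serious obstacle; the only moment requiring care is correctly deriving the right-ample identity as the dual of $ab^+ = (ab)^+a$, after which the arguments are short manipulations in $E(S)$ and repeated use of (i).
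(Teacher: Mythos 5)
Your proposal is correct. Parts (i) and (iii) follow essentially the paper's own route: your proof of (i) is exactly the $\els$-instantiation $a^*a^* = a^*\cdot 1 \Rightarrow aa^* = a\cdot 1 = a$ (the paper refers to the same computation as in Lemma~\ref{wedgestar}), and your (iii) is the identical two-line argument, forward direction via the right-ample identity and (i), reverse direction by applying $^*$ and invoking (ii). Where you genuinely diverge is (ii): the paper's proof is a one-liner — since $\els$ is a right congruence and $a\,\els\,a^*$, we get $ab\,\els\,a^*b$, hence $(ab)^*=(a^*b)^*$ — which needs only right adequacy. You instead establish the two inequalities $(a^*b)^*\leq(ab)^*$ and $(ab)^*\leq(a^*b)^*$ in the semilattice $E(S)$, using the right-ample identity $a^*b=b(ab)^*$ (which you dualise correctly from Definition~\ref{defn:ample}), part (i), and the characterisation of $x^*$ as the least idempotent $f$ with $xf=x$; both computations check out and antisymmetry finishes it. The cost of your route is length and an unnecessary appeal to the ample condition (the right-congruence argument shows (ii) already in right adequate semigroups); what it buys is a purely equational verification that makes the natural order on $E(S)$ and the minimality property of $x^*$ explicit, tools you would need elsewhere in Section~\ref{sec:ample} anyway.
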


\begin{proof} Claim (i) follows similarly to the argument in Lemma~\ref{wedgestar}; (ii) uses the fact that $\els$ is a right congruence. For  (iii),  let $b^* = (ab)^*$.
Then, using the ample condition,
\begin{equation*}
 b = bb^* = b(ab)^* = a^*b.
\end{equation*}

Conversely, let $b = a^*b$. Then, applying $^*$ to both sides,
\begin{equation*}
b^* = (a^*b)^* = (ab)^*,  
\end{equation*}
using (ii) in the last equality.
\end{proof}

\begin{lemma}\label{ampleL*}
Let $S$ be a right ample subsemigroup of an inverse semigroup $Q$.
Then $S$ is embedded as a unary semigroup into $Q$
if and only if $\LLL^Q \cap (S \times S) = \LLL^*$.
\end{lemma}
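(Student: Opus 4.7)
The plan is first to unpack what it means for $S$ to be embedded as a unary subsemigroup of $Q$. The unary operation on $S$ comes from its right ample structure and sends $a$ to $a^*$. On the inverse semigroup $Q$, the corresponding unary operation (regarding $Q$ as a right ample semigroup) sends $a$ to $a^{-1}a$. So being a unary subsemigroup amounts to the identity $a^* = a^{-1}a$ holding for every $a \in S$. With this reformulation in hand the lemma reduces to a short equivalence.

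For the forward implication, I would assume $a^* = a^{-1}a$ for every $a\in S$. For $a,b\in S$, the standard inverse semigroup identity gives $a\,\LLL^Q\,b$ if and only if $a^{-1}a = b^{-1}b$, and under the hypothesis this is equivalent to $a^* = b^*$. In a right ample semigroup each $\LLL^*$-class contains a unique idempotent (namely $a^*$ for any representative $a$, by Lemma \ref{right ample properties}(i) and commutativity of $E(S)$), so $a^* = b^*$ is in turn equivalent to $a\,\LLL^*\,b$. Chaining these equivalences yields $\LLL^Q\cap(S\times S)=\LLL^*$.

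For the converse, I would assume $\LLL^Q\cap(S\times S)=\LLL^*$ and apply the hypothesis to the pair $(a,a^*)$. Since $a\,\LLL^*\,a^*$ in any right ample semigroup, this gives $a\,\LLL^Q\,a^*$ in $Q$, whence $a^{-1}a = (a^*)^{-1}a^*$. But $a^*\in E(S)\subseteq E(Q)$ is an idempotent of the inverse semigroup $Q$, hence self-inverse, so $(a^*)^{-1}a^* = a^*a^* = a^*$. Therefore $a^{-1}a = a^*$, as required.

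There is no substantive obstacle here: the argument is a brief chain of equivalences relying on the uniqueness of the idempotent in each $\LLL^*$-class of a right ample semigroup, together with the inverse semigroup characterisation $a\,\LLL^Q\,b \iff a^{-1}a = b^{-1}b$. The only real care needed is in being unambiguous about which unary operation on $Q$ we are comparing with the $^*$ of $S$, and verifying that $E(S)\subseteq E(Q)$ (which is immediate, since $S$ is a subsemigroup and idempotency is absolute).
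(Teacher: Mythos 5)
Your argument is correct and is essentially the paper's own proof: the paper likewise reduces the statement to the two characterisations $a\,\LLL^*\,b \iff a^*=b^*$ and $a\,\LLL^Q\,b \iff a^{-1}a=b^{-1}b$, merely stating that the lemma "follows", while you spell out both directions (including the useful observation that applying the hypothesis to the pair $(a,a^*)$ yields $a^{-1}a=a^*$). No gaps.
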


\begin{proof} For any $a,b\in S$ we have $a\,\els\, b$ if and only if $a^*=b^*$. Further, $a\,\el^Q\, b$ if and only if $a^{-1}a=b^{-1}b$. The  statement follows.  
\end{proof}

\subsection{Right ample straight left I-orders}\label{secrightample}

The aim of this subsection is to prove the following theorem.

\begin{theorem}\label{rightample}
Let $S$ be a right ample semigroup
and let $\RRR'$ be a binary relation on $S$.
Then $S$ has a semigroup of straight left I-quotients  $Q$,
such that $S$ is embedded in $Q$
as a unary semigroup
and $\RRR^Q \cap (S \times S) = \RRR'$,
if and only if $\RRR'$ is a left congruence on $S$
such that $S$ satisfies Conditions (A1) - (A3).
\begin{itemize}
\item[(A1)] For all $\alpha,\beta \in S$, there exists $\gamma,\delta \in S$ such that
$\gamma \, \RRR' \, \delta \, \RRR' \, \delta\beta = \gamma\alpha$
and $\alpha\beta^* = \gamma^*\alpha$.
\item[(A2)] For all $\alpha,\beta,\gamma \in S$,
$\gamma\alpha \, \RRR' \, \gamma\beta$ implies that
$\gamma^*\alpha \, \RRR' \, \gamma^*\beta$.
\item[(A3)] $\RRR' \subseteq \RRR^*$.
\end{itemize}
\end{theorem}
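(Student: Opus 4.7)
The plan is to apply Theorem~\ref{main} with the choice $\leq_l := \leqels$, so that the induced equivalence $\LLL'$ is $\els$. By Lemma~\ref{ampleL*}, this choice is precisely what encodes the requirement that $S$ be embedded in $Q$ as a unary subsemigroup. The hypothesis that the $\LLL'$-classes form a meet semilattice follows from Lemma~\ref{wedgestar}, upon noting that $L^*_a = L^*_{a^*}$ for any $a \in S$, so $L^*_a \wedge L^*_b = L^*_{a^*b^*}$.

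For the forward direction, assume $S$ is a unary-embedded straight left I-order in $Q$. Then $\RRR'$ is a left congruence by inheritance, and (A3) holds because $\RRR' \subseteq \RRR^*$. For (A2), the hypothesis $\gamma\alpha\,\RRR^Q\,\gamma\beta$ unfolds to $\gamma\alpha\alpha^{-1}\gamma^{-1} = \gamma\beta\beta^{-1}\gamma^{-1}$; conjugating by $\gamma^{-1}$ on the left and $\gamma$ on the right yields $\gamma^*\alpha\,\RRR^Q\,\gamma^*\beta$. For (A1), apply (M1) of Theorem~\ref{main} to obtain $\gamma,\delta$ with $L^*_\alpha\wedge L^*_\beta = L^*_{\gamma\alpha}$; using Lemma~\ref{Invmeet} this becomes $\alpha^{-1}\alpha\beta^{-1}\beta = \alpha^{-1}\gamma^{-1}\gamma\alpha$, which on left-multiplication by $\alpha$ and use of the commutativity of idempotents yields $\alpha\beta^* = \gamma^*\alpha$.

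For the converse, we verify (M1)--(M6) of Theorem~\ref{main} from (A1)--(A3) and the right ample identity $a^*b = b(ab)^*$. Conditions (M3) and (M4) are immediate. For (M1), given $\gamma,\delta$ from (A1), Lemma~\ref{right ample properties}(ii) yields $(\gamma\alpha)^* = (\gamma^*\alpha)^* = (\alpha\beta^*)^* = \alpha^*\beta^*$, so $L^*_{\gamma\alpha} = L^*_\alpha \wedge L^*_\beta$. For (M2), two applications of the right ample identity give $\alpha^*\beta^*\delta = \delta(\alpha\delta)^*(\beta\delta)^*$; since both $(\alpha\delta)^*$ and $(\beta\delta)^*$ lie below $\delta^*$ in the semilattice, passing to $^*$ returns $(\gamma\delta)^* = (\alpha\delta)^*(\beta\delta)^*$, giving (M2).

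The main obstacle is the pair (M5) and (M6), which both rest on a uniqueness lemma: if $\gamma\,\RRR'\,\gamma\alpha\,\els\,\alpha$ and $\gamma'\,\RRR'\,\gamma'\alpha\,\els\,\alpha$, then $\gamma^* = \gamma'^*$. To prove it, apply (A2) to $\gamma\gamma^* = \gamma\,\RRR'\,\gamma\alpha$ to obtain $\gamma^*\gamma^* = \gamma^*\,\RRR'\,\gamma^*\alpha = \alpha$; by symmetry $\gamma'^*\,\RRR'\,\alpha$, so $\gamma^*\,\RRR'\,\gamma'^*$, and (A3) together with the $\RRR^*$-defining property (taking $x=\gamma'^*, y=1$ and then $x=\gamma^*, y=1$) forces $\gamma^*\gamma'^* = \gamma^* = \gamma'^*$. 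Both directions of (M5) then follow: for $\gamma\,\els\,\delta \Rightarrow \alpha\,\RRR'\,\beta$, combine $\gamma^*\,\RRR'\,\alpha$, $\delta^*\,\RRR'\,\beta$ and $\gamma^* = \delta^*$ by transitivity; for the converse, (A3) delivers $\gamma^*\beta = \beta$, hence $\gamma\beta\,\els\,\beta$, and applying the uniqueness lemma to the common element $\beta$ (using $\gamma\,\RRR'\,\gamma\beta$ obtained from left compatibility) yields $\gamma^* = \delta^*$. For (M6), the equality $\gamma\alpha = \gamma\beta$ combined with (A2) gives $\alpha\,\RRR'\,\beta$; applying (A1) to $\alpha,\beta$ produces $\mu,\nu$ with $\mu\alpha = \nu\beta$ and $\mu^*\alpha = \alpha\beta^* = \alpha$, to which the uniqueness lemma assigns $\mu^* = \nu^* = \gamma^*$, and combining this with the right ample identity $\mu^*\alpha = \alpha(\mu\alpha)^*$ and the $\RRR^*$-property of (A3) forces $\alpha = \beta$. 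Once (M1)--(M6) are verified, Theorem~\ref{main} produces the required inverse $Q$, with $S$ unary-embedded by Lemma~\ref{ampleL*}.
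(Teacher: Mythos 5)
Your overall route is exactly the paper's: take $\leq_l\,=\,\leqels$ (so $\LLL'=\els$), check the meet-semilattice hypothesis via Lemma~\ref{wedgestar}, verify (M1)--(M6), and then invoke Theorem~\ref{main} together with Lemma~\ref{ampleL*} to recover the unary embedding. Your forward direction and your verifications of (M1)--(M5) are in substance the paper's computations; your ``uniqueness lemma'' is precisely the first half of the paper's (M5) argument (apply (A2) to $\gamma\gamma^*=\gamma\,\RRR'\,\gamma\alpha$, use Lemma~\ref{right ample properties}(iii) to get $\gamma^*\alpha=\alpha$, then (A3) plus the fact that a semilattice of idempotents admits at most one idempotent per $\ars$-class).

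The genuine gap is in your verification of (M6). The hypotheses of (M6) are only $\alpha \, \LLL' \, \beta \, \LLL' \, \gamma\alpha = \gamma\beta$: there is no assumption $\gamma\,\RRR'\,\gamma\alpha$, indeed no $\RRR'$-condition on $\gamma$ at all, so $\gamma$ is not a ``witness'' in the sense of your uniqueness lemma and the identification $\mu^*=\nu^*=\gamma^*$ is unjustified. (Even the equality $\mu^*=\nu^*$ alone needs a bridge, since your lemma compares two witnesses of the \emph{same} element, whereas $\mu$ witnesses $\alpha$ and $\nu$ witnesses $\beta$; one would first have to transfer via $\alpha\,\RRR^*\,\beta$.) The closing inference ``combining this with $\mu^*\alpha=\alpha(\mu\alpha)^*$ and the $\RRR^*$-property of (A3) forces $\alpha=\beta$'' is not a checkable deduction as written. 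Note also that your opening step of (M6) already tacitly uses Lemma~\ref{right ample properties}(iii): (A2) applied to $\gamma\alpha=\gamma\beta$ only gives $\gamma^*\alpha\,\RRR'\,\gamma^*\beta$, and you need $\gamma^*\alpha=\alpha$ and $\gamma^*\beta=\beta$ (which do follow from $\alpha\,\els\,\gamma\alpha$ and $\beta\,\els\,\gamma\beta$) to convert this into $\alpha\,\RRR'\,\beta$. But once you have those two identities the whole detour through (A1) is unnecessary and the proof closes immediately, as in the paper: since $\gamma\,\els\,\gamma^*$, the equality $\gamma\alpha=\gamma\beta$ yields $\gamma^*\alpha=\gamma^*\beta$, and hence $\alpha=\beta$. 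Replacing your (M6) paragraph by this argument makes the proposal correct.
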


To prove this we will apply Theorem \ref{main}.
In order to apply Theorem \ref{main},
we must find $\leq_{\LLL^Q}$ and the associated meet.

\begin{lemma}\label{Right ample meet}
Let $S$ be a right ample semigroup embedded as a unary semigroup into an inverse semigroup $Q$.
Then for all $a,b,c,x \in S$,
\begin{enumerate}[(i)]
\item $a \leq_{\LLL^Q} b$
if and only if $a^* = a^*b^*$
if and only if $a \leq_{\LLL^*} b$;
\item $L_a \, \wedge \, L_b = L_{c} \,$
if and only if $\, c^* = a^*b^*$; and
\item $L_a \, \wedge \, L_b = L_{xa} \,$
if and only if $\, ab^* = x^*a$.
\end{enumerate}
\end{lemma}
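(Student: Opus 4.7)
The proof will be a short application of the identifications available under the unary embedding, combined with elementary right ample manipulations.

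For part (i), the key reduction is via Lemma~\ref{ampleL*}: since $S$ sits in $Q$ as a unary subsemigroup, $\LLL^Q\cap(S\times S)=\LLL^*$, and so for any $a\in S$ we have $a\,\el^Q\,a^*$, giving $a^{-1}a=(a^*)^{-1}a^*=a^*$ in $Q$; likewise $b^{-1}b=b^*$. Therefore
\[
a\leq_{\LLL^Q}b \iff a^{-1}a=a^{-1}a\,b^{-1}b \iff a^*=a^*b^*.
\]
To connect to $\leq_{\LLL^*}$, I would observe that because $a\,\els\,a^*$ and $b\,\els\,b^*$, $a\leq_{\LLL^*}b$ iff $a^*\leq_{\LLL^*}b^*$; Lemma~\ref{leqsame} then converts this to $a^*\leq_{\LLL}b^*$, which for the commuting idempotents $a^*,b^*$ of $E(S)$ is equivalent to $a^*=a^*b^*$.

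Part (ii) is immediate: by Lemma~\ref{Invmeet}, $L_a\wedge L_b=L_c$ iff $c^{-1}c=a^{-1}a\,b^{-1}b$, and substituting the identifications $a^{-1}a=a^*$, $b^{-1}b=b^*$, $c^{-1}c=c^*$ yields $c^*=a^*b^*$.

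For part (iii), I would specialize (ii) with $c=xa$ to reduce the claim to the equivalence $(xa)^*=a^*b^* \iff ab^*=x^*a$. The forward direction uses the right ample identity (the dual of $ab^+=(ab)^+a$), namely $x^*a=a(xa)^*$: substituting the hypothesis $(xa)^*=a^*b^*$ gives $x^*a=a\cdot a^*b^*=ab^*$, where the last equality uses Lemma~\ref{right ample properties}(i). For the converse, apply $^*$ to both sides of $ab^*=x^*a$ and invoke Lemma~\ref{right ample properties}(ii) on each side together with the fact that $(a^*b^*)^*=a^*b^*$ (since $a^*b^*\in E(S)$ and every idempotent is $^*$-fixed):
\[
(xa)^*=(x^*a)^*=(ab^*)^*=(a^*b^*)^*=a^*b^*.
\]

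The only real obstacle is keeping track of the correct dual form of the ample identity; once this is fixed and the identification $a^{-1}a=a^*$ is in hand, each part is a short algebraic substitution.
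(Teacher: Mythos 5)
Your proof is correct and follows essentially the same route as the paper: identify $a^{*}=a^{-1}a$ via the unary embedding, reduce (i) to idempotents of $E(S)$ using Lemma \ref{leqsame}, and prove (iii) with the right ample identity $x^{*}a=a(xa)^{*}$ together with Lemma \ref{right ample properties}, exactly as the paper does. The only cosmetic difference is in (ii), where you apply Lemma \ref{Invmeet} directly while the paper passes through the order isomorphism and Lemma \ref{wedgestar}; both amount to the same one-line computation $c^{-1}c=a^{-1}a\,b^{-1}b$.
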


\begin{proof} First recall that 
since $S$ is embedded in $Q$ in such a way that $^*$ is preserved,
we have that for all $a \in S$, $a^* = a^{-1}a$.

\begin{enumerate}[(i)]
\item By Lemma \ref{leqsame},
$a^* \leq b^*$, $a^* \leq_{\LLL^Q} b^*$
and $a^* \leq_{\LLL^*} b^*$ are all equivalent.
The result is then obtained by noticing that
$a \, \LLL^* \, a^*$ and
$a \, \LLL^Q \, a^*$.
\item Using (i), we can see that the poset $Q / \LLL$ is order isomorphic to
$Q / \LLL^*$.
Therefore $L_{a} \, \wedge \, L_{b} = L_{c} \,$ if and only if
$L^*_{a} \, \wedge \, L^*_{b} = L^*_{c} \,$,
which is true exactly when $L^*_{a^*} \, \wedge \, L^*_{b^*} = L^*_{c^*} \,$.
By Lemma \ref{wedgestar}, we know that $L^*_{a^*} \, \wedge \, L^*_{b^*} = L^*_{a^*b^*} \,$.
Therefore $L^*_{a^*} \, \wedge \, L^*_{b^*} = L^*_{c^*} \,$ if and only if $c^* = a^*b^*$.
\item Let $L_a \, \wedge \, L_b = L_{xa} \,$.
Therefore, by (ii), we have $a^*b^* = (xa)^*$.
Then, using Lemma \ref{right ample properties},
\begin{equation*}
ab^* = aa^*b^* = a(xa)^* = x^*a.  
\end{equation*}
On the other hand let $ab^* = x^*a$.
Then, again  using  Lemma \ref{right ample properties},
\begin{equation*}
(xa)^* = (x^*a)^* = (ab^*)^* = (a^*b^*)^* = a^*b^*,
\end{equation*}
and so by (ii), we have $L_a \, \wedge \, L_b = L_{xa} \,$.
\end{enumerate}
\end{proof}

We now prove Theorem \ref{rightample}.

\begin{proof}
We consider the forward implication first.
Let $S$ be a right ample straight left I-order in $Q$,
such that $S$ is embedded in $Q$
as a unary semigroup,
and let ${\RRR' = \RRR^Q \cap (S \times S)}$.
We know that $\RRR'$ is a left congruence and that therefore
$\gamma \alpha \, \RRR' \, \gamma \beta$ implies that
$\gamma^{-1}\gamma \alpha \, \RRR' \, \gamma^{-1}\gamma \beta$,
so (A2) is satisfied.
Using Theorem \ref{main}, we know that (M1) and (M4) are satisfied,
which are exactly (A1) and (A3) respectively,
using Lemma \ref{Right ample meet} (iii).

We will prove the converse
by proving each property in Theorem \ref{main} with
${\leq_l \, = \, \leq_{\LLL^*}}$, so that
$\el'=\els$.
Using the fact that $a \, \LLL^* \, a^*$ for all $a \in S$
along with Lemma \ref{leqsame}, we see that
$a \leq_l b$ if and only if $a^*b^* = a^*$,
for $a,b, \in S$. 
We already know that $\leq_{\LLL^*}$ is a right compatible preorder.
By Lemma \ref{wedgestar}, we know that
$L^*_{a^*} \, \wedge \, L^*_{b^*} = L^*_{a^*b^*}$.
Therefore, using the fact that
there is a unique idempotent in each $\LLL^*$-class,
we have that
\begin{equation*}
L^*_{a} \, \wedge \, L^*_{b} = L^*_{c}
\, \text{ if and only if } \, c^* = a^*b^*.
\end{equation*}

We will now prove (M1) - (M6) with $\leq_l \, = \, \leq_{\LLL^*}$
in order to satisfy the conditions of Theorem \ref{main}.

\begin{enumerate}
\item[(M1)] Let $\alpha,\beta \in S$.
Applying Property (A1), there exists $\gamma,\delta \in S$ such that
$\gamma \, \RRR' \, \delta \, \RRR' \, \delta\beta = \gamma\alpha$
and $\alpha\beta^* = \gamma^*\alpha$.
We can use Lemma \ref{right ample properties} (ii),
along with $\alpha\beta^* = \gamma^*\alpha$ to obtain
\begin{equation*}
(\gamma\alpha)^* = (\gamma^*\alpha)^* = (\alpha\beta^*)^*
= (\alpha^*\beta^*)^* = \alpha^*\beta^*.
\end{equation*}
Therefore $L^*_{\alpha} \, \wedge \, L^*_{\beta} = L^*_{\gamma\alpha}$.

\item[(M3)] By definition, we know that $\alpha \beta \leq_{\LLL^*} \beta$.

\item[(M2)] Let $L^*_{\alpha} \, \wedge \, L^*_{\beta} = L^*_{\gamma} \,$.
Then $\gamma^* = \alpha^*\beta^*$. Also, let $\delta \in S$.
We use the ample condition twice, to get
\begin{equation*}
{\delta(\alpha \delta)^*(\beta \delta)^* = \alpha^*\delta(\beta \delta)^* = \alpha^*\beta^*\delta}.
\end{equation*}
Therefore, using Lemma \ref{right ample properties} (ii),
\begin{equation}\label{RA1}
(\alpha^*\beta^*\delta)^* = (\delta(\alpha \delta)^*(\beta \delta)^*)^* =
(\delta^*(\alpha \delta)^*(\beta \delta)^*)^* = \delta^*(\alpha \delta)^*(\beta \delta)^*.
\end{equation}
Also, since $\alpha \delta \leq_l \delta$ by (M3),
we have that
\begin{equation}\label{RA2}
\delta^*(\alpha \delta)^* = (\alpha \delta)^*.
\end{equation}
Lastly, using Lemma \ref{right ample properties} (ii),
\begin{equation}\label{RA3}
(\gamma \delta)^* = (\gamma^*\delta)^* = (\alpha ^*\beta^*\delta)^*.
\end{equation}
Putting all this together,
\begin{equation*}
(\gamma \delta)^* \overset{(\ref{RA3})}{=} (\alpha^*\beta^*\delta)^* \overset{(\ref{RA1})}{=}
\delta^*(\alpha \delta)^*(\beta \delta)^* \overset{(\ref{RA2})}{=} (\alpha \delta)^*(\beta \delta)^*,
\end{equation*}
which gives us that
$L^*_{\alpha \delta} \, \wedge \, L^*_{\beta \delta} = L^*_{\gamma \delta} \,$.

\item[(M4)] This is Property (A3).

\item[(M5)] Let $\gamma \, \RRR' \, \gamma\alpha \, \LLL^* \, \alpha$ and let
$\delta \, \RRR' \, \delta\beta \, \LLL^* \, \beta$.
We have that $\gamma\gamma^* = \gamma \, \RRR' \, \gamma\alpha$.
Therefore we can use Property (A2) to obtain
$\gamma^* = \gamma^*\gamma^* \, \RRR' \, \gamma^*\alpha$.
We also have that $\gamma\alpha \, \LLL^* \, \alpha$, and so
$(\gamma\alpha)^* = \alpha^*$.
By Lemma \ref{right ample properties} (iii)
this is equivalent to ${\alpha = \gamma^*\alpha}$.
Similarly $\delta^* \, \RRR' \, \delta^*\beta$ and ${\beta = \delta^*\beta}$.

Let $\gamma \, \LLL^* \, \delta$, and so $\gamma^* = \delta^*$.
Therefore $\alpha = \gamma^*\alpha \, \RRR' \, \gamma^* = \delta^* \, \RRR' \, \delta^*\beta = \beta$.

Conversely, let $\alpha \, \RRR' \, \beta$.
We see that $\gamma^* \, \RRR' \, \gamma^*\alpha =
\alpha \, \RRR' \, \beta = \delta^*\beta \, \RRR' \, \delta^*$,
and therefore using (A3), $\gamma^* \, \RRR^* \, \delta^*$.
We know that since $E(S)$ is a semilattice, there can only be one
idempotent in each \mbox{$\RRR^*$-class}, and so $\gamma^* = \delta^*$.

\item[(M6)] Let $\alpha \, \LLL^* \, \beta
\, \LLL^* \, \gamma\alpha \, \LLL^* \, \gamma\beta$ and let $\gamma\alpha = \gamma\beta$.
We have that
\begin{equation*}
\alpha^* = \beta^* = (\gamma\alpha)^* = (\gamma\beta)^*, 
\end{equation*}
and so we can use \mbox{Lemma \ref{right ample properties} (iii)} to give us that
$\alpha = \gamma^*\alpha$ and $\beta = \gamma^*\beta$.
We then use the fact that $\gamma \, \LLL^* \, \gamma^*$, to give us that
$\gamma\alpha = \gamma\beta$ implies that $\gamma^*\alpha = \gamma^*\beta$,
and therefore $\alpha = \beta$.
\end{enumerate}

Therefore, $S$ with $\, \leq_l \, =  \, \leq_{\LLL^*}$
satisfies the conditions of
Theorem \ref{main} and we can apply
Theorem \ref{main} to give us that
$S$ has a semigroup of straight left \mbox{I-quotients}, $Q$,
such that $\RRR^Q \cap (S \times S) = \RRR'$
and $\leq_{\LLL^Q} \cap \, (S \times S) = \, \leq_{\LLL^*}$.
Therefore $\LLL^Q \cap (S \times S) = \LLL^*$,
and so by Lemma \ref{ampleL*}, $S$ is embedded in $Q$ as a unary semigroup.
\end{proof}

\subsection{Two-sided ample left I-orders} \label{sectwosidedample}

Finally we consider (two-sided) ample semigroups as left I-orders.
Here we have a pleasing description much more reminiscent of the Ore result describing classical left orders in groups. 

\begin{corollary}\label{twosidedample}
Let $S$ be an  ample semigroup.
Then $S$ has a semigroup of left I-quotients $Q$ such that $S$ is a bi-unary subsemigroup of $Q$, 
if and only if
for all $b,c \in S$,
there exists $u,v \in S$ such that
\begin{equation}
ub = vc, \; u^+ = v^+ = (vc)^+ \mbox{ and } bc^* = u^*b. \tag{$\star$}
\end{equation}
\end{corollary}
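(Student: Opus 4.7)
The plan is to derive both directions of the corollary from Theorem \ref{rightample}, choosing $\RRR' = \RRR^*$, while observing that an ample, bi-unary embedding $S \hookrightarrow Q$ automatically makes $S$ straight.

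\textbf{Forward direction.} Assume $S$ is ample and sits inside $Q$ as a bi-unary subsemigroup with $Q$ a semigroup of left I-quotients of $S$. I would first deduce straightness via Lemma \ref{IntersectLclass}: for any $q = a^{-1}b \in Q$ with $a,b \in S$, preservation of $^*$ gives $q^{-1}q = b^{-1}b = b^* \in S$, so $S$ meets $L^Q_q$. Preservation of $^+$ is equivalent to $\RRR^Q \cap (S \times S) = \RRR^*$, since $a\,\RRR^Q\,b$ in $Q$ means $aa^{-1} = bb^{-1}$, which under $a^+ = aa^{-1}$ becomes $a^+ = b^+$, the characterisation of $\RRR^*$ in a left adequate semigroup. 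Theorem \ref{rightample} then applies with $\RRR' = \RRR^*$, and its conclusion (A1) reads: for all $b,c \in S$ there exist $u,v \in S$ with $u\,\RRR^*\,v\,\RRR^*\,vc = ub$ and $bc^* = u^*b$. Rewriting the $\RRR^*$-equivalences as equalities of $^+$ gives exactly $(\star)$.

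\textbf{Reverse direction.} Assuming $(\star)$, I would take $\RRR' = \RRR^*$ and verify the three conditions of Theorem \ref{rightample}. The relation $\RRR^*$ is always a left congruence; (A3) is tautological; and (A1) is the direct translation of $(\star)$ via $x\,\RRR^*\,y \iff x^+ = y^+$. The real work lies in (A2): starting from $(\gamma\alpha)^+ = (\gamma\beta)^+$, one applies the left ample identity $xy^+ = (xy)^+x$ to rewrite this as $\gamma\alpha^+ = \gamma\beta^+$, then uses $\gamma \,\els\, \gamma^*$ (equivalent to: $\gamma x = \gamma y \iff \gamma^* x = \gamma^* y$ for all $x,y \in S^1$) to obtain $\gamma^*\alpha^+ = \gamma^*\beta^+$, applies left ample again to reach $(\gamma^*\alpha)^+\gamma^* = (\gamma^*\beta)^+\gamma^*$, and finally notes that both $(\gamma^*\alpha)^+$ and $(\gamma^*\beta)^+$ lie below $(\gamma^*)^+ = \gamma^*$ in the semilattice $E(S)$, so the trailing $\gamma^*$ is absorbed and $(\gamma^*\alpha)^+ = (\gamma^*\beta)^+$ follows. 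Theorem \ref{rightample} then furnishes a semigroup $Q$ of straight left I-quotients with $\RRR^Q \cap (S \times S) = \RRR^*$ and $S$ unary-embedded (preserving $^*$); the $\RRR^Q$ coincidence forces preservation of $^+$ as well, giving a bi-unary embedding.

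\textbf{Main obstacle.} The subtle step is (A2). This is the one place in the argument where two-sidedness of ampleness is genuinely needed — $(\star)$ itself reads like a purely right-ample condition, and it is the intertwining of the right ample structure (hidden inside the $\els$-swap) with the left ample identity $xy^+ = (xy)^+x$ that permits transporting a $\RRR^*$-relation across multiplication by $\gamma^*$.
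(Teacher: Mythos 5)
The genuine problem is in your forward direction, at the point where you deduce straightness. For $q=a^{-1}b$ with $a,b\in S$ you assert that preservation of $^*$ gives $q^{-1}q=b^{-1}b$; in fact $q^{-1}q=b^{-1}aa^{-1}b$, and this equals $b^{-1}b$ only when $bb^{-1}\leq aa^{-1}$, which is essentially the straightness you are trying to establish, so the step is circular as well as false. Moreover the idea cannot be repaired using $^*$ alone: Example~\ref{nonstraight} of the paper is a right ample left I-order (a left cancellative monoid, so $\alpha^*=1$ for all $\alpha$) sitting inside $\mathcal{I}_X$ with $^*$ preserved, since every surjective $\alpha$ satisfies $\alpha^{-1}\alpha=1_X$, and yet it is not straight. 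What you must use here is preservation of $^+$: from $q=a^{-1}b$ one has $q\,\LLL^Q\,aa^{-1}b=a^{+}b\in S$, so $S$ meets every $\LLL^Q$-class and Lemma~\ref{IntersectLclass} gives straightness (equivalently, with $u=a^{+}b^{+}\in E(S)$ one writes $q=(ua)^{-1}(ub)$ with $ua\,\RRR^Q\,ub$). The paper short-circuits this point by citing Lemma~2.4 of \cite{ghroda2012semigroups}; with the $^+$-based fix your forward direction is sound, since the rest of it (preservation of $^+$ being equivalent to $\RRR^Q\cap(S\times S)=\RRR^*$, and (A1) of Theorem~\ref{rightample} translating into $(\star)$) agrees with the paper.

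Your reverse direction is essentially the paper's proof: take $\RRR'=\RRR^*$, observe that (A1) is $(\star)$ and (A3) is trivial, and verify (A2) by passing from $(\gamma\alpha)^{+}=(\gamma\beta)^{+}$ to $\gamma\alpha^{+}=\gamma\beta^{+}$ via the left ample identity and the dual of Lemma~\ref{right ample properties}(ii), then trading $\gamma$ for $\gamma^*$ using $\gamma\,\els\,\gamma^*$. Your final absorption step, using $(\gamma^*\alpha)^{+}\leq\gamma^*$ in the semilattice $E(S)$, is a harmless variant of the paper's conclusion (which instead applies $^+$ to both sides of $\gamma^*\alpha^{+}=\gamma^*\beta^{+}$), and recovering preservation of $^+$ from $\RRR^Q\cap(S\times S)=\RRR^*$ via the dual of Lemma~\ref{ampleL*} is exactly as in the paper.
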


\begin{proof}
We first consider the forward implication.
Let $S$ be bi-unary subsemigroup of an inverse semigroup $Q$ such that $Q$ is a 
semigroup of left I-quotients of $S$. 
We know that $a \, \RRR^Q \, b$ if and only if $a^+ = b^+$
and $a \, \LLL^Q \, b$ if and only if $a^* = b^*$.
By Lemma 2.4 of \cite{ghroda2012semigroups},
we know that $S$ is straight in $Q$.
Therefore, by Theorem \ref{rightample}, Property (A1) is satisfied.
Therefore ($\star$) is satisfied.

For the backward implication,
we aim to apply Theorem \ref{rightample} with $\RRR' = \RRR^*$.
That is, $a \, \RRR' \, b$ if and only if
$a^+ = b^+$. Note that  $\ars$  is a left congruence.
We now prove Properties (A1) - (A3).

\begin{itemize}
\item[(A1)] Satisfied since  ($\star$) is assumed.

\item[(A2)] Let $xa \, \RRR^* \, xb$. This means
\begin{equation*}
(xa)^+ = (xb)^+.
\end{equation*}
We apply the dual of Lemma \ref{right ample properties} (ii) to get
\begin{equation*}
(xa^+)^+ = (xb^+)^+. 
\end{equation*}
Right multiplying this by $x$ gives us
\begin{equation*}
(xa^+)^+x = (xb^+)^+x.
\end{equation*}
We then apply the left ample property to give us 
\begin{equation*}
xa^+ = xb^+.
\end{equation*}
By the definition of $\LLL^*$, we know that $x \, \LLL^* \, x^*$.
Therefore, the above equation implies that
\begin{equation*}
x^*a^+ = x^*b^+.
\end{equation*}
Therefore, applying $^+$ to both sides, we have
\begin{equation*}
(x^*a^+)^+ = (x^*b^+)^+.
\end{equation*}
Then, the dual of Lemma \ref{right ample properties} (ii) gives us
\begin{equation*}
(x^*a)^+ = (x^*b)^+,
\end{equation*}
and so $x^*a \, \RRR^* \, x^*b$.

\item[(A3)] $\RRR' = \RRR^*$.
\end{itemize}

Therefore, Theorem \ref{rightample} gives us that
$S$ has a straight left I-order $Q$ such that
$^*$ is preserved
and $\RRR^Q \cap (S \times S) = \RRR^*$.
Therefore, by the dual of Lemma \ref{ampleL*}, $^+$ is also preserved.
\end{proof}

\section*{Acknowledgement and Open Question} The authors would like to thank Mark Kambites for his suggestions which led to Example \ref{nonstraight}. We remark again that many of the left I-orders that naturally occur are straight. We end by posing the following question: is every left I-order in a proper inverse semigroup straight?
 
\bibliographystyle{abbrv}
\bibliography{references.bib}{}

\begin{thebibliography}{10}

\bibitem{cherubini1987inverse}
A.~Cherubini and M.~Petrich.
\newblock The inverse hull of right cancellative semigroups.
\newblock {\em Journal of Algebra}, 111:74--113, 1987.

\bibitem{clifford1953class}
A.~Clifford.
\newblock A class of d-simple semigroups.
\newblock {\em American Journal of Mathematics}, 75:547--556, 1953.

\bibitem{clifford1967algebraic}
A.~H. Clifford and G.~B. Preston.
\newblock {\em The algebraic theory of semigroups, Vols. I and II}.
\newblock American Mathematical Soc., 1967.

\bibitem{dubreil1943problemes}
P.~Dubreil.
\newblock Sur les probl\`{e}mes d’immersion et la th{\'e}orie des modules.
\newblock {\em CR Acad. Sci. Paris}, 216:625--627, 1943.

\bibitem{exel2018representations}
R.~Exel and B.~Steinberg.
\newblock Representations of the inverse hull of a 0-left cancellative
  semigroup.
\newblock {\em arXiv preprint arXiv:1802.06281}, 2018.

\bibitem{fountain2009graph}
J.~Fountain and M.~Kambites.
\newblock Graph products of right cancellative monoids.
\newblock {\em Journal of the Australian Mathematical Society}, 87:227--252,
  2009.

\bibitem{fountain1985brandt}
J.~Fountain and M.~Petrich.
\newblock Brandt semigroups of quotients.
\newblock {\em Math. Comb. Phil. Soc.}, 98:413--426, 1985.

\bibitem{fountain1986completely}
J.~Fountain and M.~Petrich.
\newblock Completely 0-simple semigroups of quotients.
\newblock {\em Journal of Algebra}, 101:365--402, 1986.

\bibitem{ghroda2010bisimple}
N.~Ghroda.
\newblock Bisimple inverse {$\omega$}-semigroups of left {I}-quotients.
\newblock {\em arXiv preprint arXiv:1008.3241}, 2010.

\bibitem{ghroda2010inverse}
N.~Ghroda and V.~Gould.
\newblock Inverse semigroups of left {I}-quotients.
\newblock {\em arXiv preprint arXiv:1003.3640}, 2010.

\bibitem{ghroda2012semigroups}
N.~Ghroda and V.~Gould.
\newblock Semigroups of inverse quotients.
\newblock {\em Periodica Mathematica Hungarica}, 65:45--73, 2012.

\bibitem{gould1986bisimple}
V.~Gould.
\newblock Bisimple inverse {$\omega$}-semigroups of left quotients.
\newblock {\em Proceedings of the London Mathematical Society}, 3:95--118,
  1986.

\bibitem{gould1986clifford}
V.~Gould.
\newblock Clifford semigroups of left quotients.
\newblock {\em Glasgow Mathematical Journal}, 28(2):181--191, 1986.

\bibitem{gould2003semigroups}
V.~Gould.
\newblock Semigroups of left quotients: existence, straightness and locality.
\newblock {\em Journal of Algebra}, 267:514--541, 2003.

\bibitem{gould2005faithful}
V.~Gould and M.~Kambites.
\newblock Faithful functors from cancellative categories to cancellative
  monoids with an application to abundant semigroups.
\newblock {\em International Journal of Algebra and Computation}, 15:683--698,
  2005.

\bibitem{howie1995fundamentals}
J.~M. Howie.
\newblock {\em Fundamentals of semigroup theory}.
\newblock Oxford University Press, 1995.

\bibitem{malcev1939embedding}
A.~Malcev.
\newblock On the embedding of associative systems into groups. i.
\newblock {\em Mat. Sb}, 6:331--336, 1939.

\bibitem{mcalister1976one}
D.~McAlister.
\newblock One-to-one partial right translations of a right cancellative
  semigroup.
\newblock {\em Journal of Algebra}, 43:231--251, 1976.

\bibitem{murata1950quotient}
K.~Murata et~al.
\newblock On the quotient semi-group of a non-commutative semi-group.
\newblock {\em Osaka Mathematical Journal}, 2:1--5, 1950.

\bibitem{nivat1970generalisation}
M.~Nivat and J.-F. Perrot.
\newblock Une g{\'e}n{\'e}ralisation du mono\"{i}de bicyclique.
\newblock {\em Comptes Rendus de l’Acad{\'e}mie des Sciences de Paris},
  271:824--827, 1970.

\bibitem{ore1931linear}
O.~Ore.
\newblock Linear equations in non-commutative fields.
\newblock {\em Annals of Mathematics}, 32:463--477, 1931.

\bibitem{schein1996subsemigroups}
B.~M. Schein.
\newblock Subsemigroups of inverse semigroups.
\newblock {\em Le Matematiche}, 51(3):205--227, 1996.

\bibitem{schneiderthesis}
G.~Schneider.
\newblock {\em Semigroups of straight I-quotients: a general approach}.
\newblock PhD thesis, University of York, 2021.

\end{thebibliography}

\end{document}